\let\mathcaltmp\mathcal
\let\mathcal\mathscr
\let\mathscr\mathcaltmp
\newcommand{\eqnum}{\refstepcounter{equation}\textup{\tagform@{\theequation}}}
\makeatletter \@addtoreset{equation}{section} \makeatother
\renewcommand{\theequation}{\thesection.\arabic{equation}}
\newtheorem{thm}[equation]{Theorem}
\newtheorem{thmX}{Theorem}
\newtheorem{corX}[thmX]{Corollary}
\newtheorem*{thm*}{Theorem}
\newtheorem{lem}[equation]{Lemma}
\newtheorem{cor}[equation]{Corollary}
\newtheorem{prop}[equation]{Proposition}
\newtheorem*{defthm*}{Definition/Theorem}
\theoremstyle{definition}
\newtheorem{defn}[equation]{Definition}
\newtheorem{rem}[equation]{Remark}
\newtheorem{constr}[equation]{Construction}
\newtheorem{varnt}[equation]{Variant}
\newtheorem*{exam*}{Example}
\newcommand\arXiv[1]{\href{http://arxiv.org/abs/#1}{arXiv:#1}}
\newcommand{\nc}{\newcommand}
\nc{\renc}{\renewcommand}
\nc{\ssec}{\subsection}
\nc{\sssec}{\subsubsection}
\nc{\on}{\operatorname}
\nc{\term}[1]{#1\xspace}
\tikzset{
  commutative diagrams/.cd,
  arrow style=tikz,
  diagrams={>=latex}}
\tikzset{
  column sep/.code=\def\pgfmatrixcolumnsep{\pgf@matrix@xscale*(#1)},
  row sep/.code   =\def\pgfmatrixrowsep{\pgf@matrix@yscale*(#1)},
  matrix xscale/.code=%
    \pgfmathsetmacro\pgf@matrix@xscale{\pgf@matrix@xscale*(#1)},
  matrix yscale/.code=%
    \pgfmathsetmacro\pgf@matrix@yscale{\pgf@matrix@yscale*(#1)},
  matrix scale/.style={/tikz/matrix xscale={#1},/tikz/matrix yscale={#1}}}
\def\pgf@matrix@xscale{1}
\def\pgf@matrix@yscale{1}
\setlist[enumerate,1]{label={(\alph*)},itemsep=\parskip}
\newlist{enumcompress}{enumerate}{1}
\setlist[enumcompress,1]{
  label={(\alph*)},
  itemsep=0.3\parskip,
  leftmargin=*,
  align=left,
  topsep=0em
}
\newlist{thmlist}{enumerate}{1}
\setlist[thmlist,1]{
  label={\em(\roman*)}, ref={(\roman*)},
  itemsep=0.5em,
  leftmargin=*,
  align=right,widest=vi)}
\newlist{thmlistbis}{enumerate}{1}
\setlist[thmlistbis,1]{
  label={\em(\roman*~\textit{bis})},
  ref={(\roman*}~\textit{bis}\upshape{)},
  itemsep=0.5em,
  leftmargin=0pt, align=right, widest=vi)}
\newlist{defnlist}{enumerate}{2}
\setlist[defnlist,1]{
  label={(\roman*)}, ref={(\roman*)},
  itemsep=0.5em,
  leftmargin=0em,
  align=right, widest=vi)}
\setlist[defnlist,2]{
  label={(\alph*)}, ref={(\alph*)},
  itemsep=0.75em,
  labelsep=0em,labelindent=0em,leftmargin=*,align=left,widest=vi),
  topsep=0.75em}
\newlist{inlinelist}{enumerate*}{1}
\setlist[inlinelist,1]{label={(\alph*)}}
\newlist{inlinedefnlist}{enumerate*}{1}
\definecolor{green}{HTML}{38550C}
\setlist[inlinedefnlist,1]{label={\color{green}(\roman*)}}
\nc{\cA}{\ensuremath{\mathcal{A}}\xspace}
\nc{\cB}{\ensuremath{\mathcal{B}}\xspace}
\nc{\cC}{\ensuremath{\mathcal{C}}\xspace}
\nc{\cD}{\ensuremath{\mathcal{D}}\xspace}
\nc{\cE}{\ensuremath{\mathcal{E}}\xspace}
\nc{\cF}{\ensuremath{\mathcal{F}}\xspace}
\nc{\cG}{\ensuremath{\mathcal{G}}\xspace}
\nc{\cH}{\ensuremath{\mathcal{H}}\xspace}
\nc{\cI}{\ensuremath{\mathcal{I}}\xspace}
\nc{\cJ}{\ensuremath{\mathcal{J}}\xspace}
\nc{\cK}{\ensuremath{\mathcal{K}}\xspace}
\nc{\cL}{\ensuremath{\mathcal{L}}\xspace}
\nc{\cM}{\ensuremath{\mathcal{M}}\xspace}
\nc{\cN}{\ensuremath{\mathcal{N}}\xspace}
\nc{\cO}{\ensuremath{\mathcal{O}}\xspace}
\nc{\cP}{\ensuremath{\mathcal{P}}\xspace}
\nc{\cQ}{\ensuremath{\mathcal{Q}}\xspace}
\nc{\cR}{\ensuremath{\mathcal{R}}\xspace}
\nc{\cS}{\ensuremath{\mathcal{S}}\xspace}
\nc{\cT}{\ensuremath{\mathcal{T}}\xspace}
\nc{\cU}{\ensuremath{\mathcal{U}}\xspace}
\nc{\cV}{\ensuremath{\mathcal{V}}\xspace}
\nc{\cW}{\ensuremath{\mathcal{W}}\xspace}
\nc{\cX}{\ensuremath{\mathcal{X}}\xspace}
\nc{\cY}{\ensuremath{\mathcal{Y}}\xspace}
\nc{\cZ}{\ensuremath{\mathcal{Z}}\xspace}
\nc{\sA}{\ensuremath{\mathscr{A}}\xspace}
\nc{\sB}{\ensuremath{\mathscr{B}}\xspace}
\nc{\sC}{\ensuremath{\mathscr{C}}\xspace}
\nc{\sD}{\ensuremath{\mathscr{D}}\xspace}
\nc{\sE}{\ensuremath{\mathscr{E}}\xspace}
\nc{\sF}{\ensuremath{\mathscr{F}}\xspace}
\nc{\sG}{\ensuremath{\mathscr{G}}\xspace}
\nc{\sH}{\ensuremath{\mathscr{H}}\xspace}
\nc{\sI}{\ensuremath{\mathscr{I}}\xspace}
\nc{\sJ}{\ensuremath{\mathscr{J}}\xspace}
\nc{\sK}{\ensuremath{\mathscr{K}}\xspace}
\nc{\sL}{\ensuremath{\mathscr{L}}\xspace}
\nc{\sM}{\ensuremath{\mathscr{M}}\xspace}
\nc{\sN}{\ensuremath{\mathscr{N}}\xspace}
\nc{\sO}{\ensuremath{\mathscr{O}}\xspace}
\nc{\sP}{\ensuremath{\mathscr{P}}\xspace}
\nc{\sQ}{\ensuremath{\mathscr{Q}}\xspace}
\nc{\sR}{\ensuremath{\mathscr{R}}\xspace}
\nc{\sS}{\ensuremath{\mathscr{S}}\xspace}
\nc{\sT}{\ensuremath{\mathscr{T}}\xspace}
\nc{\sU}{\ensuremath{\mathscr{U}}\xspace}
\nc{\sV}{\ensuremath{\mathscr{V}}\xspace}
\nc{\sW}{\ensuremath{\mathscr{W}}\xspace}
\nc{\sX}{\ensuremath{\mathscr{X}}\xspace}
\nc{\sY}{\ensuremath{\mathscr{Y}}\xspace}
\nc{\sZ}{\ensuremath{\mathscr{Z}}\xspace}
\nc{\bA}{\ensuremath{\mathbf{A}}\xspace}
\nc{\bB}{\ensuremath{\mathbf{B}}\xspace}
\nc{\bC}{\ensuremath{\mathbf{C}}\xspace}
\nc{\bD}{\ensuremath{\mathbf{D}}\xspace}
\nc{\bE}{\ensuremath{\mathbf{E}}\xspace}
\nc{\bF}{\ensuremath{\mathbf{F}}\xspace}
\nc{\bG}{\ensuremath{\mathbf{G}}\xspace}
\nc{\bH}{\ensuremath{\mathbf{H}}\xspace}
\nc{\bI}{\ensuremath{\mathbf{I}}\xspace}
\nc{\bJ}{\ensuremath{\mathbf{J}}\xspace}
\nc{\bK}{\ensuremath{\mathbf{K}}\xspace}
\nc{\bL}{\ensuremath{\mathbf{L}}\xspace}
\nc{\bM}{\ensuremath{\mathbf{M}}\xspace}
\nc{\bN}{\ensuremath{\mathbf{N}}\xspace}
\nc{\bO}{\ensuremath{\mathbf{O}}\xspace}
\nc{\bP}{\ensuremath{\mathbf{P}}\xspace}
\nc{\bQ}{\ensuremath{\mathbf{Q}}\xspace}
\nc{\bR}{\ensuremath{\mathbf{R}}\xspace}
\nc{\bS}{\ensuremath{\mathbf{S}}\xspace}
\nc{\bT}{\ensuremath{\mathbf{T}}\xspace}
\nc{\bU}{\ensuremath{\mathbf{U}}\xspace}
\nc{\bV}{\ensuremath{\mathbf{V}}\xspace}
\nc{\bW}{\ensuremath{\mathbf{W}}\xspace}
\nc{\bX}{\ensuremath{\mathbf{X}}\xspace}
\nc{\bY}{\ensuremath{\mathbf{Y}}\xspace}
\nc{\bZ}{\ensuremath{\mathbf{Z}}\xspace}
\nc{\bbA}{\ensuremath{\mathbb{A}}\xspace}
\nc{\bbB}{\ensuremath{\mathbb{B}}\xspace}
\nc{\bbC}{\ensuremath{\mathbb{C}}\xspace}
\nc{\bbD}{\ensuremath{\mathbb{D}}\xspace}
\nc{\bbE}{\ensuremath{\mathbb{E}}\xspace}
\nc{\bbF}{\ensuremath{\mathbb{F}}\xspace}
\nc{\bbG}{\ensuremath{\mathbb{G}}\xspace}
\nc{\bbH}{\ensuremath{\mathbb{H}}\xspace}
\nc{\bbI}{\ensuremath{\mathbb{I}}\xspace}
\nc{\bbJ}{\ensuremath{\mathbb{J}}\xspace}
\nc{\bbK}{\ensuremath{\mathbb{K}}\xspace}
\nc{\bbL}{\ensuremath{\mathbb{L}}\xspace}
\nc{\bbM}{\ensuremath{\mathbb{M}}\xspace}
\nc{\bbN}{\ensuremath{\mathbb{N}}\xspace}
\nc{\bbO}{\ensuremath{\mathbb{O}}\xspace}
\nc{\bbP}{\ensuremath{\mathbb{P}}\xspace}
\nc{\bbQ}{\ensuremath{\mathbb{Q}}\xspace}
\nc{\bbR}{\ensuremath{\mathbb{R}}\xspace}
\nc{\bbS}{\ensuremath{\mathbb{S}}\xspace}
\nc{\bbT}{\ensuremath{\mathbb{T}}\xspace}
\nc{\bbU}{\ensuremath{\mathbb{U}}\xspace}
\nc{\bbV}{\ensuremath{\mathbb{V}}\xspace}
\nc{\bbW}{\ensuremath{\mathbb{W}}\xspace}
\nc{\bbX}{\ensuremath{\mathbb{X}}\xspace}
\nc{\bbY}{\ensuremath{\mathbb{Y}}\xspace}
\nc{\bbZ}{\ensuremath{\mathbb{Z}}\xspace}
\nc{\mrm}[1]{\ensuremath{\mathrm{#1}}\xspace}
\nc{\mfr}[1]{\ensuremath{\mathfrak{#1}}\xspace}
\nc{\mit}[1]{\ensuremath{\mathit{#1}}\xspace}
\nc{\mbf}[1]{\ensuremath{\mathbf{#1}}\xspace}
\nc{\mcal}[1]{\ensuremath{\mathcal{#1}}\xspace}
\nc{\msc}[1]{\ensuremath{\mathscr{#1}}\xspace}
\nc{\sub}{\subseteq}
\nc{\too}{\longrightarrow}
\nc{\hook}{\hookrightarrow}
\nc{\hooklongrightarrow}{\lhook\joinrel\longrightarrow}
\nc{\hooklong}{\hooklongrightarrow}
\nc{\hooklongleftarrow}{\longleftarrow\joinrel\rhook}
\nc{\twoheadlongrightarrow}{\relbar\joinrel\twoheadrightarrow}
\nc{\longrightleftarrows}{\ \raisebox{0.3ex}{\(\mathrel{\substack{\xrightarrow{\rule{1em}{0em}} \\[-1ex] \xleftarrow{\rule{1em}{0em}}}}\)}\ }
\renc{\ge}{\geqslant}
\renc{\geq}{\geqslant}
\renc{\le}{\leqslant}
\renc{\leq}{\leqslant}
\nc{\id}{\mathrm{id}}
\DeclareMathOperator{\rk}{\mathrm{rk}}
\DeclareMathOperator{\Hom}{\on{Hom}}
\nc{\uHom}{\underline{\smash{\Hom}}}
\DeclareMathOperator{\Maps}{\on{Maps}}
\DeclareMathOperator{\Aut}{\on{Aut}}
\DeclareMathOperator{\End}{\on{End}}
\nc{\uEnd}{\underline{\smash{\End}}}
\nc{\colim}{\varinjlim}
\renc{\lim}{\varprojlim}
\nc{\Cofib}{\on{Cofib}}
\nc{\Fib}{\on{Fib}}
\nc{\initial}{\varnothing}
\nc{\op}{\mathrm{op}}
\DeclareMathOperator*{\fibprod}{\times}
\renc{\setminus}{\smallsetminus}
\newcommand{\thmref}[1]{Theorem~\ref{#1}}
\newcommand{\secref}[1]{Sect.~\ref{#1}}
\newcommand{\ssecref}[1]{Subsect. ~\ref{#1}}
\newcommand{\sssecref}[1]{(\ref{#1})}
\newcommand{\lemref}[1]{Lemma~\ref{#1}}
\newcommand{\propref}[1]{Proposition~\ref{#1}}
\newcommand{\corref}[1]{Corollary~\ref{#1}}
\newcommand{\remref}[1]{Remark~\ref{#1}}
\newcommand{\defnref}[1]{Definition~\ref{#1}}
\renewcommand{\eqref}[1]{(\ref{#1})}
\newcommand{\constrref}[1]{Construction~\ref{#1}}
\newcommand{\varntref}[1]{Variant~\ref{#1}}
\newcommand{\itemref}[1]{\ref{#1}}
\nc{\A}{\bA}
\renc{\P}{\bP}
\nc{\Spec}{\on{Spec}}
\nc{\D}{\on{\mbf{D}}}
\nc{\Dqc}{\on{\mbf{D}}_{\mrm{qc}}}
\nc{\bDelta}{\mathbf{\Delta}}
\nc{\Cech}{\textnormal{\v{C}}}
\nc{\Dperf}{\on{\mbf{D}}_{\mrm{perf}}}
\nc{\Coh}{\on{Coh}}
\nc{\Qcoh}{\on{Qcoh}}
\nc{\Dcoh}{\on{\mbf{D}}_{\mrm{coh}}}
\nc{\uCoh}{\underline{\smash{\Coh}}}
\nc{\cl}{{\mrm{cl}}}
\nc{\Bl}{\on{Bl}}
\nc{\vir}{\mrm{vir}}
\nc{\CH}{\on{CH}}
\nc{\Nis}{\mrm{Nis}}
\nc{\et}{\mrm{\acute{e}t}}
\nc{\oH}{\on{H}}
\nc{\BM}{\mrm{BM}}
\nc{\Z}{\bZ}
\nc{\Q}{\bQ}
\nc{\K}{{\on{K}}}
\nc{\KGL}{\mrm{KGL}}
\nc{\MGL}{\mrm{MGL}}
\nc{\HZ}{\mrm{H}\bZ}
\nc{\KB}{\K^{\mrm{B}}}
\nc{\G}{{\on{G}}}
\nc{\KH}{\mrm{KH}}
\nc{\Ket}{\K^{\et}}
\nc{\KHet}{\KH^{\et}}
\nc{\Get}{\G^{\et}}
\nc{\Einfty}{{\sE_\infty}}
\renc{\sp}{\mrm{sp}}
\nc{\cosp}{\mrm{cosp}}
\nc{\Td}{\on{Td}}
\nc{\ch}{\on{ch}}
\nc{\RGamma}{R\Gamma}
\nc{\red}{\mrm{red}}
\nc{\der}{{\mrm{der}}}
\nc{\Mod}{{\mrm{Mod}}}
\nc{\Gr}{{\on{Gr}}}
\nc{\Ind}{\on{Ind}}
\nc{\form}{\widehat}
\nc{\R}{\bR}
\renc{\L}{\bL}
\nc{\otimesL}{\mathchoice{\overset{\bL}{\otimes}}{\otimes^\bL}{\otimes^\bL}{\otimes^\bL}}
\nc{\fibprodR}{\fibprod^R}
\nc{\uRHom}{\bR\uHom}
\nc{\GL}{\mrm{GL}}
\nc{\SL}{\mrm{SL}}
\nc{\SW}{\on{SW}}
\nc{\Vect}{\on{Vect}}
\nc{\Fun}{\on{Fun}}
\nc{\vb}[1]{\langle #1\rangle}
\nc{\loc}{\mrm{loc}}
\nc{\fix}{\mrm{fix}}
\nc{\mov}{\mrm{mov}}
\nc{\cms}{\mrm{cms}}
\nc{\V}{\bV}
\nc{\Gm}{{\bG_m}}
\nc{\pt}{\mrm{pt}}
\nc{\mot}{\mrm{mot}}
\nc{\an}{\mrm{an}}
\nc{\St}{\mrm{St}}
\nc{\pr}{\mrm{pr}}
\nc{\C}{\on{C}}
\nc{\Chom}{\mrm{C}_\bullet}
\nc{\Ccoh}{\mrm{C}^\bullet}
\nc{\CBM}{\mrm{C}^{\BM}_\bullet}
\nc{\uAut}{\underline{\Aut}}
\nc{\per}{\vb{\ast}} 
\nc{\Lis}{\mrm{Lis}}
\nc{\aff}{{\mrm{aff}}}
\nc{\dR}{{\mrm{dR}}}
\nc{\Pic}{{\on{Pic}}}
\nc{\uGrp}{\underline{\smash{\mrm{Grp}}}}
\nc{\uPerf}{\underline{\smash{\mrm{Perf}}}}
\nc{\uPic}{\underline{\smash{\Pic}}}
\nc{\uMap}{\underline{\smash{\mrm{Map}}}}
\nc{\uDiv}{\underline{\smash{\mrm{Div}}}}
\nc{\uPair}{\underline{\smash{\mrm{Pair}}}}
\nc{\dash}{\textnormal{-}}
\nc{\nilp}{\mrm{nilp}}
\nc{\Rep}{\on{R}}
\nc{\dashmod}{\dash\mrm{mod}}
\nc{\bLambda}{\mbf{\Lambda}}
\nc{\un}{\mbf{1}}
\nc{\Spt}{\mrm{Spt}}
\nc{\Stk}{\mrm{Stk}}
\nc{\tr}{\mrm{tr}}
\nc{\eul}{\mrm{eul}}
\nc{\gys}{\mrm{gys}}
\nc{\inv}{[\Sigma^{-1}]}
\nc{\Corr}{\on{Corr}}
\nc{\Catoo}{\mrm{Cat}_{\infty}}
\nc{\CatooL}{\mrm{Cat}_{\infty}^{\mrm{L}}}
\nc{\SH}{\on{SH}}
\nc{\MGLmod}{\on{D_\MGL}}
\nc{\KGLmod}{\on{D_\KGL}}
\nc{\lisse}{{\triangleleft}}
\nc{\gen}{{\mrm{gen}}}
\nc{\scr}{\term{derived commutative ring}}
\nc{\scrs}{\term{derived commutative rings}}
\nc{\inftyCat}{\term{$\infty$-category}}
\nc{\inftyCats}{\term{$\infty$-categories}}
\nc{\inftyGrpd}{\term{$\infty$-groupoid}}
\nc{\inftyGrpds}{\term{$\infty$-groupoids}}
\nc{\dA}{\term{derived Artin}}
\title{Cohomological and categorical concentration\vspace{-2mm}}
\author[A.\,A. Khan]{Adeel A. Khan}
\author[C. Ravi]{Charanya Ravi}
\date{2023-08-02}
\def\l@subsection{\@tocline{2}{0pt}{4pc}{6pc}{}}
\def\l@subsubsection{\@tocline{3}{0pt}{8pc}{8pc}{}}
\begin{document}

\begin{abstract}
  Given a torus action on a compact space $X$, a fundamental result of Borel and Atiyah--Segal asserts that the equivariant cohomology of $X$ is concentrated in the fixed locus $X^T$, up to inverting enough Chern classes.
  We prove an analogue for algebraic varieties over an arbitrary field.
  In fact, we deduce this from a categorification at the level of equivariant derived categories and even equivariant stable motivic homotopy categories, which also gives concentration at the level of Voevodsky motives and for homotopy K-theory.
  \vspace{-5mm}
\end{abstract}

\maketitle

\renewcommand\contentsname{\vspace{-1cm}}
\tableofcontents

\setlength{\parindent}{0em}
\parskip 0.75em

\thispagestyle{empty}



\section*{Introduction}

\ssec{Cohomological concentration}
  
  The starting point for this paper is the following fundamental result in equivariant cohomology (see \cite[\S 3.2, Thm.~III.1]{Hsiang}):

  \begin{thm}[Borel, Atiyah--Segal]\label{thm:intro/contop}\leavevmode
    Let $X$ be a compact topological space with an action of a compact Lie group $G$.
    Let $\Sigma$ be a set of nontrivial rank one $G$-representations and let $Z \sub X$ be a $G$-invariant closed subspace containing every point $x\in X$ such that for every $\rho\in\Sigma$ the restriction $\rho|_{G_x}$ is nontrivial (where $G_x$ denotes the stabilizer of the $G$-action at $x$).
    Then the inclusion $i : Z \hook X$ induces an isomorphism
    \[
      i^* : \oH^*_G(X)[\Sigma^{-1}] \to \oH^*_G(Z)[\Sigma^{-1}]
    \]
    where $\Sigma$ acts via multiplication by the first Chern class\footnote{%
      See e.g. \cite[Appendix]{AtiyahChar} for the definition of $c_1(\rho)$.
    }.
  \end{thm}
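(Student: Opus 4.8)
The plan is to deduce the isomorphism from the vanishing, after inverting $\Sigma$, of the relative equivariant cohomology $\oH^*_G(X,Z)$. Since $\oH^*_G(\pt)=\oH^*(BG)$ is graded-commutative and the classes $c_1(\rho)$ sit in even degree, localizing at the multiplicative set they generate is an exact functor on $\oH^*_G(\pt)$-modules; applying it to the long exact sequence of the pair $(X,Z)$ reduces us to showing $\oH^*_G(X,Z)[\Sigma^{-1}]=0$. The mechanism for this vanishing is a local principle: a point $x\in X\setminus Z$ has, by the very definition of $Z$, a stabilizer $G_x$ contained in $\ker\rho$ for some $\rho=\rho_x\in\Sigma$, and on the equivariant cohomology of any $G$-neighbourhood of the orbit $G\cdot x$ the class $c_1(\rho_x)$ acts by zero, so inverting it annihilates that neighbourhood's contribution.

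Concretely, using compactness of $X$ together with the slice theorem for the compact Lie group $G$ (finitely many orbit types; $G$-tubular neighbourhoods $G\times_{G_x}V$ of orbits), the plan is to reduce to the case where $(X,Z)$ is a finite $G$-CW pair, so that the cells of $X$ outside $Z$ have the form $G/H\times D^n$ with $H$ a stabilizer occurring in $X\setminus Z$. For such a cell, $G$-homotopy invariance and the suspension isomorphism give
\[
  \oH^*_G\bigl(G/H\times D^n,\, G/H\times\partial D^n\bigr)\;\cong\;\widetilde{\oH}{}^*_G\bigl((G/H)_+\wedge S^n\bigr)\;\cong\;\oH^{*-n}_G(G/H)\;\cong\;\oH^{*-n}(BH),
\]
and under this identification the $\oH^*(BG)$-module structure is the one induced by restriction $\oH^*(BG)\to\oH^*(BH)$ along $BH\to BG$. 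Since $H\sub\ker\rho_x$, the character $\rho_x|_H$ is trivial, hence $c_1(\rho_x)$ maps to $c_1(\rho_x|_H)=0$ in $\oH^2(BH)$; thus $c_1(\rho_x)$ annihilates this relative group, and in particular $\oH^*_G(G/H\times D^n,\, G/H\times\partial D^n)[\Sigma^{-1}]=0$.

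To globalize, order the cells of $X$ outside $Z$ by dimension, producing a filtration $Z=K_0\sub K_1\sub\dots\sub K_N=X$ by $G$-subcomplexes whose successive quotients are single cells as above, and induct on $i$: localizing the long exact sequence of the triple $(K_i,K_{i-1},Z)$, the outer terms $\oH^*_G(K_i,K_{i-1})[\Sigma^{-1}]$ and $\oH^*_G(K_{i-1},Z)[\Sigma^{-1}]$ vanish — the first by the cell computation, the second by induction, with base case $\oH^*_G(K_0,Z)=\oH^*_G(Z,Z)=0$ — which forces $\oH^*_G(K_i,Z)[\Sigma^{-1}]=0$. For $i=N$ this is the desired vanishing, and the long exact sequence of $(X,Z)$ then makes $i^*$ an isomorphism after inverting $\Sigma$.

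The step I expect to be most delicate is the passage from a general compact $G$-space to a finite $G$-CW pair: such an equivalence need not exist in full generality, so one either restricts to a class of spaces where the slice theorem and the orbit-type stratification are well behaved (locally smooth $G$-spaces, $G$-CW complexes, $G$-ANRs), or replaces the cell induction by a \v{C}ech--Mayer--Vietoris argument over a finite $G$-invariant cover of $X$ by tubes $G\times_{G_x}V$, checking that every finite intersection of such tubes again has all stabilizers contained in some $\ker\rho$ (which holds because each $\ker\rho$ is normal, hence conjugation-invariant) and that localization commutes with the spectral sequence of the cover and with any $\varprojlim^1$-terms. A minor additional point is the compatibility of $c_1$ with restriction of representations and its normalization; and one may, if convenient, first reduce to the case of a single representation by inverting the Chern classes successively, since the iterated conditions "$\rho_j|_{G_x}$ nontrivial" cut out a closed $G$-invariant subset contained in $Z$.
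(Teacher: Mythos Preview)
The paper does not actually prove \thmref{thm:intro/contop}: it is quoted as a classical result with a reference to Hsiang \cite[\S 3.2, Thm.~III.1]{Hsiang}, and serves only as motivation for the algebraic analogues (Theorems~A--C). So there is no ``paper's own proof'' to compare against for this particular statement.

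Your argument is the standard topological one and is essentially correct: reduce to showing $\oH^*_G(X,Z)[\Sigma^{-1}]=0$, then filter by $G$-cells and use that $c_1(\rho)$ restricts to zero in $\oH^*(BH)$ whenever $H\subseteq\ker\rho$. You have correctly identified the only genuinely delicate point, namely the existence of a finite $G$-CW structure (or an adequate substitute via tubes and Mayer--Vietoris), and your discussion of it is accurate.

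It is perhaps worth noting that the paper's proof of the \emph{algebraic} analogue (\thmref{thm:acyclic}, leading to \corref{cor:concoh}) follows a rather different route that does not translate back to a cell-by-cell argument: instead of decomposing the complement, one produces a single $\Sigma$-local equivalence $\un_X\per\to\un_X\per$ which is null-homotopic, by reducing via affine and projective bundles to a torus quotient, applying Thomason's generic slice theorem to split off a $BT'$ factor, and then invoking nilpotence of $c_1$ on schemes. This buys uniformity across the various sheaf-theoretic weaves (motivic, \'etale, genuine $\KGL$) where no cellular decomposition is available; your approach, by contrast, is more elementary and self-contained in the topological setting but relies on the slice theorem / $G$-CW structure, which has no direct algebraic counterpart.
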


  Here $\oH^*_G(-)$ denotes $G$-equivariant singular cohomology, defined via the Borel construction.
  One of our goals in this paper is to prove a variant of \thmref{thm:intro/contop} in algebraic geometry.
  Let $k$ be a field and let $\oH^*_G(-)$ denote one of:
  \begin{enumcompress}[label={(\roman*)}]
    \item $G$-equivariant Betti cohomology $\oH^*_G(X(\bC); \Z)$ (if $k=\bC$),
    \item $G$-equivariant étale cohomology $\oH^*_{G}(X_\et; \Z/n\Z)$ (if $n$ is invertible in $k$),
    \item $G$-equivariant $\ell$-adic cohomology $\oH^*_{G}(X_\et; \Z_\ell)$ (if $\ell$ is invertible in $k$).
  \end{enumcompress}

  Then we have (see \corref{cor:concoh}):

  \begin{thmX}[Cohomological concentration]\label{thm:intro/concoh}
    Let $X$ be an algebraic space of finite type over $k$ with an action of an algebraic group $G$.
    Let $\Sigma$ be a set of nontrivial rank one $G$-representations and let $Z \sub X$ be a $G$-invariant closed subspace containing every geometric point $x$ of $X$ for which the restrictions $\rho|_{G_x}$ are nontrivial for all $\rho\in\Sigma$.
    Then the inclusion $i : Z \hook X$ induces an isomorphism
    \begin{equation}
      i^* : \oH^*_G(X)[\Sigma^{-1}] \to \oH^*_G(Z)[\Sigma^{-1}]
    \end{equation}
    where $\Sigma$ acts via multiplication by the first Chern class.
  \end{thmX}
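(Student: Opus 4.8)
The plan is to deduce \thmref{thm:intro/concoh} from a \emph{categorical} concentration statement, as the abstract announces. Concretely, I would prove that for the open complement $U := X \setminus Z$ one has $\SH_G(U)[\Sigma^{-1}] \simeq 0$, where $\SH_G(-)$ is the $G$-equivariant stable motivic homotopy category (equivalently $\SH$ of the quotient stack $[-/G]$) and $[\Sigma^{-1}]$ inverts the Euler classes $e(\rho) = c_1(\rho)$, $\rho \in \Sigma$. By the localization recollement for $[U/G] \hookrightarrow [X/G] \hookleftarrow [Z/G]$, this vanishing is equivalent to $i^* \colon \SH_G(X)[\Sigma^{-1}] \to \SH_G(Z)[\Sigma^{-1}]$ being an equivalence (equivalently, $i_*$ is). Étale, $\ell$-adic, and (for $k=\bC$) Betti cohomology are all obtained from $\SH_G$ by exact monoidal realization functors followed by $\RGamma$ of the structure map to a point, and exact monoidal functors commute with inverting $e(\rho)$; so the categorical statement formally yields $i^*\colon \oH^*_G(X)[\Sigma^{-1}] \to \oH^*_G(Z)[\Sigma^{-1}]$ an isomorphism, \emph{provided} $\RGamma$ of the relevant quotient stacks commutes with the filtered colimit defining $[\Sigma^{-1}]$.

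To prove $\SH_G(U)[\Sigma^{-1}] \simeq 0$: by hypothesis every geometric point $u$ of $U$ satisfies $G_u \subseteq \ker(\rho)$ for some $\rho \in \Sigma$. The $G$-invariant locus where the stabilizer group scheme of $U$ is contained in $U \times \ker(\rho)$ is constructible (spreading-out/generic flatness), so $U$ admits a finite $G$-invariant stratification $U = \bigsqcup_\alpha U_\alpha$ on which all stabilizers lie in a single $\ker(\rho_\alpha)$, $\rho_\alpha \in \Sigma$. The localization triangles $j_!j^* \to \id \to i_*i^*$ of these strata survive $[\Sigma^{-1}]$, and a category glued via finitely many recollements from pieces that each become $0$ also becomes $0$; so I may assume all stabilizers of $U$ lie in $H := \ker(\rho)$ for one fixed $\rho$, and must show $\un_U[e(\rho)^{-1}] \simeq 0$ in $\SH_G(U)$. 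A second round of stratification, by residual-gerbe (Luna-slice) type, reduces this to $[U/G]$ being étale-locally of the form $[S/G_u]$ with $G_u \subseteq H$, where $e(\rho)$ pulls back to the first Chern class of the $G_u$-equivariant line bundle given by $\rho|_{G_u}$ — which is trivial. One is then in the ``free $\Gm$-orbit'' situation, where the Gysin triangle of that line bundle forces $\un[e(\rho)^{-1}] \simeq 0$; the base case of a single orbit is literally $\RGamma(BG_u, E)$ with $c_1(\rho)$ acting by zero, so inverting it kills everything.

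I expect two genuine obstacles. The first is the patching: passing from the orbit/residual-gerbe computations to the statement on all of $[U/G]$ requires Luna slices over an arbitrary (possibly imperfect, positive-characteristic) base field, which fail in the classical form; I would instead organize the dévissage purely through the localization triangles, or argue pro-étale-locally, so as never to need an honest slice. The second is the realization step: $\oH^*_G$ of a finite-type quotient stack need not have finite cohomological dimension — already $\oH^*(B\Gm;\Z) = \Z[t]$ is unbounded — so commuting $\RGamma$ past $[\Sigma^{-1}]$ is not automatic. I would settle this by showing, via the six-functor formalism applied to the (proper-type) structure maps, that for $X$ of finite type $\oH^*_G(X)$ is suitably finite (e.g.\ finitely generated as a module) over $\oH^*_G(\pt) = \oH^*(BG)$, so that localizing the graded ring agrees with realizing the categorical localization; for homotopy $\K$-theory this finiteness is classical, which is why that case comes along for free.
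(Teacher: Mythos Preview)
Your overall architecture matches the paper exactly: prove categorical acyclicity $\D([U/G])[\Sigma^{-1}] = 0$, deduce categorical concentration via the localization recollement, then pass to cohomology. The differences are in how the two ``genuine obstacles'' you flag are resolved; the paper \emph{sidesteps} both rather than confronting them, and your proposed resolutions are the weakest parts of the proposal.

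For the d\'evissage toward acyclicity, the paper does not stratify by stabilizer type or invoke Luna slices. Instead it (a) reduces to a global quotient $[X_0/\GL_n]$ via Hall--Rydh, (b) passes from $\GL_n$ to a maximal torus $T$ using that $[X_0/B] \to [X_0/\GL_n]$ is an iterated projective bundle and $[X_0/T] \to [X_0/B]$ is an affine bundle (so $*$-pullback along these is a retract, resp.\ fully faithful --- this is the only place the projective bundle axiom is needed), and (c) applies Thomason's \emph{generic torus slice} theorem to find a nonempty open $[V/T] \simeq W \times BT'$ with $W$ an affine scheme. On this product the line bundle $\cL$ decomposes as $p_1^*\cL' \otimes p_2^*(\cL|_{BT'})$; the second factor is trivial by admissibility, and $c_1(\cL')$ is \emph{nilpotent} because $W$ is a scheme. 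Noetherian induction (via the localization triangle) then propagates the null-homotopy of the resulting $\Sigma$-local self-map $c_1(\cL)^n$ from the open to all of $[X_0/T]$. This works uniformly in any characteristic and needs neither slices nor reductivity. Your fallback of ``organizing the d\'evissage purely through localization triangles'' without slices is exactly right in spirit, but you still need some geometric input to produce a stratum on which $c_1$ is nilpotent; Thomason's theorem supplies that input.

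For the passage to cohomology, the paper does not attempt to commute $\RGamma$ with a filtered colimit, and no finiteness is used. The categorical localization $\D(-)[\Sigma^{-1}]$ is taken as the \emph{plain} Verdier quotient, universal among \emph{exact} functors rather than colimit-preserving ones. The functor $\cF \mapsto \Ccoh(\sX; j_!\cF)\per[\Sigma^{-1}]$ into $\Ccoh(\sX)\per[\Sigma^{-1}]$-modules is exact and, by module-linearity over $\Ccoh(\sX)\per[\Sigma^{-1}]$, already inverts every morphism in the generating set $\cS_\Sigma$; hence it factors through $\D(U)[\Sigma^{-1}]$ by the universal property. If the latter category is zero, the cohomological localization vanishes automatically. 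The paper explicitly notes that the cocontinuous (telescope) localization you have in mind would fail here, precisely because $\un_\sX$ is not compact in $\D([X/G])$. Your proposed fix --- finite generation of $\oH^*_G(X)$ over $\oH^*(BG)$ --- is therefore unnecessary, and in the stated generality (arbitrary affine algebraic $G$, integral coefficients, non-proper $X$) would itself be a substantial theorem.
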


  In fact, we prove a much more general statement about the cohomology of algebraic stacks.
  Each of the cohomology theories above admits a canonical extension to algebraic stacks in such a way that the cohomology of a quotient stack $[X/G]$ is the equivariant cohomology $\oH^*_G(X)$ (see e.g. \cite{Equilisse}), and we have (see \corref{cor:concoh}):

  \begin{thmX}[Cohomological stacky concentration]\label{thm:intro/stackyconcoh}
    Let $\sX$ be an algebraic stack\footnote{%
      Throughout the introduction, all algebraic stacks are assumed to have affine stabilizer groups.
    } of finite type over $k$.
    Let $\Sigma$ be a set of line bundles on $\sX$ and let $\sZ \sub \sX$ be a closed substack containing every geometric point $x$ of $\sX$ for which the restrictions $\cL|_{B\uAut_\sX(x)}$ are nontrivial for all $\cL\in\Sigma$.
    Then the inclusion $i : \sZ \hook \sX$ induces an isomorphism
    \begin{equation}
      i^* : \oH^*(\sX)[\Sigma^{-1}] \to \oH^*(\sZ)[\Sigma^{-1}]
    \end{equation}
    where $\Sigma$ acts via multiplication by the first Chern class.
  \end{thmX}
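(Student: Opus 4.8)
The plan is to prove this by first establishing a \emph{categorical} concentration statement — at the level of the derived/stable $\infty$-category underlying each of the cohomology theories in question — and then deducing the cohomological statement by mapping out of the unit object.

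First I would fix, for the chosen cohomology theory, the underlying six-functor formalism $\sY \mapsto \D(\sY)$ on finite-type algebraic stacks over $k$ with affine stabilizers (the stable $\infty$-category of étale sheaves with the relevant torsion, $\ell$-adic, or Betti coefficients, as in \cite{Equilisse}), together with its first Chern classes $c_1(\cL)$ of line bundles, realized as self-maps of $\un_\sY$ up to Tate twist and shift. Inverting $\Sigma$ at the level of cohomology corresponds to the Bousfield localization $\D(\sY) \to \D(\sY)[\Sigma^{-1}]$ inverting the maps $c_1(\cL)$, $\cL \in \Sigma$. Being a filtered union, over finite subsets of $\Sigma$, of telescopic localizations at a single self-map of $\un_\sY$ (inverting a finite set of Chern classes being the same as inverting their product), this localization is \emph{smashing}; hence it is $\un_\sY$-linear, compatible with every pullback $f^\ast$ (these being monoidal and carrying $\Sigma$ to Chern classes) and, via the projection formula, with the exceptional pushforwards $j_!$ along open immersions and $i_\ast$ along closed immersions. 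Consequently all recollement triangles descend to the localized categories. Writing $j : \sU := \sX \setminus \sZ \hook \sX$ for the open complement, the hypothesis on $\sZ$ translates into: every geometric point $x$ of $\sU$ carries some $\cL \in \Sigma$ whose restriction to the residual gerbe $\cG_x = B\uAut_\sX(x)$ is trivial.

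The crux is the categorical concentration $\D(\sU)[\Sigma^{-1}] = 0$. By smashingness this reduces to $\un_\sU[\Sigma^{-1}] \simeq 0$ in $\D(\sU)$, and since the stalk functors $i_x^\ast : \D(\sU) \to \D(\cG_x)$ at geometric points are jointly conservative and monoidal, it suffices to check $\un_{\cG_x}[\Sigma^{-1}] \simeq 0$ on each residual gerbe. There it is immediate: choosing $\cL \in \Sigma$ with $\cL|_{\cG_x}$ trivial, the self-map $c_1(\cL|_{\cG_x})$ of $\un_{\cG_x}$ is null, and inverting a null self-map of the unit annihilates the category. (If one prefers to avoid conservativity at residual gerbes, the same vanishing follows by Noetherian induction on $\sU$: on a dense gerbe stratum over its coarse space some fixed $c_1(\cL)$ has weight zero along the gerbe direction, hence is nilpotent against the finite-dimensional cohomology of the coarse space, killing that stratum, after which the localization triangle closes the induction.) Granting this, apply it to the recollement triangle $j_!j^\ast\un_\sX \to \un_\sX \to i_\ast i^\ast\un_\sX$ in $\D(\sX)[\Sigma^{-1}]$: since $j^\ast\un_\sX = \un_\sU$ has vanishing image there, the unit map gives an equivalence $\un_\sX \xrightarrow{\ \sim\ } i_\ast\un_\sZ$ in $\D(\sX)[\Sigma^{-1}]$. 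Pushing forward along $\sX \to \pt$, using $\RGamma(\sX, i_\ast(-)) \simeq \RGamma(\sZ, -)$ together with continuity of the theory on qcqs stacks (and finiteness of $\oH^\ast$ over the subring generated by the Chern classes, so that passage to $[\Sigma^{-1}]$ commutes with $\RGamma$), one obtains an isomorphism $\oH^\ast(\sX)[\Sigma^{-1}] \to \oH^\ast(\sZ)[\Sigma^{-1}]$; unwinding the adjunctions identifies it with $i^\ast$. The two Theorems for a $G$-action on an algebraic space follow by specializing to $\sX = [X/G]$, where $\oH^\ast([X/G]) = \oH^\ast_G(X)$ and $B\uAut_{[X/G]}(x) = BG_x$.

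The hard part, I expect, is not this formal endgame but the foundations it rests on: constructing the six operations, Chern classes, and the projective-bundle and localization packages for algebraic stacks with affine stabilizers, and checking that the $\Sigma$-localization stays compatible with $i_\ast$ and $j_!$, i.e. that $\D(-)[\Sigma^{-1}]$ remains a sheaf of $\infty$-categories equipped with the six functors — this is where the affine-stabilizers hypothesis is genuinely used. A second obstacle is the geometric reduction to residual gerbes or gerbe strata, which relies on the local structure theory of such stacks and replaces the classical Mayer--Vietoris over torus orbits; and a third is the continuity/finiteness needed to translate the categorical equivalence into the stated algebraic localization of the cohomology ring, especially in the non-proper case and for the $\ell$-adic coefficients of variant (iii).
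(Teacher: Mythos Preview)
Your overall strategy---categorical concentration first, then deduce the cohomological statement---is exactly the paper's. But there is a genuine gap in the passage from categorical to cohomological, and it lies precisely where you flag it at the end.

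You work with the \emph{smashing} Bousfield localization. Your Method~A (conservativity of restrictions to residual gerbes) does establish $\un_\sU[\Sigma^{-1}] \simeq 0$ in that sense, but to conclude $\oH^*(\sU)[\Sigma^{-1}] = 0$ you would need $\RGamma(\sU,-)$ to commute with the filtered colimit defining the telescope, i.e.\ $\un_\sU$ to be compact---and for stacks this fails already for $B\Gm$ (the paper makes exactly this point; see \varntref{varnt:codicillary} and its footnote). Your patch via ``finiteness of $\oH^*$ over the Chern-class subring'' is neither true in general (e.g.\ $\oH^*(B\Gm;\bZ)=\bZ[t]$ is not finite over $\bZ[nt]$ for $|n|>1$) nor the mechanism that actually works. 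The paper instead uses the \emph{plain} Verdier quotient $\D(-)\inv$ and proves the strictly stronger statement that some \emph{finite} product of $c_1$'s from $\Sigma$ is already \emph{null-homotopic} on $\un_\sU\per$; Method~A cannot give this because the line bundle $\cL_x$ with trivial restriction varies with $x$. The nilpotency is obtained by stratifying into global quotients $[X_0/\GL_n]$, descending to the maximal torus via iterated projective and affine bundles (this is where axiom (PB2) enters), and applying Thomason's generic slice theorem to produce a dense open $[V/T]\simeq W\times BT'$ with $W$ a \emph{scheme}, on which one fixed $\cL\in\Sigma$ pulls back from $W$ and hence has nilpotent $c_1$; noetherian induction then assembles a single null $\Sigma$-product on all of $\sU$. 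Once that is in hand, the cohomology functor $\cF\mapsto\Ccoh(\sX;j_!\cF)\per\inv$---exact but \emph{not} colimit-preserving---factors through the Verdier quotient $\D(\sU)\inv=0$ by its universal property for exact functors, and cohomological concentration follows. Your parenthetical Method~B is moving in this direction, but the splitting of $\cL$ on a generic gerbe stratum (and hence the nilpotency of its $c_1$) genuinely requires the product decomposition that Thomason's theorem supplies; a general gerbe over a scheme need not split as a product.
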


  One recovers \thmref{thm:intro/concoh} by applying \thmref{thm:intro/stackyconcoh} to the inclusion $\sZ = [Z/G] \hook \sX = [X/G]$.
  Note in fact that this also implies that we may take $X$ itself to be an algebraic stack in \thmref{thm:intro/concoh} (see e.g. \cite{RomagnyGroupActions} for an introduction to group actions on stacks).

  \thmref{thm:intro/stackyconcoh} is a cohomological variant of the stacky concentration theorem of \cite[Thm.~B]{Virloc}, which was proven for Borel--Moore homology.
  The two statements are equivalent when $\sX$ and $\sZ$ are smooth, via Poincaré duality.
  The arguments of \emph{op. cit.} can be dualized to prove a variant for cohomology with \emph{compact support} $\oH^*_c(-)$, asserting the invertibility of the canonical map
  \begin{equation}\label{eq:cantala}
    \oH^*_{c}(\sX)[\Sigma^{-1}]
    \to \oH^*_{c}(\sZ)[\Sigma^{-1}],
  \end{equation}
  but not for cohomology itself (when $\sX$ is not proper).

\ssec{Categorical concentration}

  To prove \thmref{thm:intro/stackyconcoh}, we will demonstrate the following categorification of \cite[Thm.~B]{Virloc}.
  For a finite type algebraic space $X$ over $k$, let $\D(X)$ denote one of the following:
  \begin{enumcompress}[label={(\roman*)}]
    \item the derived \inftyCat of sheaves of abelian groups on the topological space $X(\bC)$ (if $k=\bC$),
    \item the derived \inftyCat of sheaves of $\Z/n\Z$-modules on the small étale site $X_\et$ (if $n$ is invertible in $k$),
    \item the derived \inftyCat of $\ell$-adic sheaves on the small étale site $X_\et$ (if $\ell$ is invertible in $k$).
  \end{enumcompress}
  Each of these admits a canonical extension to algebraic stacks, such that the Ext groups in $\D(\sX)$ calculate the cohomology groups $\oH^*(\sX)$.
  We define the categorical localization $\D(\sX)\inv$, roughly speaking, by formally inverting for every line bundle $\cL \in \Sigma$ the first Chern class $c_1(\cL)$, regarded as a morphism $\Lambda_\sX \to \Lambda_\sX(1)[2]$ (where $\Lambda_\sX$ denotes the constant sheaf on $\sX$).
  Similarly, we define $\D(\sZ)\inv$ for the substack $\sZ \sub \sX$ by inverting the morphisms $c_1(\cL|_\sZ) : \Lambda_\sZ \to \Lambda_\sZ(1)[2]$.\footnote{%
    See \ssecref{ssec:weaves/sigmalocal} for the precise definitions.
  }

  \begin{thmX}[Categorical stacky concentration]\label{thm:intro/concat}
    Let $\sX$, $\sZ$ and $\Sigma$ be as in \thmref{thm:intro/stackyconcoh}.
    Then the inclusion $i : \sZ \hook \sX$ induces an equivalence
    \begin{equation}\label{eq:hyperchloric}
      i_* : \D(\sZ)\inv \to \D(\sX)\inv.
    \end{equation}
    In particular, for every $\cF \in \D(\sX)$ the canonical morphism
    \begin{equation}\label{eq:rightwards}
      \cF \to i_*i^*\cF
    \end{equation}
    is an isomorphism up to $\Sigma$-localization.
  \end{thmX}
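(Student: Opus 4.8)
The plan is to establish the equivalence \eqref{eq:hyperchloric}, and to deduce the isomorphy of \eqref{eq:rightwards} from it formally. Write $j\colon\sU:=\sX\setminus\sZ\hook\sX$ for the complementary open immersion and $L_\Sigma$ for the $\Sigma$-localization functor. Because $\Sigma$-localization is a $\otimes$-localization and $i^*c_1(\cL)=c_1(\cL|_\sZ)$, $j^*c_1(\cL)=c_1(\cL|_\sU)$, the projection formula together with the monoidality of $i^*,j^*$ shows that each of $i_*,i^*,j_!,j^*$ preserves $\Sigma$-local and $\Sigma$-acyclic objects; hence the closed–open localization sequence $\D(\sZ)\xrightarrow{i_*}\D(\sX)\xrightarrow{j^*}\D(\sU)$ is compatible with $\Sigma$-localization and descends to a localization sequence $\D(\sZ)\inv\xrightarrow{i_*}\D(\sX)\inv\xrightarrow{j^*}\D(\sU)\inv$, with $i_*$ fully faithful since $i^*i_*\simeq\id$ persists. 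So $i_*$ is an equivalence, with quasi-inverse $i^*$, provided $\D(\sU)\inv=0$; and in that case applying $L_\Sigma$ — which commutes with $i_*$ and $i^*$ — to the map $\cF\to i_*i^*\cF$ gives $L_\Sigma\cF\to i_*i^*L_\Sigma\cF$, an isomorphism. Thus the theorem reduces to the claim $(\ast)$: \emph{if $\sU$ is of finite type over $k$ and every geometric point $x$ of $\sU$ admits some $\cL\in\Sigma$ with $\cL|_{B\uAut_\sU(x)}$ trivial, then $\D(\sU)\inv=0$.}

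To prove $(\ast)$: the locus $\sU_\cL\sub\sU$ where a fixed $\cL\in\Sigma$ is trivial on all stabilizers is constructible, and by hypothesis the $\sU_\cL$ cover $\sU$; since each irreducible component of $\sU$ then has a dense open contained in some $\sU_\cL$, a Noetherian induction on $\sU$ — applying the localization-sequence compatibility to such a dense open and its closed complement — reduces $(\ast)$ to the case in which a single $\cL\in\Sigma$ is trivial on every stabilizer of $\sU$. For such $\sU$ and $\cL$ it suffices to prove that $c_1(\cL)\in\Hom_{\D(\sU)}(\Lambda_\sU,\Lambda_\sU(1)[2])$ is nilpotent: if $c_1(\cL)^{\circ N}=0$ then in $\D(\sU)\inv$ the morphism $\Lambda_\sU\to\Lambda_\sU(1)[2]$ is at once invertible and of vanishing $N$-fold self-composite, forcing $\Lambda_\sU\simeq0$, hence $\D(\sU)\inv=0$.

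The remaining point — nilpotence of $c_1(\cL)$ when $\cL$ is trivial on all stabilizers of a finite-type stack $\sU$ — is the crux. We may take $\sU$ reduced, and nilpotence propagates along closed–open decompositions: if $c_1(\cL)^{\circ a}$ restricts to $0$ on the open part it is a pushforward $\iota_*(\alpha)$ from the closed part $\sZ'$, and then $c_1(\cL)^{\circ(a+b)}=\iota_*\!\bigl(\alpha\cdot(c_1(\cL)|_{\sZ'})^{\circ b}\bigr)$ vanishes once $c_1(\cL)|_{\sZ'}$ is nilpotent. So by Noetherian induction we may pass to a dense open substack; over such, the inertia $I_\sU\to\sU$ is flat by generic flatness, and after a further shrink $\sU$ is a gerbe $\pi\colon\sU\to V$ over a finite-type algebraic space $V$ (its rigidification along the inertia). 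As $\cL$ is trivial on the band it descends, $\cL\cong\pi^*\cL_V$, so $c_1(\cL)=\pi^*c_1(\cL_V)$ and it is enough that $c_1(\cL_V)$ be nilpotent in $\oH^*(V;\Lambda(*))$. Over $\bar k$ this is automatic, since $\oH^{>2\dim V}(V_{\bar k};\Lambda)=0$ forces $c_1(\cL_{V,\bar k})^{\dim V+1}=0$; and the sole obstruction to transporting such a bound to $k$ — unbounded Galois cohomology, which occurs only for $2$-torsion coefficients over a formally real base — is removed by Scheiderer's comparison of étale and real-étale cohomology in high degrees, using that line bundles over real closed fields are trivial. I expect this last step — and in particular the generic structure theory funnelling $c_1(\cL)$ into the cohomology of an algebraic space — to be the main obstacle; alternatively the dévissage can be run exactly as in the proof of \cite[Thm.~B]{Virloc}, peeling off stacky directions one (linearly reductive) subgroup at a time, with the localization sequence above and the compatibility of $L_\Sigma$ with the six operations carrying the categorical bookkeeping. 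Nothing in the argument distinguishes the three realizations $\D(-)$, and it adapts verbatim to the motivic categories once the relevant six-functor formalism is in place.
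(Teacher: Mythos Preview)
Your high-level plan matches the paper's exactly: pass the closed--open sequence to the $\Sigma$-localized categories and reduce to showing $\D(\sU)\inv=0$; then prove acyclicity by exhibiting a $\Sigma$-inverted endomorphism of the unit that is nilpotent, and propagate this over a stratification by noetherian induction. Where you diverge is in the d\'evissage that reaches a space on which $c_1$-nilpotence is available.

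The paper does \emph{not} pass to a generic gerbe over an algebraic space. After stratifying by global quotients $[X_0/\GL_n]$ (Hall--Rydh) it factors $[X_0/T]\to[X_0/B]\to[X_0/\GL_n]$ as an affine bundle followed by an iterated projective bundle. This projective-bundle step is precisely why the paper singles out and imposes the axiom (PB2) --- that the unit $\id\to p_*p^*$ for a projective bundle admits a retract --- which lets one detect null-homotopy after pullback along $p$. On the torus quotient it then applies Thomason's generic slice theorem to find a nonempty $T$-invariant open with $[V/T]\simeq W\times BT'$ for $W$ an affine \emph{scheme} and $T'\subseteq T$ diagonalizable; the K\"unneth splitting $\cL|_{W\times BT'}\simeq p_1^*\cL'\otimes p_2^*(\cL|_{BT'})$ together with triviality of $\cL|_{BT'}$ shows $\cL$ comes from $W$, and nilpotence of $c_1$ on a scheme is then quoted as a general fact valid in any oriented theory (D\'eglise).

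Your gerbe route is attractive because it would sidestep (PB2) altogether, but as written it has genuine gaps. First, ``trivial on the band hence descends'' is not a formality: you need a precise statement identifying the weight-zero component of $\Pic$ of an fppf gerbe with $\Pic$ of the base, and in characteristic $p$ the band may be non-smooth and the gerbe non-neutral. Second, your nilpotence argument for $c_1(\cL_V)$ on the algebraic space $V$ is realization-specific --- cohomological-dimension bounds plus Scheiderer's real/\'etale comparison are an \'etale-cohomology story --- so your closing remark that the proof ``adapts verbatim to the motivic categories'' is not supported by it. The paper's route through Thomason and D\'eglise is uniform across all the oriented topological weaves it targets (including $\on{DM}$ and $\MGL$-modules), at the price of requiring (PB2). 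If you want to pursue the gerbe approach for the three sheaf theories in the statement of Theorem~C specifically, you should cite a precise generic-gerbe structure theorem, justify the line-bundle descent carefully, and drop the claim about motivic categories.
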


  From this statement one deduces moreover that Theorems~\ref{thm:intro/concoh} and \ref{thm:intro/stackyconcoh} hold with coefficients.
  For example, we have:
  \begin{corX}\label{cor:unheeled}
    Let $\sX$, $\sZ$ and $\Sigma$ be as in \thmref{thm:intro/stackyconcoh}.
    Then for every $\cF \in \D(\sX)$, the canonical map
    \begin{equation}\label{eq:mesmerization}
      \oH^*(\sX; \cF)[\Sigma^{-1}]
      \to \oH^*(\sX; i_*i^*\cF)[\Sigma^{-1}]
      \simeq \oH^*(\sZ; i^*\cF)[\Sigma^{-1}]
    \end{equation}
    is invertible.
  \end{corX}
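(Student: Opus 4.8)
The plan is to deduce \corref{cor:unheeled} formally from \thmref{thm:intro/concat}, the one extra input being the compatibility between the categorical localization $\D(\sX)\inv$ and the ordinary localization $[\Sigma^{-1}]$ of graded modules. Recall that by construction $\oH^n(\sX;\cG)=\Hom_{\D(\sX)}(\Lambda_\sX,\cG[n])$ (and likewise with Tate twists), so that $\oH^*(\sX;\cG)$ is a graded module over $\oH^*(\sX)=\oH^*(\sX;\Lambda_\sX)$ on which $c_1(\cL)\in\oH^2(\sX;\Lambda_\sX(1))$ acts by composition, and $\D(\sX)\inv$ is obtained from $\D(\sX)$ by inverting the morphisms $c_1(\cL)\colon\Lambda_\sX\to\Lambda_\sX(1)[2]$ for $\cL\in\Sigma$; write $L$ for the localization functor.

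The first step is to establish, functorially in $\cG\in\D(\sX)$, a natural equivalence
\begin{equation*}
  \Hom_{\D(\sX)\inv}\!\big(L\Lambda_\sX,\,L\cG[n]\big)\;\simeq\;\oH^n(\sX;\cG)[\Sigma^{-1}],
\end{equation*}
the right-hand side denoting the filtered colimit of the diagram of multiplications by the classes $c_1(\cL)$, $\cL\in\Sigma$. This should come out of the construction in \ssecref{ssec:weaves/sigmalocal}: realizing $\D(\sX)\inv$ as a smashing localization, the $\Sigma$-local replacement of $\cG$ is computed as the tensor product of $\cG$ with the telescope $\Lambda_\sX[\Sigma^{-1}]$ of the maps $c_1(\cL)$, so that the left-hand side is $\Hom_{\D(\sX)}(\Lambda_\sX,\cG\otimes\Lambda_\sX[\Sigma^{-1}])$ up to shift; the identification with the right-hand side then amounts to commuting $\Hom_{\D(\sX)}(\Lambda_\sX,-)$ past this telescope, which is the one delicate point, as $\Lambda_\sX$ need not be a compact object of $\D(\sX)$ for a general stack. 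The same statement holds over $\sZ$ with $\Sigma$ replaced by its pullback $i^*\Sigma=\{c_1(i^*\cL)\}_{\cL\in\Sigma}$.

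Granting this, the argument runs as follows. By \thmref{thm:intro/concat} the canonical morphism \eqref{eq:rightwards} becomes an equivalence in $\D(\sX)\inv$, i.e.\ $L\cF\xrightarrow{\ \sim\ }L(i_*i^*\cF)$; applying $\Hom_{\D(\sX)\inv}(L\Lambda_\sX,-[n])$ together with the (natural) identification of the first step shows that $\oH^*(\sX;\cF)[\Sigma^{-1}]\to\oH^*(\sX;i_*i^*\cF)[\Sigma^{-1}]$ is an isomorphism in every degree and twist. Next, $i^*\Lambda_\sX=\Lambda_\sZ$, so the $(i^*,i_*)$-adjunction yields $\RGamma(\sX,i_*(-))\simeq\RGamma(\sZ,-)$ and hence a natural, $\oH^*(\sX)$-linear isomorphism $\oH^*(\sX;i_*i^*\cF)\simeq\oH^*(\sZ;i^*\cF)$, the module structure on the target being taken through $i^*\colon\oH^*(\sX)\to\oH^*(\sZ)$; it therefore survives the passage to $[\Sigma^{-1}]$, which on the $\sZ$-side is the localization at $i^*\Sigma$ appearing in \thmref{thm:intro/stackyconcoh}. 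Composing the two isomorphisms identifies \eqref{eq:mesmerization} with an isomorphism.

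The main obstacle is thus the first step: showing that passing to $\D(\sX)\inv$ implements the ring-theoretic $\Sigma$-localization on the functor $\Hom_{\D(\sX)}(\Lambda_\sX,-)$. This is precisely where the smashing form of the definition of $\D(\sX)\inv$ from \ssecref{ssec:weaves/sigmalocal} enters, in particular to control the interaction of the localization with filtered colimits in the absence of compactness of $\Lambda_\sX$. Once that compatibility is in hand, \corref{cor:unheeled} follows purely formally from \thmref{thm:intro/concat} together with the Leray isomorphism for the closed immersion $i$.
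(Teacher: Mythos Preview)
Your proof has a genuine gap in the first step, and it stems from a misreading of the definition of $\D(\sX)\inv$. The localization in \ssecref{ssec:weaves/sigmalocal} is the \emph{plain} Verdier quotient (in the sense of \cite[\S 7.1]{CisinskiHCHA}), not a smashing localization; the cocontinuous variant is discussed separately in \varntref{varnt:codicillary}, and the paper explicitly remarks there (see the footnote) that the plain localization is needed precisely because $\Ccoh(\sX;-)$ does not commute with filtered colimits when $\un_\sX$ fails to be compact, which already happens for $\sX=B\Gm$. So the identification
\[
  \Hom_{\D(\sX)\inv}\!\big(L\Lambda_\sX,\,L\cG[n]\big)\;\simeq\;\oH^n(\sX;\cG)[\Sigma^{-1}]
\]
that you want does not follow from a telescope description, and there is no reason to expect it to hold for general stacks. (Note also that $\cS_\Sigma$ consists of the Chern class maps on \emph{all} objects $g_!(\un_Z)\vb{n}[m]$, not just on $\Lambda_\sX$.)

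The paper avoids this problem by never computing Hom in $\D(\sX)\inv$. Instead, it uses only the \emph{universal property}: the exact functor
\[
  \D(\sX)\longrightarrow \Mod_{\Ccoh(\sX)\per\inv},
  \qquad \cF \longmapsto \Ccoh(\sX;\cF)\per\inv
\]
sends each morphism in $\cS_\Sigma$ to an isomorphism, because in the target $c_1(\cL)$ acts invertibly by $\Ccoh(\sX)\per\inv$-linearity. Hence it factors through $\D(\sX)\inv$ via some $F_\Sigma$ (see \eqref{eq:bedull}). Now either apply $F_\Sigma$ to the $\Sigma$-local equivalence $\cF\to i_*i^*\cF$ coming from \thmref{thm:intro/concat}, or equivalently run the localization triangle $\Ccoh(\sX;j_!j^*\cF)\to\Ccoh(\sX;\cF)\to\Ccoh(\sX;i_*i^*\cF)$ through $(-)\inv$ and kill the first term using categorical acyclicity. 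Your second step (the identification $\oH^*(\sX;i_*i^*\cF)\simeq\oH^*(\sZ;i^*\cF)$ via adjunction) is fine; it is the bridge between the two localizations that needs to be replaced by this universal-property argument.
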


  The idea to categorify comes from the fact that for any closed immersion $i : \sZ \hook \sX$ there is a Verdier sequence of stable \inftyCats
  \[
    \D(\sZ)
    \xrightarrow{i_*} \D(\sX)
    \to \D(\sX\setminus \sZ)
  \]
  categorifying the localization triangle on Borel--Moore chains.
  We will show that this is preserved under $\Sigma$-localization.
  Therefore, to prove \thmref{thm:intro/concat} it is possible to mimic the strategy of the proof of \cite[Thm.~B]{Virloc}, reducing to the $\Sigma$-acyclicity of $\D(\sX\setminus\sZ)$.
  This is done by categorifying every step of the proof of \emph{loc. cit}.

  We expect categorical concentration to be useful in categorifications of enumerative geometry where the invariants of interest live ``one category level up''.
  For example, cohomological Donaldson--Thomas theory is the study of a certain perverse sheaf $\phi_M$ associated with oriented $(-1)$-shifted symplectic spaces $M$ (such as the moduli of compactly supported coherent sheaves on a Calabi--Yau threefold), which categorifies the numerical Donaldson--Thomas invariants \cite{KontsevichSoibelmanCoHA,KiemLiPerverse,BBDJS}.
  This sheaf is difficult to ``compute'' in any way except when $M$ can be presented globally as the critical locus of a function $f$ (in which case $\phi_M$ is by definition the sheaf of vanishing cycles with respect to $f$, up to a twist).
  If $M$ admits a $\bG_m$-action preserving the $(-1)$-shifted symplectic structure and orientation data, then \thmref{thm:intro/concat} implies that the $\Sigma$-localization of $\phi_M$ is completely determined by its restriction to the fixed locus, which should be much easier to compute.
  For instance, when the action is ``circle-compact'', i.e., every point is in the attracting locus, this restriction is a direct sum of (shifts of) the perverse sheaves $\phi_{M_\alpha}$ associated with the irreducible components $M_\alpha$ of the fixed locus (see \cite{Richarz,Descombes} and \cite[Thm.~5.16]{BussiJoyceMeinhardt}).

\ssec{Motivic concentration}

  Recently, the formalism of Grothendieck's six operations has been extended to various stable \inftyCats of motives over algebraic stacks (we refer the reader to \cite{VoevodskyICM} for a first introduction to stable motivic homotopy theory).
  Specifically, given a finite type algebraic stack $\sX$ over $k$, with quasi-compact and separated diagonal, we may consider:
  \begin{enumcompress}[label={(\roman*)}]
    \item\label{item:dampy1} the stable \inftyCat $\on{DM}(\sX)$ of relative Voevodsky motives over $\sX$,
    \item\label{item:dampy2} the stable \inftyCat $\MGLmod(\sX)$ of $\MGL$-motives over $\sX$, i.e., modules over the algebraic cobordism spectrum $\MGL_\sX$ in $\SH(\sX)$.
    \item\label{item:dampy3} (if $\sX$ is scalloped) the stable \inftyCat $\KGLmod$ of \emph{genuine} $\KGL$-motives over $\sX$, i.e., modules over the algebraic K-theory spectrum $\KGL_\sX$ in the genuine stable motivic homotopy category $\SH_\gen(\sX)$
    .
  \end{enumcompress}
  See \cite[App.~A]{KhanVirtual}, \cite{ChowdhuryArXiv}, \cite[\S 4]{Weaves} for the first two cases and \cite[\S 2]{Sixstack} for the case of genuine $\KGL$-modules.
  Note that for quotient stacks $\sX=[X/G]$, the objects of \itemref{item:dampy1} and \itemref{item:dampy2} are coefficients for \emph{Borel-equivariant} motivic cohomology and algebraic cobordism, respectively (see \cite{Equilisse}), whereas genuine $\KGL$-cohomology recovers the homotopy K-theory of $G$-equivariant vector bundles on $X$ (see \cite[\S 12]{Sixstack}).
  An interesting feature of our proof is that, since it only requires a suitable six functor formalism, we are able to treat both types of equivariance uniformly.
  
  We find in particular that the results stated so far hold for motivic cohomology and motivic Borel--Moore homology.
  They even hold before taking hypercohomology, i.e., already for the cohomological and Borel--Moore motives
  \begin{equation}\label{eq:hydriatrist}
    \on{M}(\sX) := f_*f^*(\bZ),
    ~
    \on{M}^\BM(\sX) := f_*f^!(\bZ)
    ~\in~\on{DM}(\Spec(k))
  \end{equation}
  where $f : \sX \to \Spec(k)$ is the projection (and similarly in $\MGL$-modules, see \varntref{varnt:tempre}).

  For genuine $\KGL$-cohomology we recover an analogue of Thomason's concentration theorem \cite[Thms.~2.1, 2.2]{ThomasonLefschetz} for homotopy K-theory (see \cite[\S 4]{HoyoisKH} and \cite[\S 10.1]{Sixstack} for the definition of equivariant homotopy K-theory):

  \begin{corX}[Homotopy K-theory]\label{cor:intro/K}
    Let $X$ be a tame Deligne--Mumford stack of finite type over $k$ with an action of an affine algebraic group $G$ of multiplicative type.
    Let $\Sigma$ be a set of nontrivial rank one $G$-representations and let $Z \sub X$ be a $G$-invariant closed substack containing every geometric point $x$ of $X$ for which the restrictions $\rho|_{G_x}$ are nontrivial for all $\rho\in\Sigma$.
    Then the inclusion $i : Z \hook X$ induces an isomorphism
    \begin{equation}\label{eq:strumiferous}
      i^* : \KH^G(X)[\Sigma^{-1}] \to \KH^G(Z)[\Sigma^{-1}]
    \end{equation}
    where $\rho \in \Sigma$ acts via multiplication by $\lambda_{-1}(\rho) = 1-[\rho]$.
  \end{corX}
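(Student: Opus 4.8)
The plan is to obtain \corref{cor:intro/K} as the $\KGL$-linear shadow of categorical stacky concentration. First I would pass to the quotient stacks $\sX := [X/G]$ and $\sZ := [Z/G]$. The standing hypotheses — $X$ a tame Deligne–Mumford stack of finite type and $G$ affine of multiplicative type — are exactly what is needed for $\sX$ to be scalloped (its stabilizers are extensions of finite tame groups by diagonalizable groups, hence linearly reductive), so that the genuine $\KGL$-module formalism $\KGLmod(-)$ of \cite[\S 2]{Sixstack} is available on $\sX$ and $\sZ$; by \cite[\S 12]{Sixstack} the $\KGL$-cohomology of $\sX$ is identified with the equivariant homotopy K-theory $\KH^G(X)$, compatibly with $i^*$, and likewise for $\sZ$. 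The set $\Sigma$ of rank-one $G$-representations determines a set of line bundles $\{\cL_\rho\}_{\rho\in\Sigma}$ on $\sX$, and the hypothesis defining $Z$ translates verbatim into the concentration hypothesis of \thmref{thm:intro/concat}: a geometric point $x$ of $\sX$ has all $\cL_\rho|_{B\uAut_\sX(x)}$ nontrivial precisely when $\rho|_{G_x}$ is nontrivial for every $\rho\in\Sigma$.

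Next I would apply \thmref{thm:intro/concat} in its version for the six-functor formalism $\KGLmod$: the pushforward $i_* : \KGLmod(\sZ)\inv \to \KGLmod(\sX)\inv$ is an equivalence, where $(-)\inv$ denotes $\Sigma$-localization, i.e.\ formal inversion of the first Chern classes $c_1^{\KGL}(\cL_\rho)$ for $\rho\in\Sigma$. In particular the unit map $\un_\sX \to i_*i^*\un_\sX \simeq i_*\un_\sZ$ becomes an isomorphism in $\KGLmod(\sX)\inv$. Since $\KGL$ is $\beta$-periodic, each $c_1^{\KGL}(\cL_\rho)$ is, up to the Bott unit, a central endomorphism of $\un_\sX$, so this $\Sigma$-localization is smashing and commutes with the mapping spectrum out of the compact object $\un_\sX$; hence $\Hom_{\KGLmod(\sX)\inv}(\un_\sX, -)$ computes the ordinary localization of the $\KGL$-cohomology ring at the classes $c_1^{\KGL}(\cL_\rho)$. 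Taking $\Hom_{\KGLmod(\sX)\inv}(\un_\sX, -)$ of the unit-map isomorphism, using the $(i^*,i_*)$-adjunction — which descends to the localizations because $i^*c_1^{\KGL}(\cL_\rho) = c_1^{\KGL}(\cL_\rho|_\sZ)$ — and using $(1)[2]$-periodicity to absorb the Tate twists, yields an isomorphism of $\Sigma$-localized $\KGL$-cohomology in every bidegree, i.e.\ on all homotopy groups of equivariant homotopy K-theory.

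Finally I would reconcile the two descriptions of the inverted classes up to a harmless dualization: with the standard orientation $c_1^{\KGL}(\cL) = \beta^{-1}(1-[\cL^\vee])$, so $c_1^{\KGL}(\cL_\rho) = \beta^{-1}\lambda_{-1}(\rho^\vee)$. Since $\rho\mapsto\rho^\vee$ preserves the condition ``$\rho|_{G_x}$ nontrivial'', the set $\Sigma^\vee := \{\rho^\vee : \rho\in\Sigma\}$ defines the same closed substack $\sZ$; running the argument with $\Sigma^\vee$ in place of $\Sigma$ and discarding the Bott units shows that $i^*$ becomes an isomorphism after inverting $\{\lambda_{-1}(\rho) = 1-[\rho] : \rho\in\Sigma\}$, which is precisely the assertion.

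The two steps carrying genuine content are: (i) checking that $\sX = [X/G]$ is scalloped under the stated hypotheses — this is where tameness of $X$ and multiplicativity of $G$ are used, and it is what legitimizes the genuine $\KGL$-module formalism and its six operations; and (ii) the identification of the categorical $\Sigma$-localization with the algebraic localization of the $\KH^G$-rings, together with the Euler-class bookkeeping above. Neither is the heart of the matter: the substantive input, categorical concentration itself, is taken as known from \thmref{thm:intro/concat}.
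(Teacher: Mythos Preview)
Your overall strategy matches the paper's: pass to $\sX=[X/G]$, check it is nicely scalloped so that the genuine $\KGL$-weave is available, and then invoke the concentration theorem. The translation of the hypothesis on $\rho|_{G_x}$ into the condition on $\cL_\rho|_{B\uAut_\sX(x)}$ is correct, and the $c_1^{\KGL}$ versus $\lambda_{-1}$ bookkeeping via dualization is fine.

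There is, however, a gap in your extraction step. You pass from ``the unit $\un_\sX \to i_*i^*\un_\sX$ is an isomorphism in $\KGLmod(\sX)\inv$'' to ``$i^*$ is an isomorphism on $\KH^G(-)[\Sigma^{-1}]$'' by asserting that the $\Sigma$-localization is smashing and that $\un_\sX$ is compact. Neither claim is justified. The paper's $\Sigma$-localization $\D(\sX)\inv$ is a Verdier quotient, not a priori a Bousfield localization, and the paper explicitly warns (footnote to \varntref{varnt:codicillary}) that $\un_{B\Gm}$ fails to be compact in the lisse-extended setting; you would need to argue separately that the genuine case behaves differently, and you have not. In particular, there is no reason that $\Hom_{\KGLmod(\sX)\inv}(L_\Sigma\un_\sX,L_\Sigma\cF)$ should agree with $\Hom_{\KGLmod(\sX)}(\un_\sX,\cF)[\Sigma^{-1}]$.

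The paper's own route around this is to invoke cohomological concentration (\corref{cor:concoh}) rather than categorical concentration (\thmref{thm:intro/concat}) directly. The point of \S\ref{ssec:cohcon} is precisely that the exact functor $\cF\mapsto\Ccoh(\sX;\cF)\per\inv$ inverts $\cS_\Sigma$ by construction, hence factors through $\D(\sX)\inv$ via \eqref{eq:bedull}; no compactness is needed, only exactness. Replacing your appeal to \thmref{thm:intro/concat} by an appeal to \corref{cor:concoh} (for the weave $\on{D_\KGL^\gen}$) closes the gap with no further work.
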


  In case $X$ is a quasi-projective scheme with linearizable action, we may take $G$ to be any linearly reductive affine algebraic group.
  Using the Borel--Moore variant, which recovers equivariant G-theory (= algebraic K-theory of $G$-equivariant coherent sheaves, see \cite[Rem.~5.7]{HoyoisKH}), we also recover Thomason's concentration \cite[Thm.~2.2]{ThomasonLefschetz}.
  In \cite{Kloc} we will give a direct proof of stacky concentration in G-theory (for possibly non-scalloped stacks).

  Note that \corref{cor:intro/K} cannot hold for equivariant algebraic K-theory $\K^G(-)$ itself (i.e., without imposing homotopy invariance) because it is not nil-invariant on singular stacks.
  However, if $X$ is an algebraic space and $Z$ is the homotopy fixed point space $X^{hG}$, then we do expect the isomorphism \eqref{eq:strumiferous} to lift to $\K^G(X)[\Sigma^{-1}] \to \K^G(Z)[\Sigma^{-1}]$.

\ssec{Localization formulas}

  One standard consequence of \thmref{thm:intro/contop} in torus-equivariant cohomology is the localization formula of Atiyah--Bott and Berline--Vergne \cite{AtiyahBott,BerlineVergne} for smooth compact manifolds.
  In our context, a straightforward consequence of \thmref{thm:intro/concoh} is that for any smooth algebraic space $X$ of finite type, and any action of a split algebraic torus $T$ with fixed locus $X^T \sub X$, the inverse of the isomorphism (\thmref{thm:intro/concoh})
  \begin{equation*}
    i^* : \oH^*_T(X)\inv \to \oH^*_T(X^T)\inv
  \end{equation*}
  admits the formula
  \begin{equation}
    (i^*)^{-1} = i_!\left(- \cap e(\cN_{X^T/X})^{-1}\right)
  \end{equation}
  where $i_! : \oH^*(X^T) \to \oH^*(X)$ is the Gysin map.
  By Poincaré duality, this can also be deduced from the localization formula in Borel--Moore homology proven in \cite[Thm.~D]{Virloc} (note that as $X$ is smooth, so is $X^T$).

  In the ``virtual'' case, where $X$ is quasi-smooth, we have the following new result:

  \begin{corX}[Cohomological virtual localization formula]\label{cor:intro/cohloc}
    Let $X$ be a quasi-smooth derived algebraic space of finite type over $k$ with $T$-action.
    Then the fixed locus $X^T \sub X$ is quasi-smooth (see \cite[Prop.~A.23, Cor.~A.35]{Virloc}), and the isomorphism $i^* : \oH^*_T(X)\inv \to \oH^*_T(X^T)\inv$ of \thmref{thm:intro/concoh} has inverse given by
    \begin{equation}
      (i^*)^{-1} = i_!\left(- \cap e(\cN_{X^T/X})^{-1}\right).
    \end{equation}
  \end{corX}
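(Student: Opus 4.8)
The plan is to deduce \corref{cor:intro/cohloc} formally from the concentration isomorphism \thmref{thm:intro/concoh} together with the self-intersection formula, so that the only nonformal input is the geometry of the fixed-point inclusion, which we take from \cite{Virloc}. Write $c$ for the virtual codimension of $i : X^T \hook X$. By \cite[App.~A]{Virloc} (cf.\ Prop.~A.23 and Cor.~A.35) the inclusion $i$ is quasi-smooth, with normal bundle $\cN := \cN_{X^T/X}$ a rank-$c$ vector bundle on $X^T$ all of whose $T$-weights are nontrivial (as in the smooth case, where the normal bundle of a fixed locus is the ``moving part'' of the restricted tangent bundle). Hence the Gysin pushforward $i_! : \oH^*_T(X^T) \to \oH^{*+2c}_T(X)$ is defined (with the evident Tate twist in cases~(ii)--(iii)), the Euler class $e := e(\cN) = c_c(\cN)$ lies in $\oH^{2c}_T(X^T)$, and one has the projection formula $i_!(\alpha \cap i^*\gamma) = i_!(\alpha)\cap\gamma$ together with the self-intersection formula
\[
  i^*\,i_!(\beta) \;=\; \beta\cap e \qquad\bigl(\beta\in\oH^*_T(X^T)\bigr).
\]
All of these are instances of the six-functor formalism underlying each of the theories (i)--(iii), applied to the closed immersion of quotient stacks $[X^T/T]\hook[X/T]$.

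The first substantive point is that $e$ becomes a unit in $\oH^*_T(X^T)\inv$. I would extract this from concentration, applied twice. Applying \thmref{thm:intro/concoh} to the open complement $X\setminus X^T$ (whose $T$-fixed locus is empty) gives $\oH^*_T(X\setminus X^T)\inv \simeq 0$; feeding this into the localization triangle for $i$ and its open complement and using purity along the quasi-smooth immersion $i$, one finds that the Gysin map $i_! : \oH^*_T(X^T)\inv \to \oH^*_T(X)\inv$ is an isomorphism. Since $i^*$ is also an isomorphism by \thmref{thm:intro/concoh}, the self-intersection formula exhibits ${-}\cap e = i^*\circ i_!$ as a bijective degree-$2c$ endomorphism of $\oH^*_T(X^T)\inv$; any element mapping to $1$ is then an inverse of $e$. (When $\oH^*(X^T)$ is bounded, e.g.\ over a separably closed field, this is elementary: the weight decomposition of $\cN$ together with the splitting principle writes $e$ as a product of factors $c_1(\chi)^{r_\chi}(1 + \nu_\chi)$ with $\nu_\chi$ nilpotent, and each $c_1(\chi)$ is invertible because $\chi \in \Sigma$.)

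Granting the invertibility of $e$, the corollary is immediate. From \thmref{thm:intro/concoh} and the self-intersection formula we obtain $i_! = (i^*)^{-1}\circ({-}\cap e)$ as maps $\oH^*_T(X^T)\inv \to \oH^*_T(X)\inv$; evaluating on $\beta\cap e^{-1}$ yields
\[
  i_!\bigl(\beta\cap e^{-1}\bigr) \;=\; (i^*)^{-1}\bigl((\beta\cap e^{-1})\cap e\bigr) \;=\; (i^*)^{-1}(\beta),
\]
which is exactly the asserted formula. Note that properness of $X$ plays no role: the Gysin map along the \emph{closed} immersion $i$ is unconditionally defined, which is precisely why this localization formula holds over an arbitrary field with no compactness hypothesis, unlike the classical Atiyah--Bott--Berline--Vergne statement.

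The real work, then, is not in the corollary itself but in its two inputs, which I would handle by citation or a short preliminary lemma: (a) the identification of $\cN_{X^T/X}$ as an ``all-moving'' rank-$c$ vector bundle on the (derived) fixed locus and the quasi-smoothness of $i$, supplied by \cite[App.~A]{Virloc}; and (b) the self-intersection and projection formulas for quasi-smooth closed immersions of algebraic stacks in each of the realizations (i)--(iii), which come from the theory of fundamental classes attached to the six operations. Given these, the derivation above is purely formal, the only genuinely new ingredient being the concentration isomorphism \thmref{thm:intro/concoh}.
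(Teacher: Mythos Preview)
Your argument rests on the claim that the inclusion $i : X^T \hookrightarrow X$ is quasi-smooth with $\cN_{X^T/X}$ a vector bundle, and this is precisely where it breaks down. The results you cite from \cite{Virloc} establish that the derived fixed locus $X^T$ is quasi-smooth over $k$, not that $i$ is quasi-smooth. When $X$ is only quasi-smooth, the cotangent triangle for $X^T \to X \to \Spec(k)$ shows $\cL_{X^T/X} \in \Dperf^{[-2,-1]}(X^T)$, so $\cN_{X^T/X} = \cL_{X^T/X}[-1]$ is a perfect complex of Tor-amplitude $[0,1]$, typically not a locally free sheaf. The paper says this explicitly in the paragraph following the statement: ``the closed immersion $i$ is typically \emph{not} quasi-smooth, with conormal complex $\cN_{X^T/X}$ of Tor-amplitude $[0,1]$.''

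With $i$ not quasi-smooth, none of your inputs are available off the shelf: there is no Gysin push-forward $i_!$ from the usual six-functor package, no top Chern class $c_c(\cN)$, and hence no self-intersection formula $i^*i_! = {-}\cap e$ to invoke. The paper's actual proof constructs all three virtually, but only after $\Sigma$-localization and only at the categorical level. Concretely, it uses a generalized homotopy invariance for quasi-smooth bundles (\propref{prop:Htpy-Inv}) to define an Euler transformation $\eul^\Sigma_\cE$ for $\cE \in \Dperf^{[-1,0]}$ and a trace $\tr_i^\Sigma$ via deformation to the derived normal bundle (Constructions~\ref{constr:postcarotid} and \ref{constr:undeliberating}); it then proves a categorical virtual self-intersection formula (\corref{cor:antilacrosse}) and the categorical localization formula (\thmref{thm:catloc}), and finally decategorifies to obtain \corref{cor:truck}, of which the stated corollary is a special case. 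Your argument is essentially correct in the smooth case treated in \secref{sec:locfrmlsm}, but the passage to quasi-smooth $X$ is exactly the nontrivial content here.
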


  See \corref{cor:truck} for a general statement formulated in the context of a closed immersion $i : \sZ \hook \sX$ as in \thmref{thm:intro/concat}\footnote{%
    For example, when $X$ is Deligne--Mumford we can apply it to $i : [X^{rhT}/T] \to [X/T]$ where $X^{rhT}$ is the reparametrized homotopy fixed point stack of \cite[\S A.6]{Virloc}.
  }.
  Modulo \thmref{thm:intro/concoh}, the nontrivial part of \corref{cor:intro/cohloc} is to actually construct the Gysin map $i_!$ and the inverse Euler class $e(\cN_{X^T/X})^{-1}$ in this situation, where the closed immersion $i$ is typically \emph{not} quasi-smooth, with conormal complex $\cN_{X^T/X}$ of Tor-amplitude $[0,1]$.
  We do this by categorifying the constructions used to prove the virtual localization formula in Borel--Moore homology \cite[Thm.~D]{Virloc}.
  In particular, we prove a categorification of \corref{cor:intro/cohloc} which computes the inverse of the unit map $\id \to i_*i^*$ in the $\Sigma$-localized category (see \thmref{thm:catloc}).

  The first version of the virtual localization formula was proven in Chow homology by Graber--Pandharipande \cite{GraberPandharipande} and has had numerous applications in enumerative geometry, e.g. in Gromov--Witten and Donaldson--Thomas theory, where invariants are defined via virtual fundamental classes \cite{Kontsevich}.
  The generalization proven in \cite[Thm.~D]{Virloc} is necessary for applications involving more complicated moduli spaces/stacks that do not admit global embeddings into smooth spaces/stacks or global resolutions of their cotangent complex (see e.g. \cite[Rem.~2.20]{JoyceEnumerative}, \cite[Prop.~3.7]{SalaSchiffmann}, \cite[Thm.~7.13]{Virloc}).
  Note that the version proven here also recovers the K-theoretic virtual localization formula of \cite[\S 5]{CiocanFontanineKapranov}, again with no hypotheses on global embeddability or resolutions.
  This generality will be even more important for applications to Gromov--Witten invariants in non-archimedean analytic geometry \cite{PortaYuK,PortaYuGW}, where global embeddings and resolutions are rare (we believe our arguments carry over to the topological and analytic categories without much modification).

  \ssec{Related work}
  \label{ssec:related}

    In the context of equivariant singular cohomology of complex algebraic varieties with torus action, Evens and Mirkovi\'{c} proved a ``with coefficients'' version of concentration: for $i$ the inclusion of the fixed locus $X^T$, they showed that the canonical map in \emph{compactly supported} cohomology
    \[
      \oH^*_{T,c}(X; \cF)[\Sigma^{-1}]
      \to \oH^*_{T,c}(X; i_*i^*\cF)[\Sigma^{-1}]
    \]
    is invertible for every $\cF \in \D_T(X)$ (see \cite[Thm.~B.2]{EvensMirkovic}, as well as \cite{GoreskyKottwitzMacPherson}).\footnote{%
      For the special case of the intersection complex, such versions of concentration had been considered earlier by Joshua \cite{JoshuaIC}, Kirwan \cite{Kirwan}, and Brylinski \cite{Brylinski}.
    }
    In contrast, the isomorphism (\corref{cor:unheeled})
    \[
      \oH^*_{T}(X; \cF)[\Sigma^{-1}]
      \to \oH^*_{T}(X; i_*i^*\cF)[\Sigma^{-1}]
    \]
    appears to be new even in singular cohomology.

    Over general base fields $k$, \thmref{thm:intro/concoh} appears to be new in étale cohomology.
    See however \cite[Cor.~7.4]{IllusieZheng} for an analogue of Quillen's concentration theorem for actions of elementary abelian $\ell$-groups \cite[Thm.~4.2]{QuillenSpectrum}.

    Edidin and Graham \cite{EdidinGrahamLocalization} proved concentration theorems for the equivariant Chow groups \cite{EdidinGraham} over an arbitrary base field $k$.
    Chow groups form a Borel--Moore homology-type theory: in fact, the (equivariant) motivic Borel--Moore homology of a finite type $k$-scheme is isomorphic to the (equivariant) higher Chow groups, at least up to inverting $p=\on{char}(k)$ when $p>0$ (see \cite[Cor.~6.4]{Equilisse}).
    One may think of motivic \emph{co}homology as a (higher) Chow cohomology theory for possibly singular schemes.
    From that perspective, the only previous result in this direction we are aware of is a concentration theorem for \emph{operational} Chow cohomology with rational coefficients proven by Gonzales \cite{Gonzales}.
    However, operational Chow cohomology (introduced by Fulton \cite{Fulton}) is only a coarse approximation of motivic cohomology (e.g. the former has no cycle class map \cite[\S 8]{TotaroChowCoh}).

  \ssec{Conventions and notation}
  \label{ssec:convent}

    We fix an implicit noetherian base ring $k$.
    The term ``stack'' will mean \emph{derived algebraic stack locally of finite type over $k$} throughout the paper, where \emph{algebraic} is a synonym for \emph{$1$-Artin} (see e.g. \cite[\S 5.1]{ToenSimplicial}).
    Similarly, ``schemes'' and ``algebraic spaces'' will be derived and locally of finite type over $k$.

  \ssec{Acknowledgments}

    We are grateful to Hyeonjun Park for helpful discussions and suggestions.

    The authors were supported by NSTC grant 110-2115-M-001-016-MY3 and Academia Sinica grant AS-CDA-112-M01 (A.K.) and EPSRC grant no. EP/R014604/1 (A.K. and C.R.).
    We would like to thank the Isaac Newton Institute for Mathematical Sciences, Cambridge, for support and hospitality during the programme KAH2 where we began work on this paper.
    The second author would like to thank the Max Planck Institute for funding and excellent working conditions during her stay there.


\section{Categorical setup}
  
  \ssec{Weaves}

    Throughout the paper we will work in the context of an \emph{oriented topological weave}, which is a precise axiomatization of a sheaf theory with the six functor formalism.
    We give an informal account below and refer to \cite{Weaves} for the precise definitions.

    We fix a full subcategory $\bS \sub \Stk$ which is closed under finite coproducts and finite limits.
    Roughly speaking, a weave $\D$ on $\bS$ \footnote{%
      More precisely: a weave on $(\bS, \bS^{\mrm{repr}})$ where $\bS^{\mrm{repr}}$ is the subcategory of \emph{representable} morphisms; that is, we only require the $!$-operations to be defined for representable morphisms (which are necessarily locally of finite type, since all objects of $\Stk$ are locally of finite type over $k$).
    } amounts to a collection of \inftyCats $\D(X)$ for every $X\in\bS$, equipped with the six operations:

    \begin{enumcompress}
      \item[(1a)]\emph{Tensor:} $(-) \otimes (-) : \D(X) \times \D(X) \to \D(X)$,
      \item[(1b)]\emph{Internal Hom:} $\uHom(-, -) : \D(X)^\op \times \D(X) \to \D(X)$,
    \end{enumcompress}
    for every $X \in \bS$;

    \begin{enumcompress}
      \item[(2a)]\emph{$*$-Inverse image:} $f^* : \D(Y) \to \D(X)$,
      \item[(2b)]\emph{$*$-Direct image:} $f_* : \D(X) \to \D(Y)$,
    \end{enumcompress}
    for every morphism $f : X \to Y$;

    \begin{enumcompress}
      \item[(3a)]\emph{$!$-Direct image:} $f_! : \D(X) \to \D(Y)$,
      \item[(3b)]\emph{$!$-Inverse image:} $f^! : \D(Y) \to \D(X)$,
    \end{enumcompress}
    for every representable morphism $f : X \to Y$.

    The operation $\otimes$ is left adjoint to $\uHom$ (as bifunctors), $f^*$ is left adjoint to $f_*$, and $f_!$ is left adjoint to $f^!$.
    These operations are subject to various standard compatibilities and coherences: for example, $\otimes$ and $\uHom$ define a closed symmetric monoidal structure on $\D(X)$ (we write $\un_X \in \D(X)$ for the unit object); each of the operations $f^*$, $f_*$, $f_!$, $f^!$ is compatible with composition up to coherent homotopy; and $!$-direct image is compatible with $*$-inverse image (base change formula) and with $\otimes$ (projection formula) up to coherent homotopy.
    Moreover, for every proper representable morphism $f$ there is a canonical isomorphism $f_! \simeq f_*$ and for every smooth representable morphism there is a canonical isomorphism $f^! \simeq f^*(-) \otimes f^!(\un_Y)$ where $f^!(\un_Y)$ is $\otimes$-invertible (``Poincaré duality'').

    The condition that a weave $\D$ is \emph{topological} means that it satisfies the following:
    \begin{enumcompress}
      \item\label{item:weave/loc}
      \emph{Localization:}
      The \inftyCat $\D(\initial)$ is trivial (consists of zero objects).
      For any $X \in \bS$ and any closed-open decomposition $i : Z \to X$, $j : X\setminus Z \to X$, there is a canonical exact triangle of functors
      \begin{equation}\label{eq:loc}
        j_! j^* \to \id \to i_*i^*.
      \end{equation}

      \item\label{item:weave/htp}
      \emph{Homotopy invariance:}
      For any $X \in \bS$ and affine bundle $\pi : Y \to X$ (i.e., a torsor under a vector bundle on $X$), the unit morphism $\id \to \pi_*\pi^*$ is invertible.
    \end{enumcompress}

    We also assume that $\D$ satisfies Nisnevich descent (if all objects of $\bS$ are quasi-compact, this is automatic).
    These axioms imply that $\D(X)$ is stable for every $X$, and the localization axiom implies that the diagram
    \begin{equation*}
      \D(Z)
      \xrightarrow{i_*} \D(X)
      \xrightarrow{j^*} \D(U)
    \end{equation*}
    defines a (split) Verdier sequence of stable \inftyCats in the sense of \cite[App.~A]{HermitianII}.

    For a topological weave $\D$ we can write, for every smooth representable morphism $f : X \to Y$, $f^!(\un_Y) \simeq \un_X\vb{\Omega_f}$ where $\vb{\Omega_f}$ denotes the ``Thom twist'' by the relative cotangent sheaf $\Omega_f$.
    The choice of \emph{orientation} for $\D$ gives an identification
    $$(-)\vb{\Omega_f} \simeq (-)\vb{d} := (-)(d)[2d]$$
    where $d=\rk(\Omega_f)$ is the relative dimension of $f$, and $(d)$, resp. $[2d]$, indicates a Tate twist, resp. shift.
    Thus for an oriented topological weave, Poincaré duality reads $f^! \simeq f^*(-)\vb{d}$ for $f$ smooth representable of relative dimension $d$.

    For a locally free sheaf $\cE$ on $X\in\bS$ of rank $r$, the top Chern class may be regarded as a morphism $c_r(\cE) : \un_X \to \un_X\vb{r}$.
    For any object $\cF \in \D(X)$ this yields, by tensoring, a morphism
    \begin{equation}\label{eq:socht}
      c_r(\cE) : \cF \to \cF\vb{r}
    \end{equation}
    which in cohomology corresponds to cap product with $c_r(\cE)$.
    See \cite[Constr.~A.23]{KhanVirtual}.

    \begin{defn}\label{defn:PB}
      Given an oriented topological weave $\D$ on $\bS$, we consider the following condition:
      \begin{enumerate}
        \item[(PB1)]
        Let $X \in \bS$ and $\cE$ a locally free sheaf on $X$ of rank $r+1$, $r\ge 0$, with associated projective bundle $p : \P(\cE) \to X$.
        Then the natural transformations
        \begin{equation}
          \id
          \xrightarrow{\mrm{unit}} p_*p^*
          \xrightarrow{c_1(\cO(-1))^{\circ i}} p_*p^*\vb{i}
        \end{equation}
        induce a canonical isomorphism
        \begin{equation}\label{eq:0pgub1}
          \bigoplus_{0\le i\le r} \id\vb{-i}
          \to p_*p^*.
        \end{equation}

        \item[(PB2)]
        Let $X \in \bS$ and $\cE$ a locally free sheaf on $X$ of rank $r+1$, $r\ge 0$, with associated projective bundle $p : \P(\cE) \to X$.
        Then the natural transformation
        \[
          \mrm{unit} : \id_{\D(X)}
          \to p_*p^*
        \]
        admits a retract.
      \end{enumerate}
      By the projection formula for $p_*=p_!$, both conditions can be checked after evaluation on the unit object $\un_X$.
      The first condition is the projective bundle formula, and the second is the assertion that $p^* : \Ccoh(X) \to \Ccoh(\P(\cE))$ admits a retract.
      (PB1) implies (PB2): the projective bundle formula implies $p_!(p^*(-) \cap c_1(\cO(-1))^{\cup r}) \simeq \id_{\Ccoh(X)}$, where $p_! : \Ccoh(\P(\cE))\vb{r} \to \Ccoh(X)$ denotes the Gysin map in cohomology (notation as in \ssecref{ssec:cohcon}); see the discussion in \cite[Rem.~3.2.3]{DegliseOrientation}.
    \end{defn}

    \begin{rem}\label{rem:unebbed}
      For $\cE \simeq \cO_X^{\oplus r+1}$ it is easy to show (PB1) using the localization triangle, homotopy invariance, and induction on $r$.
      Thus (PB1) holds for any $\cE$ when $X$ is a scheme or Zariski-locally quasi-separated algebraic space, as we can reduce to the free case using Nisnevich descent.
      Note also that (PB2) holds for any $X \in \bS$ as soon as $\cE$ admits a surjection $\cE \twoheadrightarrow \cL$ with $\cL$ a line bundle, since we get a section $s$ of $p : \P(\cE) \to X$ giving rise to a retraction of $\id \to p_*p^*$.
    \end{rem}

    In general we will need to impose (PB2) as an additional axiom, although we expect it to hold for any oriented topological weave; in the next section, we will see that it holds for all the examples we consider.

  \ssec{Examples of weaves}

    \sssec{Lisse-extended weaves}
    \label{sssec:mashie}

      For $\bS \sub \Stk$ the full subcategory of \emph{schemes}, there are many examples of oriented topological weaves on $\bS$.
      In fact, the notion is equivalent to that of a Voevodsky formalism on $\sS$ (see \cite[\S 3.5]{Weaves}), and as such admits the following examples:
      \begin{enumcompress}
        \item\emph{Betti sheaves:}
        If $k$ is a $\bC$-algebra, we may take $X \mapsto \D(X)$ sending $X$ to the derived \inftyCat $\on{D}(X(\bC), \bZ)$ of sheaves of abelian groups on the topological space $X(\bC)$.

        \item\emph{Étale sheaves (finite coefficients):}
        If $n \in k^\times$, we may take $X \mapsto \D(X)$ sending $X$ to the derived \inftyCat $\on{D}_\et(X, \bZ/n\bZ)$ of sheaves of $\bZ/n\bZ$-modules on the small étale site of $X$.

        \item\emph{Étale sheaves ($\ell$-adic coefficients):}
        If $\ell \in k^\times$, we may take $X \mapsto \D(X)$ sending $X$ to the $\ell$-adic derived \inftyCat $\on{D}_\et(X, \bZ_\ell)$ of sheaves on the small étale site of $X$, i.e., the limit $\on{D}_\et(X, \bZ/\ell^n\bZ)$ over $n>0$.

        \item\label{item:weave/motives}\emph{Motives:}
        Take $X \mapsto \D(X)$ sending $X$ to the \inftyCat $\on{DM}(X) := \on{D_{H\bZ}}(X)$ of modules over the integral motivic Eilenberg--MacLane spectrum $H\bZ_X$ as in \cite{Spitzweck}.
        (See \cite[Thm.~5.1]{CisinskiDegliseIntegral} and \cite[\S 14]{CisinskiDegliseBook} for comparisons with other constructions of derived categories of motives.)

        \item\emph{Cobordism motives:}
        Take $X\mapsto \D(X)$ sending $X$ to the \inftyCat $\MGLmod(X)$ of modules over Voevodsky's algebraic cobordism spectrum $\MGL_X$ (see e.g. \cite{EHKSY}).
      \end{enumcompress}

      Via the procedure of \emph{lisse extension} (see \cite[\S 4]{Weaves}), every oriented topological weave $\D$ on schemes gives rise to a canonical oriented topological weave on (a large class of) stacks.
      More precisely, let $\bS$ denote the \inftyCat of stacks\footnote{%
        i.e., derived algebraic stacks locally of finite type over $k$ (see \ssecref{ssec:convent})
      } with quasi-compact and separated diagonal.
      Given $X \in \bS$ we consider the limit
      \[ \D(X) := \lim_{(T,t)} \D(T) \]
      over the \inftyCat $\Lis_X$ of pairs $(T,t)$ where $T$ is a scheme and $t : T \to X$ is a smooth morphism, where the transition functors are $*$-inverse image.
      This can be extended to a topological weave on $\bS$ (see \cite[\S 4]{Weaves}, \cite{ChowdhuryArXiv}).
      In case $\D$ satisfies étale descent (e.g. for the Betti or étale weaves), we can take $\bS$ to be the entire \inftyCat of stacks, i.e., without the extra conditions on the diagonal (see \cite[App.~A]{KhanVirtual}, \cite{LiuZheng,LiuZhengAdic} for the construction in that case).
      Moreover, any orientation of $\D$ on schemes also lisse-extends to an orientation on $\bS$.

      \begin{prop}\label{prop:corbiculum}
        Let $\D$ be an oriented topological weave on $\bS$.
        Suppose that $\D$ is lisse-extended, i.e., that for every $X \in \bS$ the family of functors $t^* : \D(X) \to \D(T)$ is jointly conservative as $t$ ranges over smooth morphisms $t : T \to X$ with $T$ a scheme.
        Then $\D$ satisfies condition~(PB1) of \defnref{defn:PB}.
      \end{prop}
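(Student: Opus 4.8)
The plan is to reduce the statement about stacks to the case of schemes, where condition (PB1) is known to hold (see \remref{rem:unebbed}, via localization, homotopy invariance, and Nisnevich descent from the free case). The key observation is that since $\D$ is lisse-extended, the functors $t^* : \D(X) \to \D(T)$ are jointly conservative as $t : T \to X$ ranges over smooth morphisms with $T$ a scheme; hence a morphism in $\D(X)$ is an isomorphism if and only if it becomes one after applying each $t^*$. By \defnref{defn:PB} the morphism \eqref{eq:0pgub1} to be checked is $\bigoplus_{0\le i\le r}\id\vb{-i} \to p_*p^*$, and by the projection formula it suffices to check it after evaluation on $\un_X$, i.e.\ to show that the natural map $\bigoplus_i \un_X\vb{-i} \to p_*p^*\un_X = p_*\un_{\P(\cE)}$ is an isomorphism in $\D(X)$.

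First I would fix a smooth morphism $t : T \to X$ with $T$ a scheme and analyze the base change $t^*(p_*\un_{\P(\cE)})$. Form the Cartesian square with $p_T : \P(\cE_T) = \P(\cE)\times_X T \to T$ the projective bundle associated with the pulled-back locally free sheaf $\cE_T := t^*\cE$ on $T$, and $t' : \P(\cE_T) \to \P(\cE)$ the induced (smooth, representable) morphism. Since $p$ is representable — projective bundles over stacks are representable — the smooth base change isomorphism $t^* p_* \simeq (p_T)_* (t')^*$ holds for the weave $\D$ (this is part of the six-functor formalism: proper or more generally suitably nice base change, and $p$ is proper representable so $p_* \simeq p_!$ and $!$-base change applies). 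Moreover $(t')^*\un_{\P(\cE)} \simeq \un_{\P(\cE_T)}$ and $(t')^*\cO_{\P(\cE)}(-1) \simeq \cO_{\P(\cE_T)}(-1)$, and Thom twists commute with $*$-pullback, so $t^*$ carries the canonical map \eqref{eq:0pgub1} for $(X,\cE,p)$ to the canonical map \eqref{eq:0pgub1} for $(T,\cE_T,p_T)$. Since $T$ is a scheme, the latter is an isomorphism by \remref{rem:unebbed}. As this holds for all such $t$, joint conservativity forces \eqref{eq:0pgub1} to be an isomorphism over $X$, which is (PB1).

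The main point requiring care — and the step I expect to be the principal obstacle — is the compatibility of the \emph{canonical map} \eqref{eq:0pgub1} with $*$-pullback along $t$, i.e.\ checking that $t^*$ really does intertwine the projective bundle maps over $X$ and over $T$. This amounts to tracking the unit transformation $\id \to p_*p^*$ and the Chern-class morphisms $c_1(\cO(-1))^{\circ i}$ through base change; the relevant coherences are exactly the compatibility of the unit $\id \to p_*p^*$ with base change (via the exchange/Beck--Chevalley transformation $t^*p_* \to (p_T)_*(t')^*$, which is an isomorphism here since $p$ is proper representable) and the naturality of first Chern classes under pullback of line bundles, as set up in \cite[Constr.~A.23]{KhanVirtual}. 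Once these compatibilities are in hand, the conclusion is immediate from joint conservativity. I would also remark that the same argument shows (PB2) holds in the lisse-extended setting, since a retract is detected by joint conservativity as well, though (PB1) $\Rightarrow$ (PB2) already gives this.
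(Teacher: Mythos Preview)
Your proposal is correct and follows essentially the same route as the paper's (one-sentence) proof: reduce to schemes via joint conservativity of the functors $t^*$ together with proper base change for the representable projective bundle $p$, then invoke \remref{rem:unebbed}. One small correction to your closing remark: the existence of a retraction is \emph{not} detected by joint conservativity (local retractions need not glue), so ``the same argument'' does not directly yield (PB2); as you already observe, (PB1) $\Rightarrow$ (PB2) covers this anyway.
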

      \begin{proof}
        We reduce to the case where $X$ is a scheme, which holds by \remref{rem:unebbed} (see also \cite[Thm.~1.19]{Virloc}).
      \end{proof}

    \sssec{Genuine \texorpdfstring{$\KGL$}{KGL}-motives}

      Let $\bS$ denote the \inftyCat of \emph{nicely} or \emph{linearly scalloped} stacks as in \cite[\S 2.3, App.~A]{Sixstack}.
      The class of nicely scalloped stacks includes tame algebraic stacks in the sense of \cite{AbramovichOlssonVistoli}, as well as quotients thereof by multiplicative type group actions.
      The class of linearly scalloped stacks includes quotients $[X/G]$ where $G$ is a linearly reductive affine group scheme acting linearly on a quasi-projective scheme $X$.

      If $X$ is nicely (resp. linearly) scalloped and $Y \to X$ is representable (resp. quasi-projective), then $Y$ is nicely (resp. linearly) scalloped.

      The main construction of \cite{Sixstack} provides a topological weave $\SH^\gen$ on $\bS$ sending $X$ to the \emph{genuine} stable motivic homotopy category $\SH^\gen(X)$.
      This restricts to $\SH$ on schemes, just like its lisse-extended version considered above, but does not agree for quotient stacks.
      We refer to \cite[\S 12]{Sixstack} for an extended discussion of this point.

      Taking modules over the genuine algebraic cobordism spectrum \cite[\S 10.2]{Sixstack} gives the oriented topological weave $\on{D_\MGL^\gen}$ of genuine $\MGL$-motives.
      Unfortunately, we do not know whether (PB1) or (PB2) holds in general for this weave\footnote{%
        To avoid confusion, let us reiterate that we do know this for lisse-extended cobordism motives.
      }.
      Therefore, we restrict our attention to modules over the genuine algebraic K-theory spectrum \cite[\S 10.1]{Sixstack}, which gives an oriented topological weave $\on{D_\KGL^\gen}$ of genuine $\KGL$-motives.

      \begin{prop}
        The oriented topological weave $\on{D_\KGL^\gen}$ satisfies condition~(PB1) of \defnref{defn:PB}.
      \end{prop}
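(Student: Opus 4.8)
The plan is to recognise condition~(PB1) for $\on{D_\KGL^\gen}$ as the projective bundle formula in genuine equivariant homotopy $K$-theory, and to deduce the latter from the semiorthogonal decomposition of the category of perfect complexes on a projective bundle, which is available over an arbitrary base. First I would reduce to the unit object: by the projection formula for $p_*=p_!$ noted after \defnref{defn:PB} it suffices to show that, for every scalloped stack $X$ and every locally free sheaf $\cE$ of rank $r+1$, the morphism
\[
\phi\colon\quad \bigoplus_{0\le i\le r}\un_X\vb{-i}\ \too\ p_*\un_{\P(\cE)}
\]
assembled from the unit and the powers $c_1(\cO(-1))^{\circ i}$ is an equivalence in $\on{D_\KGL^\gen}(X)$; here $p\colon\P(\cE)\to X$ is projective, hence representable and proper (so that $p_!\simeq p_*$ and the Thom twists make sense), and $\P(\cE)$ is again scalloped.

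Next I would use that $\phi$ is compatible with $*$-pullback (proper base change) together with the fact --- part of the construction of $\SH^\gen$, hence of $\on{D_\KGL^\gen}$, in \cite{Sixstack} --- that $\on{D_\KGL^\gen}(X)$ is generated under colimits and Tate twists by the objects $f_!\un_Y$ with $f\colon Y\to X$ smooth representable. Consequently $\phi$ is an equivalence as soon as $\oH^*(Y;f^*\phi)$ is an isomorphism for every such $f$; note that $Y$ is then again in $\bS$ and $\P(\cE_Y)\to Y$ is again a projective bundle, with $\P(\cE_Y)$ scalloped. By \cite[\S 10, \S 12]{Sixstack}, $\oH^*(W;\un_W)\simeq\KH(W)$ for every scalloped $W$, and Bott periodicity identifies each Tate twist $\vb{1}=(1)[2]$ with a shift. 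So, writing $p'\colon\P(\cE_Y)\to Y$, what remains is to show that the map $\bigoplus_{0\le i\le r}\KH(Y)\to\KH(\P(\cE_Y))$ obtained by pulling back along $p'$ and multiplying by the powers of the $K$-theoretic first Chern class of $\cO(-1)$ --- which under the Bott identification is multiplication by $1-[\cO(1)]$ --- is an isomorphism.

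This is the classical $K$-theoretic projective bundle formula. It follows from the semiorthogonal decomposition
\[
\on{Perf}(\P(\cE_Y))=\langle\, p'^*\on{Perf}(Y),\ p'^*\on{Perf}(Y)\otimes\cO(-1),\ \dots,\ p'^*\on{Perf}(Y)\otimes\cO(-r)\,\rangle
\]
(Beilinson, Orlov), valid over any object of $\bS$, by passing to $K$-theory spectra and then to homotopy $K$-theory: this exhibits $\{[\cO(-i)]\}_{0\le i\le r}$ as a $\KH(Y)$-basis of $\KH(\P(\cE_Y))$, and $\{(1-[\cO(1)])^i\}_{0\le i\le r}$ is obtained from it by an invertible triangular change of basis over $\KH(Y)$ (cf.\ \cite[Rem.~3.2.3]{DegliseOrientation}), whence the displayed map is an isomorphism. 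In the special case $\cE\simeq\cO_X^{\oplus r+1}$ one can instead invoke \remref{rem:unebbed} directly, using only the localization triangle, homotopy invariance and the orientation; the role of the semiorthogonal decomposition is precisely to supply the reduction from a general $\cE$ to a free one, which on a stack cannot be achieved by making $\cE$ Nisnevich-locally trivial.

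I expect the main obstacle to be this matching step, i.e.\ checking that the comparison of \cite[\S 12]{Sixstack} between $\on{D_\KGL^\gen}$-cohomology and $\KH$ intertwines $p_*$ with the $K$-theoretic pushforward and carries $c_1(\cO(-1))$, after the Bott identification, to multiplication by $\lambda_{-1}(\cO(1))=1-[\cO(1)]$, so that the basis produced by the semiorthogonal decomposition is the one prescribed by~(PB1); and, more routinely, that $\on{D_\KGL^\gen}(X)$ really is generated by the $f_!\un_Y$ for $f$ smooth representable, which for a quotient chart $[\Spec A/G]$ amounts to compact generation of the genuine equivariant category by suspension spectra of smooth $G$-schemes.
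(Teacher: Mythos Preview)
Your approach is essentially the paper's: reduce (PB1) to the projective bundle formula in $\KH$ by testing against smooth representable $f\colon Y\to X$ and using the representability $\oH^*(W;\un_W)\simeq\KH(W)$, then invoke the $K$-theoretic projective bundle formula. The paper packages your two ``obstacles'' (generation by $f_!\un_Y$ and the matching of $c_1$ with the $K$-theoretic class) into citations of \cite[Prop.~3.7, Thm.~10.7]{Sixstack}, and for the $\KH$ projective bundle formula it cites \cite[Cor.~3.6]{kblow} rather than deriving it from the Beilinson--Orlov semiorthogonal decomposition as you do---but that corollary is proved precisely via the semiorthogonal decomposition, so the difference is cosmetic.
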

      \begin{proof}
        In view of \cite[Prop.~3.7 and Thm.~10.7]{Sixstack} this amounts to the condition that for every nicely (resp. linearly) scalloped stack $X$, every locally free sheaf $\cE$ on $X$ of rank $r+1$, and every smooth representable (resp. quasi-projective) $Y \to X$, the maps of spectra
        \[
          p_Y^*(-) \cup \lambda_{-1}(\cO(-1)) : \KH(Y) \to \KH(\P(\cE|_Y),
        \]
        where $p_Y : \P_Y(\cE|_Y) \to Y$, induce an isomorphism
        \[
          \bigoplus_{0\le i\le r} \KH(Y) \to \KH(\P(\cE|_Y)).
        \]
        This follows from the corresponding formula for (non-homotopy) algebraic K-theory, see e.g. \cite[Cor.~3.6]{kblow}.
      \end{proof}

  \ssec{\texorpdfstring{$\Sigma$}{Sigma}-localization}
  \label{ssec:weaves/sigmalocal}

    From now on, $\D$ will be an oriented topological weave on $\bS$ satisfying condition~(PB2) of \defnref{defn:PB}.

    Given a set of line bundles $\Sigma \sub \Pic(X)$, we denote by $\cS_\Sigma$ the (essentially small) set consisting of the morphisms \eqref{eq:socht}
    \[ c_1(\cL) : g_!(\un_Z)[m]\vb{n} \to g_!(\un_Z)[m]\vb{n+1} \]
    for all $\cL \in \Sigma$, all representable morphisms of finite type $g : Z \to X$, and all integers $m,n\in\Z$.

    We consider the localization of $\D(X)$ at $\cS_\Sigma$ in the sense of \cite[\S 7.1]{CisinskiHCHA}, or equivalently the Verdier quotient by the full (stable) subcategory spanned by the cofibres of the morphisms in $\cS_\Sigma$ (see e.g. \cite[Prop.~A.1.5]{HermitianII}).
    This is a (large) stable \inftyCat $\D(X)\inv$ with an essentially surjective and exact functor
    \begin{equation}\label{eq:fosse}
      L_\Sigma : \D(X) \to \D(X)\inv
    \end{equation}
    which inverts $\cS_\Sigma$ and satisfies the following universal property: for every stable \inftyCat $\cC$, restriction along \eqref{eq:fosse} determines an equivalence
    \begin{equation}\label{eq:overgreasy}
      \Fun_\mrm{ex}(\D(X)\inv, \cC)
      \to \Fun_{\mrm{ex},\Sigma}(\D(X), \cC),
    \end{equation}
    where the decoration ``$\mrm{ex}$'', resp. ``$\mrm{ex},\Sigma$'', indicates that we consider the full subcategory of exact functors, resp. exact functors inverting $\cS_\Sigma$.
    We say that a morphism in $\D(X)$ is a \emph{$\Sigma$-local equivalence} if it is inverted by $L_\Sigma$.

    Given any two functors $F$ and $G$ out of $\D(X)\inv$, there is a canonical isomorphism of \inftyGrpds of natural transformations
    \begin{equation}\label{eq:discouragingness}
      \on{Nat}(F, G)
      \to \on{Nat}(F\circ L_\Sigma, G\circ L_\Sigma),
    \end{equation}
    by taking mapping \inftyGrpds in \eqref{eq:overgreasy}.

    We will require some very limited functoriality of the $\Sigma$-localization.
    Given a representable finite type morphism $f : Y \to X$, we also consider the localization of $\D(Y)$ with respect to the set $f^*\cS_\Sigma$:
    \[ L_\Sigma : \D(Y) \to \D(Y)\inv. \]
    Since $f^*$ tautologically sends $\cS_\Sigma$ to $f^*\cS_\Sigma$, we obtain by universal property an essentially unique exact functor
    \[ f^*_\Sigma : \D(X)\inv \to \D(Y)\inv \]
    such that $f^*_\Sigma \circ L_\Sigma \simeq L_\Sigma \circ f^*$.

    Moreover, $f_!$ sends $f^*\cS_\Sigma$ to $\cS_\Sigma$.
    Indeed, given $\cL \in \Sigma$ and a representable finite type morphism $g : Z \to X$, consider the endomorphism $c_1(\cL) : g_!(\un_Z) \to g_!(\un_Z)\vb{1}$.
    The induced endomorphism
    \begin{equation*}
      f_!f^*(c_1(\cL)) : f_!f^*g_!(\un_Z) \to f_!f^*g_!(\un_Z)\vb{1}
    \end{equation*}
    is identified under the base change formula $f_!f^*g_!(\un_Z) \simeq h_!(\un_{W})$, where $h : W \to X$ is the projection of $W = Y \fibprod_X Z$, with the endomorphism induced by the action of $c_1(\cL)$ on $h_!(\un_{W})$.
    Thus there exists an essentially unique exact functor
    \[ f_!^\Sigma : \D(Y)\inv \to \D(X)\inv \]
    such that $f_!^\Sigma \circ L_\Sigma \simeq L_\Sigma \circ f_!$.

    If $f$ is proper, then $f^*_\Sigma$ is left adjoint to $f_*^\Sigma := f_!^\Sigma$.
    For example, to construct the unit transformation $\id \to f_*^\Sigma f^*_\Sigma$ when $f$ is proper, use the isomorphism
    \begin{equation*}
      \on{Nat}(\id_{\D(X)\inv}, f_*^\Sigma f^*_\Sigma)
      \to \on{Nat}(L_\Sigma, f_*^\Sigma f^*_\Sigma L_\Sigma)
    \end{equation*}
    of \eqref{eq:discouragingness}.
    The desired natural transformation corresponds to the morphism $L_\Sigma \to L_\Sigma f_*f^* \simeq f_*^\Sigma f^*_\Sigma L_\Sigma$ induced by the unlocalized unit.

    Similarly, for every $n\in\Z$ the operation $\vb{n} : \D(Y) \to \D(Y)$ preserves $f^*\cS_\Sigma$, hence induces an operation on $\D(Y)\inv$ which we still denote by $\vb{n}$.

    If $f$ is smooth of relative dimension $d$, then $f_!^\Sigma$ is left adjoint to $f^!_\Sigma := f^*_\Sigma\vb{d}$ by the same reasoning.

    \begin{lem}\label{lem:locseq}
      Let $i : Z \to X$ be a closed immersion and denote by $j : U \to X$ the complementary open immersion.
      Then there is a Verdier sequence of stable \inftyCats
      \begin{equation*}
        \D(Z)\inv
        \xrightarrow{i_*^\Sigma} \D(X)\inv
        \xrightarrow{j^*_\Sigma} \D(U)\inv.
      \end{equation*}
    \end{lem}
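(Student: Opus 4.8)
The plan is to deduce the localized Verdier sequence from the unlocalized one (the split Verdier sequence $\D(Z) \xrightarrow{i_*} \D(X) \xrightarrow{j^*} \D(U)$ provided by the localization axiom \eqref{eq:loc}) by checking that $\Sigma$-localization is compatible with all the structure involved. Recall that a diagram of stable \inftyCats is a Verdier sequence precisely when $i_*$ is fully faithful and $j^*$ induces an equivalence $\D(X)/\D(Z) \xrightarrow{\sim} \D(U)$ of Verdier quotients; equivalently (since the unlocalized sequence is split) $i_*$ admits a right adjoint $i^!$ and a left adjoint $i^*$ with $i^*i_* \simeq \id \simeq i^!i_*$, $j^*$ admits a fully faithful left adjoint $j_!$ and right adjoint $j_*$, and the localization triangle \eqref{eq:loc} holds. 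The key observation is that each of $i_*$, $i^*$, $j^*$, $j_!$ is an exact functor between the relevant categories sending the generating set $\cS_\Sigma$ (or its pullback) to $\Sigma$-local equivalences, so each descends to the $\Sigma$-localizations; then the adjunctions and the localization triangle descend formally via the universal property and the natural-transformation identification \eqref{eq:discouragingness}, exactly as was done above for $f$ proper.

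Concretely, first I would record that $i_*^\Sigma$, $j^*_\Sigma$ exist (already discussed in the excerpt: $i_*$ is proper representable so $i_* = i_!$ sends $i^*\cS_\Sigma$ to $\cS_\Sigma$, and $j^*$ sends $\cS_\Sigma$ to $j^*\cS_\Sigma$ tautologically) and that $j_!$ exists on localizations since $j$ is an open immersion, hence representable of finite type, and $j_! \simeq j_?$ sends $j^*\cS_\Sigma$ into $\cS_\Sigma$ by the very definition of $\cS_\Sigma$ (which ranges over all representable finite-type $g : Z' \to X$, and $g \circ (\text{a representable map to }U)$ is again such). Next, applying $L_\Sigma$ to the localization triangle $j_!j^* \to \id \to i_*i^*$ and using that $L_\Sigma$ is exact yields a triangle $j_!^\Sigma j^*_\Sigma \to \id_{\D(X)\inv} \to i_*^\Sigma i^*_\Sigma$ of endofunctors of $\D(X)\inv$ — here one uses the identifications $L_\Sigma j_! \simeq j_!^\Sigma L_\Sigma$ etc. and the compatibility \eqref{eq:discouragingness} to see the localized natural transformations are the images of the unlocalized ones. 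From the triangle one reads off that $j^*_\Sigma i_*^\Sigma \simeq 0$ (apply $j^*_\Sigma$, or equivalently localize $j^* i_* \simeq 0$) and that $i^*_\Sigma i_*^\Sigma \simeq \id$ (localize $i^*i_* \simeq \id$, which exhibits $i_*^\Sigma$ as fully faithful). Finally, to identify $\D(X)\inv / \D(Z)\inv$ with $\D(U)\inv$: the kernel of $j^*_\Sigma$ is the localizing subcategory generated by the image of $i_*^\Sigma$ (this follows from the triangle, since any object killed by $j^*_\Sigma$ is, up to the triangle, in the image of $i_*^\Sigma$), and $j^*_\Sigma$ is essentially surjective because $j^*$ is (it admits the section $j_!$ up to the triangle, or directly: $j^* j_! \simeq \id$ descends). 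Together these give the desired Verdier sequence.

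The step I expect to require the most care is the compatibility of the localization triangle with $\Sigma$-localization at the level of \emph{functors} (not just objectwise): one must check that $L_\Sigma$ applied to the natural transformations $j_!j^* \to \id$ and $\id \to i_*i^*$ really does produce the structure maps of a triangle of functors on $\D(X)\inv$, which is where \eqref{eq:discouragingness} and the essential uniqueness of $j_!^\Sigma$, $i_*^\Sigma$, $i^*_\Sigma$ as exact functors compatible with $L_\Sigma$ are used; and one must make sure that $\cS_\Sigma$ on $X$ and its restrictions $i^*\cS_\Sigma$, $j^*\cS_\Sigma$ interact correctly with these functors — in particular that $j_!$ sends $j^*\cS_\Sigma$-local equivalences to $\cS_\Sigma$-local equivalences, which is immediate from the definition of $\cS_\Sigma$ but deserves an explicit remark. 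Everything else is a formal consequence of the universal property \eqref{eq:overgreasy} together with the fact that exactness of $L_\Sigma$ preserves (co)fibre sequences.
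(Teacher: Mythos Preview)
Your proposal is correct and follows essentially the same approach as the paper's proof: start from the unlocalized split Verdier sequence, check $j^*_\Sigma \circ i_*^\Sigma \simeq 0$, and descend the localization triangle. The paper is simply terser—it records the objectwise triangle $j_!^\Sigma j^*_\Sigma(L_\Sigma\cF) \to L_\Sigma\cF \to i_*^\Sigma i^*_\Sigma(L_\Sigma\cF)$ (which suffices since $L_\Sigma$ is essentially surjective) and declares the rest ``follows formally'', whereas you spell out the full faithfulness of $i_*^\Sigma$ and the identification of the quotient explicitly.
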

    \begin{proof}
      Since $\D$ satisfies the localization property, the claim holds before $\Sigma$-localization.
      It is clear that $j^*_\Sigma \circ i_*^\Sigma \simeq 0$ since this can be checked after applying $L_\Sigma$ on the right.
      Moreover, the localization triangle \eqref{eq:loc} gives rise, by applying $L_\Sigma$ on the left, to an exact triangle
      \[
        j_!^\Sigma j^*_\Sigma (L_\Sigma\cF)
        \to L_\Sigma\cF
        \to i_*^\Sigma i^*_\Sigma (L_\Sigma\cF)
      \]
      for every $\cF \in \D(X)$.
      The claim now follows formally.
    \end{proof}

    \begin{varnt}\label{varnt:codicillary}
      If $\D$ is cocomplete (i.e., $\D(Y)$ admits small colimits for every $Y \in \bS$), then we could instead consider the \emph{cocontinuous} localization in the sense of \cite[Rem.~7.7.10]{CisinskiHCHA}.
      That is, for a representable finite type morphism $f : Y \to X$, there exists a cocomplete \inftyCat $\D(Y)\llbracket\Sigma^{-1}\rrbracket$ equipped with a colimit-preserving localization functor
      $$L_\Sigma : \D(Y) \to \D(Y)\llbracket\Sigma^{-1}\rrbracket$$
      and with a universal property like \eqref{eq:overgreasy} but for \emph{colimit-preserving} functors instead of arbitrary exact functors.
      For example, if $\D(Y)$ is presentable then $L_\Sigma$ admits a right adjoint (by the adjoint functor theorem) which is automatically fully faithful and identifies $\D(Y)\llbracket\Sigma^{-1}\rrbracket$ with the full subcategory of $f^*\cS_\Sigma$-local objects in $\D(Y)$ (i.e., the localization in the sense of \cite[\S 5.2.7]{LurieHTT}).

      Our proof of concentration (\thmref{thm:concat}) will go through \emph{mutatis mutandis} for $\D(-)\llbracket\Sigma^{-1}\rrbracket$ in place of $\D(-)\inv$ when $\D$ is cocomplete.
      However, we need to consider the ``plain'' localization $\D(-)\inv$ in order to derive cohomological concentration (\corref{cor:concoh})\footnote{%
        Indeed, the functor \eqref{eq:murmurish} typically does not preserve colimits when $X$ is a stack.
        In fact, $X \mapsto \Ccoh(X)$ preserves colimits if and only if $\un_X$ is a compact object of $\D(X)$, an assertion which fails already for the simplest relevant example $X=B\Gm$.
      }.
    \end{varnt}

    \begin{rem}
      We note one pleasant advantage of the cocontinuous localization of \varntref{varnt:codicillary}.
      If $\D$ is presentable and satisfies continuity (i.e., $R\mapsto \D(\Spec(R))$ preserves filtered colimits, see \cite[Def.~2.16]{KhanSix}), then in the cocontinuous variant of categorical concentration mentioned above, one can drop the quasi-compactness hypothesis and thus consider \emph{locally} of finite type stacks.
      In contrast, this fails for the ``plain'' localization and for the cohomological localization, cf. \cite[\S 2.7]{Virloc}.
    \end{rem}

\section{Categorical concentration}

  \ssec{Acyclicity}

    \begin{thm}[Categorical acyclicity]\label{thm:acyclic}
      Let $X \in \bS$ with affine stabilizers, and $\Sigma \sub \Pic(X)$ a subset.
      Let $U \sub X$ be a quasi-compact open satisfying the following condition:
      \begin{thmlist}[label={\rm(L)},ref={(L)}]
        \item\label{cond:L}
        For every geometric point $x$ of $U$, there exists a line bundle $\cL(x) \in \Sigma$ whose restriction $\cL(x)|_{B\uAut_X(x)}$ is trivial.
      \end{thmlist}
      Then we have $\D(U)\inv = 0$.
    \end{thm}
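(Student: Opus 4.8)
The plan is to run a dévissage that reduces \thmref{thm:acyclic} to the case of a locally closed substack on which some $\cL\in\Sigma$ restricts to the \emph{trivial} line bundle — where $\Sigma$-acyclicity is essentially formal — and then to reassemble the general case via the Verdier sequences of \lemref{lem:locseq}.

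First, the formal case: suppose $S\in\bS$ and $\cL\in\Sigma$ with $\cL|_S\simeq\cO_S$. Then $c_1(\cL|_S)=0$ as a morphism $\un_S\to\un_S\vb1$, so for every representable finite type $h:W\to S$ and all $m,n$ the associated morphism \eqref{eq:socht}, which by the projection formula equals $h_!\big(c_1(h^*\cL|_S)\big)[m]\vb n=h_!(0)=0$, is the zero morphism $h_!(\un_W)[m]\vb n\to h_!(\un_W)[m]\vb{n+1}$; its cofibre is $h_!(\un_W)[m]\vb{n+1}\oplus h_!(\un_W)[m+1]\vb n$. Hence the kernel of the localization $\D(S)\to\D(S)\inv$ contains every shift and Thom twist of every $h_!(\un_W)$. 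Since the objects $h_!(\un_W)\vb n[m]$ generate $\D(S)$ as a thick subcategory — the standard generation property for the (constructible) categories underlying an oriented topological weave — that kernel is all of $\D(S)$, i.e.\ $\D(S)\inv=0$.

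Next, the dévissage. For $\cL\in\Sigma$ the locus $U_\cL\sub U$ of geometric points $x$ with $\cL|_{B\uAut_X(x)}$ trivial is constructible: it is the complement of the image in $U$ (of the inertia $I_U$, which is of finite type, so Chevalley applies) of the open substack of $I_U$ where the canonical character $I_U\to\Gm$ attached to $\cL$ is nontrivial. By \ref{cond:L} the $U_\cL$ cover $U$, and quasi-compactness — hence noetherianity — of $U$ gives a finite subcover $U_{\cL_1},\dots,U_{\cL_r}$. I would then stratify $U$ by stabilizer type into finitely many locally closed substacks, each of which — after a further finite refinement — is a gerbe $\pi:S\to Y$ over an algebraic space $Y$ of finite type; refining so that each $S$ lies in some $U_{\cL_i}$, the bundle $\cL_i|_S$ is trivial on the band of $\pi$, hence descends Nisnevich-locally on $Y$ to a line bundle, and (as a Nisnevich cover of the noetherian $Y$ has a finite filtration by locally closed subsets over each of which it is split) after one more refinement $\cL_i|_S\simeq\pi^*\cM$ with $\cM\in\Pic(Y)$; finally, trivializing $\cM$ over a finite Zariski cover of $Y$ and pulling back yields the desired stratification $U=\bigsqcup_{a=1}^N S_a$, with each $\cL_{i(a)}|_{S_a}\simeq\cO$.

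To finish, order the $S_a$ so that $S_1\cup\dots\cup S_j$ is closed in $U$ for each $j$. By the formal case $\D(S_a)\inv=0$ for all $a$, so \lemref{lem:locseq} applied to $S_1\hook U$ gives $\D(U)\inv\simeq\D(U\setminus S_1)\inv$, and iterating (each later $S_j$ being closed in the remaining open) yields $\D(U)\inv\simeq\D(S_N)\inv=0$. The main obstacle is the stratification step: the facts that a noetherian algebraic stack with affine stabilizers stratifies into gerbes over algebraic spaces, and that a line bundle trivial on the band of such a gerbe is Nisnevich-locally pulled back from the base, are structural results about stacks that must be quoted and checked in each setting where \thmref{thm:acyclic} is used (lisse-extended weaves on stacks with quasi-compact separated diagonal; genuine $\KGL$-motives on scalloped stacks; they are transparent for Deligne–Mumford stacks via coarse moduli spaces). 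All the rest uses only the localization axiom, the projection formula, and the generation property above.
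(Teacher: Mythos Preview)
There is a genuine gap in your ``formal case'': the claim that the objects $h_!(\un_W)\vb{n}[m]$ generate $\D(S)$ as a thick subcategory fails for the weaves at hand. The categories $\D(S)$ in this paper --- e.g.\ the full derived \inftyCat of sheaves of abelian groups on $S(\bC)$, or the \inftyCat $\MGLmod(S)$ --- are large presentable \inftyCats in which the $h_!(\un_W)$ generate only under \emph{colimits}, not thickly (your parenthetical ``(constructible)'' suggests you already sense this tension). So knowing that every $h_!(\un_W)\vb{n}[m]$ lies in the kernel of $L_\Sigma$ does not give $\D(S)\inv=0$; there is no reason an arbitrary $\cF\in\D(S)$ should become zero. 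The paper's proof circumvents this by a different mechanism: rather than arguing via generators, it produces a single null-homotopic $\Sigma$-local endomorphism $s$ of $\un_X\per$ and then \emph{tensors} with an arbitrary $\cF$ to obtain a null-homotopic $\Sigma$-local endomorphism of $\cF\per$, whence $L_\Sigma(\cF)=0$ as a direct summand of $L_\Sigma(\cF\per)$.

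Your d\'evissage is also substantively different from the paper's. You stratify $U$ into gerbes over algebraic spaces and descend the relevant $\cL$ to the base, refining until $\cL$ becomes literally trivial. The paper instead passes (via the (PB2) axiom for projective bundles --- which your argument never invokes --- together with an affine-bundle step) from a global quotient by $\GL_n$ to one by its maximal torus, and then applies Thomason's generic slice theorem to reach a product $W\times BT'$ where the problem reduces to the \emph{nilpotence} of $c_1(\cL')$ on the scheme $W$, not triviality of $\cL$. If your formal case were repaired (say, by the tensoring argument above), your gerbe stratification could in principle yield a proof that avoids (PB2) altogether, which would be a genuine gain; but the descent step --- ``a line bundle trivial on the band of a gerbe is Nisnevich-locally pulled back from the base'' --- would then have to be justified carefully for arbitrary Artin stacks with affine stabilizers, not just in the Deligne--Mumford case you allude to.
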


    \begin{defn}
      If an open $U \sub X$ satisfies condition~\itemref{cond:L}, we say simply that $U$ is \emph{$\Sigma$-admissible}.
    \end{defn}

    \begin{rem}
      In fact, the proof of \thmref{thm:acyclic} will show that there exists a single $\Sigma$-local equivalence $s$ such that $\D(U)[s^{-1}] = 0$.
    \end{rem}

    \begin{proof}[Proof of \thmref{thm:acyclic}]
      Replacing $X$ by $U$, $\Sigma$ by $j^*\Sigma$, and $j$ by the identity, we may assume $X=U$ is $\Sigma$-admissible and quasi-compact.

      Let us write
      \[ \cF\per := \bigoplus_{n\in\Z} \cF\vb{n} \]
      for any $\cF \in \D(X)$.
      We claim that there exists a $\Sigma$-local equivalence $\un_X\per \to \un_X\per$ which is null-homotopic.
      In particular, for every $\cF \in \D(X)$ the induced endomorphism of $\un_X\per \otimes \cF \simeq \cF\per$ is a null-homotopic $\Sigma$-local equivalence, hence $L_\Sigma(\cF\per) \simeq 0$.
      Since $L_\Sigma(\cF)$ is a direct summand of $L_\Sigma(\cF\per)$ (by functoriality), the claim will follow.

      Suppose we are given a $\Sigma$-local equivalence $s : \un_X\per \to \un_X\per$.
      We begin with the following observations:
      \begin{enumerate}
        \item
        If $\pi : X' \to X$ is an affine bundle, then $\pi^*$ is fully faithful by homotopy invariance.
        Thus if $\pi^*(s)$ is null-homotopic, then $s$ is itself null-homotopic.

        \item
        If $\pi : X' \to X$ is a projective bundle, then $s$ is a retract of $\pi^*(s)$ by (PB2) (see \defnref{defn:PB}).
        Thus if $\pi^*(s)$ is null-homotopic, then $s$ is itself null-homotopic.

        \item\label{item:spongiosity}
        If $X' \sub X$ is a nonempty open such that $s|_{X'}$ is null-homotopic over $X'$, then there exists some $\Sigma$-local equivalence $t : \un_X\per \to \un_X\per$ which is null-homotopic over $X$.
        Indeed, by noetherian induction on the complement this reduces to the following observation.
        Suppose given $\Sigma$-local equivalences $s',s'' : \un_X\per \to \un_X\per$ such that $s'$ is null-homotopic over some closed substack $Z \sub X$ and $s''$ is null-homotopic over $X\setminus Z$.
        Then $t=s'\circ s''$ is a $\Sigma$-local equivalence which is null-homotopic over both $Z$ and $X\setminus Z$, and from the localization triangle
        \[
          j_!j^* \un_X\per \to \un_X\per \to i_*i^* \un_X\per,
        \]
        where $i : Z \to X$ and $j : X\setminus Z \to X$ are the inclusions, we deduce that $t$ is null-homotopic over $X$.
      \end{enumerate}

      Since $X$ has affine stabilizers, there exists a nonempty open with a global quotient presentation (see \cite[Prop.~2.6]{HallRydhGroups}).
      By \itemref{item:spongiosity} we may therefore assume that $X = [X_0/G]$ where $G=\GL_n$ for some $n \geq 0$ and $X_0$ is a quasi-affine scheme of finite type over $k$ with $G$-action.

      Let $T$ (resp. $B$) denote the subgroup of diagonal matrices (resp. upper triangular matrices) in $G=\GL_n$.
      Since $B \sub G$ is a Borel subgroup, so that $G/B$ is a complete flag, the canonical morphism $[X_0/B] \to [X_0/G] = X$ factors as a sequence of iterated projective bundles.
      Moreover, the canonical morphism $[X_0/T] \to [X_0/B]$ is an affine bundle (see the proof of \cite[Thm.~1.13]{ThomasonAtiyahSegal}).
      By the above observations, it will thus suffice to produce a $\Sigma$-local equivalence $s : \un_X\per \to \un_X\per$ which is null-homotopic after $*$-inverse image to some nonempty open of $[X_0/T]$.

      By \cite[Thm.~4.10, Rem.~4.11]{ThomasonComp} (where we may assume $X_0$ reduced by nil-invariance), there exists a nonempty $T$-invariant affine open $V\sub X_0$ and a diagonalizable subgroup $T'\sub T$ such that 
      \[ [V/T] \simeq W \times BT' \]
      with $W$ an affine scheme.
      Choose a geometric point $v$ of $[V/T]$ and denote its image in $X$ by $x$.
      The $\Sigma$-admissibility of $X$ means that there exists an invertible sheaf $\cL \in \Sigma$ with $\cL|_{B\uAut_X(x)}$ trivial.
      In particular, the further restriction $\cL|_{BT'}$ along $BT' \to B\uAut_X(x)$ is also trivial (where we identify $T' = \uAut_{[V/T]}(v)$).
      Using \cite[Exp.~I, 4.7.3]{SGA3}, we can therefore write
      \begin{equation} \label{eq:Kunneth}
        \cL|_{[V/T]}
        \simeq p_1^* (\cL') \otimes p_2^* (\cL|_{BT'})
        \simeq p_1^* (\cL'),
      \end{equation}
      where $p_i$ denotes the projection of $W \times BT'$ to the $i$th factor, and $\cL'$ is an invertible sheaf on $W$.
      Since $W$ is a scheme, the endomorphism $c_1(\cL') : \un_{W}\per \to \un_{W}\per$ is nilpotent (see e.g. \cite[Prop.~2.1.22(1)]{DegliseOrientation}).
      In other words, there exists an integer $n\gg 0$ such that the endomorphism $s := c_1(\cL)^{n} : \un_X\per \to \un_X\per$, which is a $\Sigma$-local equivalence since $\cL \in \Sigma$, becomes null-homotopic after $*$-inverse image to $[V/T]$.
    \end{proof}

  \ssec{Concentration}

    \begin{thm}[Categorical concentration]\label{thm:concat}
      Let $X \in \Stk$ with affine stabilizers.
      Let $\Sigma \sub \Pic(X)$ be a subset of line bundles.
      For any closed immersion $i: Z \to X$ with quasi-compact and $\Sigma$-admissible complement $X \setminus Z$, the functor
      \[ i_*^\Sigma: \D(Z)\inv \to \D(X)\inv \]
      is an equivalence.
    \end{thm}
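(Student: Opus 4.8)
The plan is to obtain \thmref{thm:concat} as a formal consequence of the two results already in place: categorical acyclicity (\thmref{thm:acyclic}) and the $\Sigma$-localized localization sequence (\lemref{lem:locseq}). This is exactly the categorification of the strategy used for \cite[Thm.~B]{Virloc}: concentration gets reduced to $\Sigma$-acyclicity of the open complement.

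First I would write $j : U := X \setminus Z \hook X$ for the open immersion complementary to $i$. As an open substack of $X$, the stack $U$ has affine stabilizers, and it is quasi-compact and $\Sigma$-admissible by hypothesis (condition~\itemref{cond:L} for $U$ relative to $j^*\Sigma$ is nothing but the restriction to $U$ of the corresponding condition on $X$). Hence \thmref{thm:acyclic} applies to $U$ and gives $\D(U)\inv = 0$, where $\D(U)\inv$ denotes the localization of $\D(U)$ at $j^*\cS_\Sigma$ — which is exactly the category occurring as the third term in \lemref{lem:locseq}.

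Next I would invoke \lemref{lem:locseq} for the closed immersion $i : Z \to X$, producing a Verdier sequence of stable \inftyCats
\[
  \D(Z)\inv \xrightarrow{\ i_*^\Sigma\ } \D(X)\inv \xrightarrow{\ j^*_\Sigma\ } \D(U)\inv .
\]
By definition of a Verdier sequence, $i_*^\Sigma$ is fully faithful and its essential image is precisely the kernel of $j^*_\Sigma$. Since $\D(U)\inv = 0$, that kernel is all of $\D(X)\inv$, so $i_*^\Sigma$ is also essentially surjective, hence an equivalence. This completes the argument.

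I do not anticipate a serious obstacle, since all the genuine content has already been carried out in the proof of \thmref{thm:acyclic}: the noetherian induction on the complement, the reduction through a Borel subgroup and then a maximal torus, and the Thomason-style local structure theory producing a line bundle in $\Sigma$ whose first Chern class becomes nilpotent after passing to a suitable affine scheme. The only points deserving attention are bookkeeping ones — that the set $j^*\cS_\Sigma$ defining $\D(U)\inv$ in \lemref{lem:locseq} coincides with the one for which \thmref{thm:acyclic} is stated (it does, by the construction of $f^*_\Sigma$ recalled in \ssecref{ssec:weaves/sigmalocal}), and that $X$, hence $Z$ and $U$, lies in the subcategory $\bS$ carrying the weave, as \thmref{thm:acyclic} requires.
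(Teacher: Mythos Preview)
Your proposal is correct and follows exactly the paper's own proof: apply \thmref{thm:acyclic} to the complement $U = X \setminus Z$ to obtain $\D(U)\inv = 0$, and then conclude from the Verdier sequence of \lemref{lem:locseq}. The paper compresses this to a single sentence, but your elaboration of why the Verdier sequence forces $i_*^\Sigma$ to be an equivalence (fully faithful with image the kernel of $j^*_\Sigma$, which is everything) is accurate and matches the intended reasoning.
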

    \begin{proof}
      We have $\D(X \setminus Z)\inv = 0$ by acyclicity (\thmref{thm:acyclic}), so the claim follows from \lemref{lem:locseq}.
    \end{proof}

\section{Cohomological concentration}
\label{ssec:cohcon}

  \ssec{Cohomology \& Borel--Moore homology}

    For any $X \in \bS$, the \emph{cohomology spectrum}, resp. with coefficients in $\cF \in \D(X)$, is the mapping spectrum
    \[
      \Ccoh(X) := \Ccoh(X; \un_X),
      \quad\text{resp.}~
      \Ccoh(X; \cF) := \Maps_{\D(X)}(\un_X, \cF).
    \]
    For an integer $n\in\Z$ we set
    \[
      \Ccoh(X; \cF)\vb{n} := \Ccoh(X; \cF\vb{n}) = \Maps_{\D(X)}(\un_X, \cF\vb{n})
    \]
    and
    \[
      \Ccoh(X; \cF)\per := \bigoplus_{n\in\Z} \Ccoh(X; \cF)\vb{n}.
    \]

  \ssec{Cohomological concentration}

    Given $X \in \bS$ and a subset $\Sigma \sub \Pic(X)$, let
    $$S_{\Sigma} \subseteq \pi_0 \Ccoh(X)\per$$
    denote the set of first Chern classes $c_1(\cL)$ for $\cL \in \Sigma$.
    For a $\Ccoh(X)\per$-module spectrum $M$, we denote by $$M\inv \simeq M \otimes_{\Ccoh(X)\per} \Ccoh(X)\per\inv$$ its localization at $S_\Sigma$ in the sense of \cite[Rem.~7.2.3.18]{LurieHA}.

    Let $\Mod_{\Ccoh(X)\per\inv}$ denote the \inftyCat of $\Ccoh(X)\per\inv$-module spectra.
    Given an open $U \sub X$, consider the exact functor $F : \D(U) \to \Mod_{\Ccoh(X)\per\inv}$ sending
    \begin{equation}\label{eq:murmurish}
      \cF \mapsto \Ccoh(X; j_!\cF)\per\inv
    \end{equation}
    where $j : U \to X$ denotes the inclusion.
    For every $\cG = g_!(\un_Z)$, with $g : Z \to X$ a representable finite type morphism, and every $\cL \in \Sigma$, the morphism
    \[ c_1(\cL) : \Ccoh(X; j_!j^*\cG)\per\inv \to \Ccoh(X; j_!j^*\cG)\per\inv \]
    is invertible by $\Ccoh(X)\per\inv$-linearity, so $F$ inverts $\cS_\Sigma$ and thus induces an essentially unique exact functor
    \begin{equation}\label{eq:bedull}
      F_\Sigma : \D(U)\inv \to \Mod_{\Ccoh(X)\per\inv}
    \end{equation}
    with $F_\Sigma\circ L_\Sigma \simeq F$.

    Now if $\cF \in \D(U)$ such that $L_\Sigma(\cF)\simeq 0$, then also $F(\cF) \simeq F_\Sigma(L_\Sigma(\cF)) \simeq 0$.
    In particular, categorical acyclicity (\thmref{thm:acyclic}) yields:

    \begin{cor}[Cohomological acyclicity]\label{cor:cohacyclic}
      Let $X \in \Stk$ with affine stabilizers and $\Sigma \sub \Pic(X)$ a subset of line bundles.
      If $U \sub X$ is a $\Sigma$-admissible quasi-compact open, then we have $$\Ccoh(X; j_!\cF)\per\inv \simeq 0$$
      for all $\cF \in \D(U)$, where $j : U \to X$ denotes the inclusion.
      In particular, we have $\Ccoh(U; \cF)\per\inv \simeq 0$.
    \end{cor}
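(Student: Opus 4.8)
The plan is to read the corollary off directly from categorical acyclicity (\thmref{thm:acyclic}) together with the exact functor $F_\Sigma$ constructed in the paragraphs immediately preceding the statement. First I would invoke \thmref{thm:acyclic} for the quasi-compact, $\Sigma$-admissible open $U \subseteq X$: this gives $\D(U)\inv = 0$, where — following the conventions of \ssecref{ssec:weaves/sigmalocal} — $\D(U)\inv$ denotes the localization of $\D(U)$ at $j^*\cS_\Sigma$. In particular, the localization functor $L_\Sigma \colon \D(U) \to \D(U)\inv$ sends every object to a zero object, i.e.\ $L_\Sigma(\cF) \simeq 0$ for all $\cF \in \D(U)$.

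Next I would feed this into the factorization $F \simeq F_\Sigma \circ L_\Sigma$ established just above: since $F_\Sigma \colon \D(U)\inv \to \Mod_{\Ccoh(X)\per\inv}$ is exact it preserves zero objects, so $F(\cF) = \Ccoh(X; j_!\cF)\per\inv \simeq F_\Sigma(L_\Sigma(\cF)) \simeq 0$ for every $\cF \in \D(U)$, which is exactly the first assertion. For the ``in particular'' statement I would apply this first assertion to the identity open immersion $\id \colon U \hook U$ — legitimate because the open substack $U$ again lies in $\Stk$, has affine stabilizers, and is quasi-compact and $(j^*\Sigma)$-admissible (note $\uAut_X(x) = \uAut_U(x)$ for geometric points $x$ of $U$, so the two notions of admissibility coincide) — obtaining $\Ccoh(U; \cF)\per\inv \simeq 0$, the localization now being understood at $S_{j^*\Sigma} \subseteq \pi_0\Ccoh(U)\per$ (equivalently, at the image of $S_\Sigma$ under $j^* \colon \Ccoh(X)\per \to \Ccoh(U)\per$, which agrees by base change of rings).

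I do not expect any genuine obstacle here: all of the substantive work has already been carried out in \thmref{thm:acyclic}, and what remains is the purely formal observation that an exact functor out of the zero category is zero, together with the bookkeeping that identifies $F(\cF)$ with $\Ccoh(X; j_!\cF)\per\inv$. The one point worth being careful about is matching up the set of Chern classes one inverts on $U$ — namely $j^*\cS_\Sigma$ on the categorical side, $S_{j^*\Sigma}$ on the cohomological side — with the sets appearing in \thmref{thm:acyclic} and in the module-spectrum localization of \ssecref{ssec:cohcon}; but this is immediate once the definitions are unwound.
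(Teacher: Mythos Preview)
Your proposal is correct and follows essentially the same route as the paper: the paper's argument is precisely to note that $L_\Sigma(\cF)\simeq 0$ by \thmref{thm:acyclic}, then apply $F_\Sigma$ to obtain $F(\cF) \simeq F_\Sigma(L_\Sigma(\cF)) \simeq 0$, and for the second statement to replace $X$ by $U$ (and $\Sigma$ by $j^*\Sigma$, $j$ by the identity). Your additional bookkeeping remarks about matching up the admissibility conditions and the localizing sets are accurate but not strictly needed.
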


    For the second statement, apply the first with $X$ replaced by $U$ (and $\Sigma$ by $j^*\Sigma$, $j$ by the identity).

    \begin{cor}[Cohomological concentration]\label{cor:concoh}
      Let $X\in\Stk$ with affine stabilizers and $\Sigma \sub \Pic(X)$ a subset of line bundles.
      Let $i : Z \to X$ be a closed immersion such that $X \setminus Z$ is $\Sigma$-admissible and quasi-compact.
      Then for every $\cF \in \D(X)$, the unit $\cF \to i_*i^*\cF$ and counit $i_*i^!\cF \to \cF$ induce isomorphisms
      \begin{align}
        \label{eq:anarchy1}
        \Ccoh(X; \cF)\per\inv \to \Ccoh(Z; i^*\cF)\per\inv\\
        \label{eq:anarchy}
        \Ccoh(Z; i^!\cF)\per\inv \to \Ccoh(X; \cF)\per\inv.
      \end{align}
    \end{cor}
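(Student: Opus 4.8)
The plan is to deduce both isomorphisms from cohomological acyclicity (\corref{cor:cohacyclic}) applied to the open complement $j : U = X \setminus Z \hook X$, using the localization triangle of the topological weave. First I would treat the unit map \eqref{eq:anarchy1}. Writing $j : U \to X$ for the open immersion, the localization triangle \eqref{eq:loc} applied to $\cF$ gives an exact triangle
\[
  j_!j^*\cF \to \cF \to i_*i^*\cF
\]
in $\D(X)$. Applying the exact functor $\Ccoh(X; -)\per\inv$ (which preserves cofibre sequences, being a mapping spectrum followed by a localization) yields an exact triangle of $\Ccoh(X)\per\inv$-module spectra
\[
  \Ccoh(X; j_!j^*\cF)\per\inv
  \to \Ccoh(X; \cF)\per\inv
  \to \Ccoh(X; i_*i^*\cF)\per\inv.
\]
Since $U$ is $\Sigma$-admissible and quasi-compact and $j^*\cF \in \D(U)$, the left-hand term vanishes by \corref{cor:cohacyclic}. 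Hence the second map is an equivalence. It remains to identify $\Ccoh(X; i_*i^*\cF)\per\inv$ with $\Ccoh(Z; i^*\cF)\per\inv$; this follows from the adjunction $\Maps_{\D(X)}(\un_X, i_*i^*\cF) \simeq \Maps_{\D(Z)}(i^*\un_X, i^*\cF) \simeq \Maps_{\D(Z)}(\un_Z, i^*\cF)$ (using $i^*\un_X \simeq \un_Z$), compatibly with the module structures and with the Tate twists $\vb{n}$, so that it survives $\per$ and $\inv$.

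For the counit map \eqref{eq:anarchy}, I would argue dually. The same localization triangle, rewritten using $j_! j^* \simeq j_! j^!\vb{?}$—more precisely using that for the open immersion $j$ one has $j^! \simeq j^*$, so the triangle reads $j_! j^! \cF \to \cF \to i_* i^* \cF$—is not quite what I want; instead I use the \emph{other} localization triangle $i_* i^! \to \id \to j_* j^*$ coming from the recollement (equivalently, the dual of \eqref{eq:loc} obtained by applying $\uHom(-,\cF)$ to the triangle $j_!\un_U \to \un_X \to i_*\un_Z$). This gives an exact triangle in $\D(X)$
\[
  i_*i^!\cF \to \cF \to j_*j^*\cF,
\]
and applying $\Ccoh(X; -)\per\inv$ produces an exact triangle of $\Ccoh(X)\per\inv$-modules. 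Now $\Ccoh(X; j_*j^*\cF)\per\inv \simeq \Ccoh(U; j^*\cF)\per\inv$ by the $(j^*, j_*)$-adjunction (again compatibly with twists and module structure), and this vanishes by the last statement of \corref{cor:cohacyclic} applied with $X$ replaced by $U$. Hence $\Ccoh(X; i_*i^!\cF)\per\inv \to \Ccoh(X; \cF)\per\inv$ is an equivalence, and $\Ccoh(X; i_*i^!\cF)\per\inv \simeq \Ccoh(Z; i^!\cF)\per\inv$ by the $(i_*, i^!)$-adjunction $\Maps_{\D(X)}(\un_X, i_*i^!\cF) \simeq \Maps_{\D(Z)}(i^*\un_X, i^!\cF) \simeq \Maps_{\D(Z)}(\un_Z, i^!\cF)$.

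The main obstacle I anticipate is purely bookkeeping rather than conceptual: one must check that all of the adjunction isomorphisms ($i^*\un_X \simeq \un_Z$ and the resulting identification of mapping spectra, and likewise for $j$) are compatible with the Tate-twist operations $\vb{n}$ and hence descend to the direct sums $(-)\per$, and that they are $\Ccoh(X)\per$-linear so that they are preserved by the localization $(-)\inv = (-) \otimes_{\Ccoh(X)\per} \Ccoh(X)\per\inv$. The twist-compatibility is immediate since $\vb{n}$ commutes with $i^*, i_*, i^!, j^*, j_*$ up to canonical isomorphism (the twist is pulled back from $\Spec k$), and the module structure is the evident one; so no genuine difficulty arises. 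One should also note that the existence of the second localization triangle $i_*i^! \to \id \to j_*j^*$ is formal from the split Verdier sequence $\D(Z) \xrightarrow{i_*} \D(X) \xrightarrow{j^*} \D(U)$ recorded in the discussion following the topological weave axioms.
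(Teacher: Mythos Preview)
Your argument is correct and follows essentially the same route as the paper: apply the two localization triangles $j_!j^*\cF \to \cF \to i_*i^*\cF$ and $i_*i^!\cF \to \cF \to j_*j^*\cF$, pass to $\Ccoh(X;-)\per\inv$, and kill the open term via \corref{cor:cohacyclic}. One cosmetic slip: the identification $\Maps_{\D(X)}(\un_X, i_*i^!\cF) \simeq \Maps_{\D(Z)}(\un_Z, i^!\cF)$ uses the $(i^*, i_*)$-adjunction (applied with target $i^!\cF$), not the $(i_*, i^!)$-adjunction as you label it.
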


    \begin{proof}\leavevmode
      Consider the localization triangle
      \[
        \Ccoh(X; j_!j^*\cF)\per
        \to \Ccoh(X; \cF)\per
        \to \Ccoh(X; i_*i^*\cF)\per
      \]
      where $j$ is the inclusion of $U = X\setminus Z$.
      It remains exact after applying the exact functor $(-)\inv$.
      Since $U$ is $\Sigma$-admissible and quasi-compact, the left-most term vanishes after localization by \corref{cor:cohacyclic}.
      This shows the first isomorphism \eqref{eq:anarchy1}.

      Similarly, we have the localization triangle
      \[
        \Ccoh(X; i_*i^!\cF)\per
        \to \Ccoh(X; \cF)\per
        \to \Ccoh(X; j_*j^*\cF)\per
      \]
      where the right-most term $\Ccoh(X; j_*j^*\cF)\per \simeq \Ccoh(U; j^*\cF)\per$ vanishes after localization by \corref{cor:cohacyclic}.
    \end{proof}

    \begin{rem}\label{rem:keelhale}
      The second isomorphism \eqref{eq:anarchy} recovers concentration in Borel--Moore homology (as in \cite{Virloc}): taking $\cF = a_X^!(\Lambda)$ where $a_X : X \to \Spec(k)$ is the projection and $\Lambda \in \D(\Spec(k))$, we get the isomorphism
      \[
        i_* : \CBM(Z; \Lambda)\per\inv \to \CBM(X; \Lambda)\per\inv
      \]
      where $\CBM(X; \Lambda) := \Ccoh(X; a_X^!(\Lambda))$ and similarly for $Z$.
    \end{rem}

    \begin{varnt}\label{varnt:tempre}
      We can replace all occurrences of $\Ccoh(X; -) = \Maps(\un_X, -)$ above by $a_{X,*}(-)$, where $a_X : X \to \Spec(k)$ is the projection.
      In other words, we can stop short of taking derived global sections $\RGamma(\Spec(k), -) : \D(\Spec(k)) \to \Spt$.
      Thus for example when $\D$ is the weave of motives (or cobordism motives), we get isomorphisms
      \begin{align*}
        i^* : \on{M}(X)\inv &\to \on{M}(Z)\inv,\\
        i_* : \on{M}^\BM(Z)\inv &\to \on{M}^\BM(X)\inv
      \end{align*}
      under the assumptions of \corref{cor:concoh}, where the notation is as in \eqref{eq:hydriatrist}.
    \end{varnt}

\section{Categorical localization formula}
\label{sec:locfrmlsm}

  In this section we prove a categorification of the Atiyah--Bott--Berline--Vergne localization formula (see \thmref{thm:catlocsm}).
  This is a warm-up for the virtual version, to be proven in \secref{sec:catloc}.

  We fix a diagonalizable group scheme $T$ over $k$ and assume $k$ has no nontrivial idempotents.
  We denote by $\Sigma \subseteq \Pic(BT)$ the set of all nontrivial line bundles; by abuse of notation, we also write $\Sigma$ for the set $f^*\Sigma\sub \Pic(\sX)$ for any $f : \sX \to BT$.

  \ssec{Invertibility of Euler classes}

    We will use the following corollary of categorical concentration (\thmref{thm:concat}), which follows just as in \cite[Thm.~3.1]{Virloc}:

    \begin{thm}[$T$-Equivariant categorical concentration]\label{thm:equiconcat}
      Let $X \in \Stk$ be quasi-compact and $Z \to X$ a $T$-equivariant closed immersion such that for every geometric point $x \in X \setminus Z$ we have\footnote{%
        Here $T_x$ is the fibre $T \otimes_k k(x)$ and $\St^T_X(x)$ is the \emph{$T$-stabilizer} at a geometric point $x \in X$, defined as the image of the map $\alpha: \uAut_\sX(x) \to T_x$ induced by the $T$-action.
        See \cite[Def.~A.4]{Virloc}.
      } $\St^T_X(x) \subsetneq T_x$.
      Denote by $i : \sZ \to \sX$ the induced morphism between $\sZ := [Z/T]$ and $\sX := [X/T]$.
      Then the functor
      \[i_*: \D(\sZ)\inv \to \D(\sX)\inv \]
      is an equivalence.
    \end{thm}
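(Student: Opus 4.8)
The plan is to reduce the $T$-equivariant statement to the general categorical concentration theorem (\thmref{thm:concat}) already proven. The key observation is that the quotient stack $\sX = [X/T]$ has affine stabilizers (since $T$ is diagonalizable, hence affine), so \thmref{thm:concat} applies provided the open complement $\sX \setminus \sZ = [(X\setminus Z)/T]$ is quasi-compact and $\Sigma$-admissible, where $\Sigma = f^*\Sigma_{BT} \subseteq \Pic(\sX)$ is the pullback of the set of all nontrivial line bundles on $BT$ along the structure map $f : \sX \to BT$. Quasi-compactness is immediate from the quasi-compactness of $X$. So the entire content is to verify condition~\itemref{cond:L} for the open substack $[(X\setminus Z)/T]$.

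First I would unwind what a geometric point of $[(X\setminus Z)/T]$ and its automorphism group look like: a geometric point $\bar x$ of $[(X\setminus Z)/T]$ corresponds to a geometric point $x$ of $X\setminus Z$, and $\uAut_{[X/T]}(\bar x)$ fits into an exact sequence relating it to the stabilizer of the $T$-action at $x$; the composite $\alpha : \uAut_{[X/T]}(\bar x) \to T_x$ is precisely the $T$-stabilizer map of \cite[Def.~A.4]{Virloc}, with image $\St^T_X(x)$. Line bundles on $BT$ are classified by characters of $T$, i.e. by the character group $\widehat{T_x}$ over $k(x)$. A line bundle $\cL_\chi \in \Sigma$ pulls back to a line bundle on $B\uAut_{[X/T]}(\bar x)$ whose class is the image of $\chi$ under restriction $\widehat{T_x} \to \widehat{\St^T_X(x)} \to \widehat{\uAut_{[X/T]}(\bar x)}$ along $\alpha$. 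So $\cL_\chi|_{B\uAut_{[X/T]}(\bar x)}$ is trivial if and only if $\chi$ restricts to the trivial character of $\St^T_X(x)$.

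Now the hypothesis is that $\St^T_X(x) \subsetneq T_x$ for every geometric point $x \in X\setminus Z$. Since $T_x$ is a diagonalizable group over a separably closed field, a proper closed subgroup is the kernel of a nontrivial family of characters — concretely, the restriction map $\widehat{T_x} \to \widehat{\St^T_X(x)}$ is \emph{not} injective, so there exists a nontrivial character $\chi$ of $T_x$ that dies on $\St^T_X(x)$. (Here one uses the standard duality between diagonalizable groups and finitely generated abelian groups, as in \cite[Exp.~I]{SGA3}, to see that a proper inclusion of diagonalizable groups dualizes to a non-injective surjection of character groups, and to produce such a $\chi$.) The corresponding line bundle $\cL_\chi$ is nontrivial, hence lies in $\Sigma = f^*\Sigma_{BT}$, and by the previous paragraph its restriction to $B\uAut_{[X/T]}(\bar x)$ is trivial. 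This is exactly condition~\itemref{cond:L}, so $[(X\setminus Z)/T]$ is $\Sigma$-admissible, and \thmref{thm:concat} gives that $i_* : \D(\sZ)\inv \to \D(\sX)\inv$ is an equivalence.

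The main obstacle — really the only non-formal point — is the group-theoretic translation in the last paragraph: correctly identifying the automorphism groups in the quotient stack with the (image of the) $T$-stabilizer, matching the $\Sigma$-triviality condition on $B\uAut$ with triviality of a character on $\St^T_X(x)$, and producing the separating character from $\St^T_X(x) \subsetneq T_x$ over a separably closed field. Once these identifications are in place, citing \cite[Thm.~3.1]{Virloc} for the precise bookkeeping (as the statement already indicates) and invoking \thmref{thm:concat} closes the argument with no further work.
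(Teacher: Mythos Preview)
Your proposal is correct and matches the paper's approach exactly: the paper gives no argument beyond recording that the statement follows from \thmref{thm:concat} ``just as in \cite[Thm.~3.1]{Virloc}'', and you have spelled out precisely that reduction and the character-theoretic verification of condition~\itemref{cond:L}. One small imprecision worth noting: affine stabilizers for $[X/T]$ require not only that $T$ be affine but also that $X$ have affine stabilizers (the automorphism group at a point of $[X/T]$ sits in an extension involving $\uAut_X(x)$ and a subgroup of $T_x$); this is a standing hypothesis the paper imposes elsewhere rather than a consequence of $T$ being diagonalizable alone.
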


    Suppose now that $X$ and $Z$ are \emph{smooth}, or more generally that the inclusion $Z \to X$ is quasi-smooth (i.e., the relative cotangent complex $\cL_{Z/X}$ lies in $\Dperf^{[-1,-1]}(Z)$).
    In this case there is a canonical natural transformation
    \begin{equation}\label{eq:trader}
      \tr_i : i_*i^*\vb{-c} \to \id
    \end{equation}
    called the \emph{trace} (see \cite[4.3.1]{DegliseJinKhan} and \cite[Rem.~3.8]{KhanVirtual}), where $c$ is the rank of $\cL_{Z/X}[-1]$.
    Recall also the Euler transformation
    \begin{equation}\label{eq:proboscidial}
      \eul_{\cN_{\sZ/\sX}} : \id \to \vb{c}
    \end{equation}
    associated with the locally free sheaf $\cN_{\sZ/\sX} = \cL_{\sZ/\sX}[-1]$, which can be described as the composite
    \[
      \id \simeq \pi_!\pi^*\vb{c}
      \xrightarrow{\mrm{unit}} \pi_!0_!0^*\pi^*\vb{c}
      \simeq \vb{c}
    \]
    where $\pi : \V_\sZ(\cN_{\sZ/\sX}) \to \sZ$ is the projection of the normal bundle and the isomorphism comes from homotopy invariance and purity.

    By \eqref{eq:discouragingness}, $\tr_i$ and $\eul_{\cN_{\sZ/\sX}}$ descend to natural transformations
    \[
      \tr_i^\Sigma : i_*^\Sigma i^*_\Sigma \vb{-c} \to \id_{\D(\sX)\inv},
      \qquad
      \eul_{\cN_{\sZ/\sX}}^\Sigma : \id_{\D(\sZ)\inv} \to \vb{c}.
    \]
    When $Z$ has finite stabilizers and trivial $T$-action, the following implies that $\eul_{\cN_{\sZ/\sX}}^\Sigma$ is invertible as long as $\cN_{\sZ/\sX}$ has no fixed part (see \cite[Def.~5.2]{Virloc} for the definitions of the \emph{fixed} and \emph{moving} parts of a complex on $\sZ \simeq Z \times BT$).

    \begin{cor}\label{cor:unretentive}
      Let $X \in \Stk$ be quasi-compact and let $\cE$ be a connective coherent complex on $\sX := X\times BT$, i.e., a coherent complex on $X$ that is equivariant with respect to the trivial $T$-action.
      If $X$ has finite stabilizers and $\cE$ has no fixed part, then the zero section of the derived cone $\sE := \V_\sX(\cE) \to \sX$ induces a canonical equivalence
      \[
        0_* : \D(\sX)\inv \to \D(\sE)\inv.
      \]
    \end{cor}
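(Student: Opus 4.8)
The plan is to reduce \corref{cor:unretentive} to the $T$-equivariant concentration theorem \thmref{thm:equiconcat} applied to the closed immersion $0 : \sX \hook \sE$. We must verify the hypothesis: for every geometric point $e$ of $\sE \setminus \sX$, the $T$-stabilizer $\St^T_\sE(e)$ is strictly contained in $T_{e}$. First I would observe that $\sE \setminus \sX$ is the locus where the tautological section of $\pi^*\cE$ on $\sE$ is nonzero (here $\pi : \sE \to \sX$ is the projection), so a geometric point $e$ of this locus lies over a geometric point $x$ of $\sX$ and picks out a nonzero vector $v$ in the fibre $\cE_x$ (or rather in $H_0$ of the derived fibre; since $\cE$ is connective this is where the section can be nonzero). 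Because the $T$-action on $X$ is trivial, $T_x$ acts on $X$ trivially, so the $T$-action on $\sE$ over $x$ is just the linear $T$-action on the vector (complex) $\cE_x$ coming from the equivariant structure. The $T$-stabilizer of $e$ is then the stabilizer in $T_x$ of the vector $v$ under this linear action.

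Next I would use the hypothesis that $\cE$ has no fixed part: decomposing $\cE|_{\sX \simeq X \times BT}$ into weight spaces $\cE \simeq \bigoplus_{\chi} \cE_\chi$ indexed by characters $\chi$ of $T$, the no-fixed-part assumption means $\cE_0 = 0$, i.e., every weight $\chi$ appearing is nontrivial. A nonzero vector $v \in \cE_x$ has a nonzero component $v_\chi$ in some weight space with $\chi \neq 0$; then $t \in T_x$ fixes $v$ only if $\chi(t) = 1$ (among other conditions), so $\St^T_\sE(e) \subseteq \ker(\chi) \subsetneq T_x$, using that $\chi$ is a nontrivial character and $k$ has no nontrivial idempotents so that $T_x$ is connected-enough that $\ker(\chi)$ is proper. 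This gives $\St^T_\sE(e) \subsetneq T_e$ for all $e \in \sE \setminus \sX$, which is exactly condition needed. Note also that $\sE$ is quasi-compact since $\sX$ is and $\pi$ is representable of finite type (the cone of a coherent complex), and $0 : \sX \to \sE$ is a closed immersion. Then \thmref{thm:equiconcat} applies verbatim and yields that $0_* : \D(\sX)\inv \to \D(\sE)\inv$ is an equivalence.

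The main subtlety I expect is bookkeeping around the derived structure of $\sE = \V_\sX(\cE)$ when $\cE$ is a genuine complex rather than a vector bundle: one must check that "geometric points of $\sE \setminus \sX$" are correctly described by nonzero vectors in $H_0(\cE_x)$, and that the $T$-stabilizer computation is insensitive to the higher homotopy of $\cE$. Here I would invoke that a geometric point of a derived stack factors through its classical truncation, and the classical truncation of $\V_\sX(\cE)$ is $\V_\sX(H_0(\cE))$, reducing us to the case of a coherent sheaf (ordinary module), where the weight-space argument is transparent. A secondary point is to confirm that $\St^T$ is computed correctly through the possibly-stacky $X$: since we only assumed $X$ has finite stabilizers (not trivial ones), at a point $x$ the automorphism group $\uAut_\sX(x)$ surjects onto a finite subgroup — but the $T$-action on $X$ being trivial means the map $\alpha : \uAut_{\sE}(e) \to T_e$ factors through the linear action on $\cE_x$ as above, so finiteness of the $X$-stabilizer does not interfere. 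Everything else is a direct citation of \thmref{thm:equiconcat}.
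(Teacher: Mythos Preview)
Your approach is the same as the paper's: reduce to the case where $\cE$ is a coherent sheaf (the paper invokes ``derived invariance'' where you pass to the classical truncation---these are the same move), then verify the stabilizer hypothesis of \thmref{thm:equiconcat} for the zero section. The paper delegates that verification to \cite[Prop.~3.8]{Virloc}, whereas you sketch the weight-space argument directly.

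There is, however, a gap in your treatment of the finite stabilizers. The containment $\St^T_\sE(e) \subseteq \ker(\chi)$ is not true as stated. An element of $\uAut_\sE(e)$ mapping to $t \in T_e$ comes with an isomorphism $e \simeq t\cdot e$ in $E$, which projects to some $\gamma \in \uAut_X(x)$; since the $T$-action and the $\uAut_X(x)$-action on $\cE_x$ commute, comparing $\chi$-components yields $\chi(t)\,v_\chi = \gamma^{-1}\cdot v_\chi$, so $\chi(t)$ is a root of unity (because $\gamma$ has finite order), not necessarily $1$. The correct conclusion is that $\chi(\St^T_\sE(e))$ is a \emph{finite} subgroup of $\Gm$, which then forces $\St^T_\sE(e) \subsetneq T_e$ once $\chi$ has infinite image (e.g.\ when $T$ is a split torus so that nontrivial characters are non-torsion). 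In other words, the finite stabilizers of $X$ do not ``not interfere''---they genuinely enlarge the $T$-stabilizer beyond $\ker(\chi)$---but it is precisely their finiteness that keeps this enlargement under control. This is the actual content of \cite[Prop.~3.8]{Virloc}, and your proposal becomes complete once this step is argued correctly.
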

    \begin{proof}
      By derived invariance, we may assume that $\cE$ is $0$-truncated (i.e., a coherent sheaf).
      Then \cite[Prop.~3.8]{Virloc} shows that the conditions of \thmref{thm:equiconcat} are satisfied under our assumptions.
    \end{proof}

  \ssec{Categorical localization formula}

    When $i$ is quasi-smooth, the ``categorical localization formula'' computes the inverses of the isomorphism $\id \to i_*^\Sigma i^*_\Sigma$ in terms of the trace:

    \begin{thm}[Categorical localization formula]\label{thm:catlocsm}
      Let $X \in \Stk$ be and $Z \to X$ a $T$-equivariant closed immersion such that $\St^T_X(x) \subsetneq T_x$ for every geometric point $x \in X \setminus Z$, and such that the $T$-action on $Z$ is trivial.
      Denote by $i : \sZ \to \sX$ the induced morphism between $\sZ := [Z/T] \simeq Z \times BT$ and $\sX := [X/T]$.
      If $Z \to X$ is quasi-smooth, $Z$ has finite stabilizers, and $\cN_{\sZ/\sX}$ has no fixed part, then the natural transformation of endofunctors of $\D(\sX)\inv$
      \[
        i_*^\Sigma i^*_\Sigma
        \xrightarrow{\eul^{-1}} i_*^\Sigma i^*_\Sigma \vb{-c}
        \xrightarrow{\tr^\Sigma_i} \id,
      \]
      where $\eul^{-1}$ is an inverse of $\eul_{\cN_{\sZ/\sX}}^\Sigma$, is an inverse to the unit $\id \to i_*^\Sigma i^*_\Sigma$.
    \end{thm}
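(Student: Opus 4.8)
The plan is to deduce the formula from $T$-equivariant categorical concentration (\thmref{thm:equiconcat}) together with the self-intersection formula for the quasi-smooth closed immersion $i$, categorifying the proof of the virtual localization formula in Borel--Moore homology \cite[Thm.~D]{Virloc}. I would first check that the hypotheses make every ingredient well-defined and reduce to a one-sided identity. Since $X\setminus Z$ is $\Sigma$-admissible and quasi-compact, \thmref{thm:equiconcat} gives that $i_*^\Sigma$ is an equivalence; as $i$ is proper, its left adjoint $i^*_\Sigma$ is then a two-sided inverse, so the unit $u:\id_{\D(\sX)\inv}\to i_*^\Sigma i^*_\Sigma$ is an isomorphism. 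Since $Z$ has finite stabilizers and $\cN_{\sZ/\sX}$ has no fixed part, \corref{cor:unretentive} (applied to $\V_\sZ(\cN_{\sZ/\sX})$) shows that $\eul^\Sigma_{\cN_{\sZ/\sX}}$ is invertible, so $\eul^{-1}$ and $\beta:=\tr^\Sigma_i\circ\eul^{-1}$ make sense; indeed $\eul^{-1}$ is, by construction, the inverse of the natural transformation $(i_*^\Sigma\,\eul^\Sigma_{\cN_{\sZ/\sX}}\,i^*_\Sigma)\vb{-c}:i_*^\Sigma i^*_\Sigma\vb{-c}\to i_*^\Sigma i^*_\Sigma$. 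Because $u$ is already an isomorphism, it suffices to show $u\circ\beta\simeq\id_{i_*^\Sigma i^*_\Sigma}$.

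The key input, valid \emph{before} $\Sigma$-localization, is the self-intersection formula: the composite of natural transformations
\[
  i_*i^*\vb{-c}\xrightarrow{\ \tr_i\ }\id_{\D(\sX)}\xrightarrow{\ \mathrm{unit}\ }i_*i^*
\]
equals multiplication by the Euler class, namely $(i_*\,\eul_{\cN_{\sZ/\sX}}\,i^*)\vb{-c}$. Evaluated on the unit object $\un_\sX$ this is the classical self-intersection identity $i^*i_*(1)=e(\cN_{\sZ/\sX})$ combined with the description of $\eul_{\cN_{\sZ/\sX}}$ in \eqref{eq:proboscidial}; the general (coefficient) statement then follows because $\tr_i$ and the unit are compatible with the projection formula for $i_*=i_!$, so that the displayed transformation on $\cF$ is identified with $i^*\cF\otimes(\text{its value on }\un_\sX)$. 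Alternatively one proves the formula by deformation to the normal cone, reducing to the zero section $0:\sZ\to\V_\sZ(\cN_{\sZ/\sX})$, where $\pi\circ 0=\id$ makes it immediate from the construction of the trace; cf.\ \cite[\S 4.3]{DegliseJinKhan}, \cite[Rem.~3.8]{KhanVirtual}.

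Granting this, I would conclude by applying $L_\Sigma$: the defining isomorphisms $i_*^\Sigma L_\Sigma\simeq L_\Sigma i_*$, $i^*_\Sigma L_\Sigma\simeq L_\Sigma i^*$, $\tr^\Sigma_i L_\Sigma\simeq L_\Sigma\tr_i$, $u L_\Sigma\simeq L_\Sigma u$ together with the descent of the Euler transformation turn the self-intersection formula into $u\circ\tr^\Sigma_i\simeq(i_*^\Sigma\,\eul^\Sigma_{\cN_{\sZ/\sX}}\,i^*_\Sigma)\vb{-c}$ as natural transformations $i_*^\Sigma i^*_\Sigma\vb{-c}\to i_*^\Sigma i^*_\Sigma$; by the first paragraph the right-hand side is precisely $(\eul^{-1})^{-1}$. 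Composing on the right with $\eul^{-1}$ then gives $u\circ\beta=u\circ\tr^\Sigma_i\circ\eul^{-1}\simeq\id_{i_*^\Sigma i^*_\Sigma}$, and since $u$ is an isomorphism this yields $\beta\simeq u^{-1}$, which is the assertion.

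The step I expect to be the main obstacle is the self-intersection formula in the ambient generality of the weave $\D$ — in particular pinning down the trace $\tr_i$ (defined via purity) precisely enough to recognize $u\circ\tr_i$ as multiplication by $\eul_{\cN_{\sZ/\sX}}$, and checking that this identity survives $\Sigma$-localization with the correct Thom twists. Once that compatibility is in place (or quoted from \cite{DegliseJinKhan,KhanVirtual,Virloc}), the remaining steps are either the already-established concentration theorem or formal bookkeeping with the $\Sigma$-localized six operations.
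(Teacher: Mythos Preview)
Your proposal is correct and follows essentially the same approach as the paper: the self-intersection formula (stated in the paper as \propref{prop:ethnogeographically}, derived from \cite[Cor.~3.17]{KhanVirtual}) identifies $u\circ\tr_i^\Sigma$ with $i_*^\Sigma\,\eul^\Sigma_{\cN_{\sZ/\sX}}\,i^*_\Sigma$, and concentration (\thmref{thm:equiconcat}) finishes the argument. Your organization is in fact slightly leaner than the paper's: by invoking concentration at the outset to know $u$ is invertible, you only need the single identity $u\circ\beta\simeq\id$, whereas the paper checks both composites separately, using an auxiliary consequence of the self-intersection formula (\corref{cor:buttle}) together with concentration for the second one.
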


    In view of the invertibility of $\eul_{\cN_{Z/X}}$ on $\D(\sZ)\inv$, \thmref{thm:catlocsm} is a consequence of the following categorification of the self-intersection formula:

    \begin{prop}[Categorical self-intersection formula]\label{prop:ethnogeographically}
      Let $i : \sZ \to \sX$ be a quasi-smooth closed immersion in $\Stk$.
      Then the composite natural transformation
      \[
        i_* i^* \vb{-c}
        \xrightarrow{\tr_i} \id
        \xrightarrow{\mrm{unit}} i_* i^*
      \]
      is canonically identified with $\eul_{\cN_{\sZ/\sX}} : i_* i^* \vb{-c} \to i_* i^*$.
    \end{prop}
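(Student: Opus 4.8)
The statement is a categorical lift of the classical self-intersection formula $i^*i_*\un_{\sZ} \simeq \un_{\sZ} \otimes \lambda_{-1}(\cN_{\sZ/\sX}^\vee)$ (or rather its Euler-class version in an oriented theory), so the plan is to reduce to a purely local computation on the normal bundle via deformation to the normal cone, exactly as one proves the excess intersection / self-intersection formulas in a six-functor setting. First I would recall that, since $i$ is a quasi-smooth closed immersion, there is a canonical \emph{deformation to the normal bundle}: a space $D_{\sZ/\sX}$ over $\A^1$ with generic fibre $\sX$ and special fibre $\sE := \V_\sZ(\cN_{\sZ/\sX})$, together with a compatible closed immersion $\sZ\times\A^1 \hook D_{\sZ/\sX}$ whose special fibre is the zero section $0 : \sZ \hook \sE$. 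Using $\A^1$-homotopy invariance (the second topological-weave axiom) one shows that the natural transformation obtained by composing $\tr_i$ with the unit is identified, compatibly, with the corresponding composite for $0 : \sZ \hook \sE$ — the point being that $\tr$ is stable under base change along the arms of the deformation and the family is constant up to $\A^1$-equivalence. This is the standard argument for the self-intersection formula (see \cite[4.3.1]{DegliseJinKhan}, \cite{DegliseJinKhan}); the only subtlety in the stacky setting is that $i$ need not be representable, but $\tr_i$, the unit, and $\eul$ are all defined via the six operations on the lisse-extended weave, and the deformation space is again in $\bS$, so the argument goes through formally.

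Having reduced to the case of the zero section $0 : \sZ \hook \sE$ of a vector bundle $\pi : \sE \to \sZ$ (of rank $c = \rk \cN_{\sZ/\sX}$), the computation becomes explicit. Here $0$ is regular of codimension $c$ with conormal bundle $0^*\cN_{\sZ/\sE} \simeq \cN_{\sZ/\sX}$, so Poincaré duality gives $0^! \simeq 0^*\vb{-c}$, hence $0_*0^*\vb{-c} \simeq 0_*0^!$ and the trace $\tr_0 : 0_*0^*\vb{-c} = 0_*0^! \to \id$ is just the counit of the $(0_!, 0^!)=(0_*,0^!)$ adjunction. Thus the composite $0_*0^*\vb{-c} \xrightarrow{\tr_0} \id \xrightarrow{\mathrm{unit}} 0_*0^*$ is the map $0_* 0^! \to 0_* 0^*$ induced on $0_*$ from the canonical \emph{fundamental class / purity} map $0^! \to 0^*$, which we must identify with $\eul_{\cN}$. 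By the projection formula and the self-intersection identity $0^*0_* \simeq \id \otimes 0^*\cN_{\sZ/\sE}[-\ast]$-type decomposition (or more cleanly: apply $\pi_!$, use $\pi_! 0_! \simeq \id$ and $\pi^* $ fully faithful by homotopy invariance), this reduces to identifying a single endomorphism of $\un_{\sZ}$, namely the composite
\[
  \un_\sZ \xrightarrow{\ \mathrm{unit}\ } 0^!0_!\un_\sZ\vb{c} = 0^*0_*\un_\sZ\vb{c} \xrightarrow{\ \mathrm{counit}\ } \un_\sZ\vb{c},
\]
with $c_c(\cN_{\sZ/\sX}) = \eul_{\cN_{\sZ/\sX}}$. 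But this is precisely the description of the Euler transformation recalled just before the statement — $\eul_{\cN_{\sZ/\sX}}$ is \emph{defined} as $\id \simeq \pi_!\pi^*\vb{c} \xrightarrow{\mathrm{unit}} \pi_!0_!0^*\pi^*\vb{c} \simeq \vb{c}$ — so the two agree by construction once one unwinds the adjunctions (this is where the orientation of $\D$, giving $0^!(\un) \simeq \un\vb{c}$, enters, and where one invokes \cite[Rem.~3.8]{KhanVirtual} and \cite[4.3.1]{DegliseJinKhan} for the compatibility of the trace with the Thom isomorphism).

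The main obstacle, as usual with this kind of statement, is not the final bundle computation but the \textbf{coherence} of the deformation-to-the-normal-bundle argument: one must produce the identification of the composite $\mathrm{unit}\circ\tr_i$ with $\eul_{\cN_{\sZ/\sX}}$ as natural transformations, compatibly in the $\A^1$-family, not merely on objects or $\pi_0$. Concretely this means checking that the trace $\tr$, the Thom/purity isomorphisms, and the localization triangles are functorial along the base-change maps $\sX \hookleftarrow \sX\times\Gm \hookrightarrow D_{\sZ/\sX}\setminus \sE \to \cdots$ and that $\A^1$-invariance is an equivalence of \emph{functors} (which it is, by the weave axiom). I would handle this by citing the construction of $\tr$ and its compatibilities from \cite{DegliseJinKhan,KhanVirtual} — where the relevant naturality is already established for schemes — and then transporting it to stacks via the lisse extension, using that all the operations involved ($0_*$, $\pi_!$, $\pi^*$, $\vb{c}$) commute with the smooth-descent limits defining $\D$ on stacks. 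The rest is then a diagram chase in an $\infty$-category of natural transformations, using \eqref{eq:discouragingness}-style descent only insofar as everything here lives before $\Sigma$-localization, so no localized subtleties arise.
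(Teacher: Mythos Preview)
Your approach is correct and is essentially the argument the paper has in mind: the paper's own proof is the one-line citation ``easily derived from \cite[Cor.~3.17]{KhanVirtual}; see also \corref{cor:antilacrosse}'', and the more explicit argument in \corref{cor:antilacrosse} unwinds $\tr_i$ as $\sp_i\circ(\tr_\pi)^{-1}$ and appeals to the self-intersection instance of \propref{prop:sp-counit} (i.e.\ \corref{cor:sp-counit:Self-intersection}) to identify $\mrm{unit}\circ\sp_i$ with the zero-section unit --- which is exactly your ``reduce to the zero section via deformation to the normal cone, then compute'' strategy, phrased in terms of the specialization transformation rather than an informal base-change-along-the-family argument.

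Two small remarks. First, your worry that ``$i$ need not be representable'' is moot here: closed immersions are representable, and the weave only defines $!$-operations for representable morphisms anyway. Second, your step~1 as written (``the composite for $i$ is identified with the composite for $0$'') is slightly imprecise since the two composites live over different bases $\sX$ and $\sE$; the clean way to say this --- and the way the paper does it --- is not to pass to the special fibre wholesale but to use the definition $\tr_i = \sp_i\circ(\tr_\pi)^{-1}$ and the diagram of \corref{cor:sp-counit:Self-intersection}, which compares $\mrm{unit}\circ\sp_i$ directly with the zero-section unit $\pi_!\pi^* \to \pi_!0_!0^*\pi^*$ \emph{on $\D(\sX)$}. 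Once you rewrite your reduction in those terms, the coherence issues you flag largely dissolve, as everything is packaged into the functoriality statement \propref{prop:sp-counit}.
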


    This statement is easily derived from \cite[Cor.~3.17]{KhanVirtual}; see also \corref{cor:antilacrosse} below.

    \begin{cor}\label{cor:buttle}
      For $i$ as in \propref{prop:ethnogeographically}, the composite
      \[
        i^* \vb{-c}
        \xrightarrow{i^* \mrm{unit}} i^*i_*i^*\vb{-c}
        \xrightarrow{i^* \tr_i} i^*
      \]
      is canonically identified with $\eul_{\cN_{\sZ/\sX}} : i^*\vb{-c} \to i^*$.
    \end{cor}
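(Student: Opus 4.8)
The plan is to deduce \corref{cor:buttle} formally from \propref{prop:ethnogeographically} using only the triangle identities of the adjunction $i^*\dashv i_*$; there is no new geometric input. Write $\mrm{unit}:\id_{\D(\sX)}\to i_*i^*$ and $\mrm{counit}:i^*i_*\to\id_{\D(\sZ)}$ for the unit and counit of this adjunction. I would also make explicit the meaning of the Euler transformations in play: the transformation ``$\eul_{\cN_{\sZ/\sX}}:i_*i^*\vb{-c}\to i_*i^*$'' appearing in \propref{prop:ethnogeographically} is by construction the whiskering $i_*\ast\eul_{\cN_{\sZ/\sX}}\ast i^*$ of the basic Euler transformation $\eul_{\cN_{\sZ/\sX}}:\vb{-c}\to\id_{\D(\sZ)}$ of \eqref{eq:proboscidial}, and the target ``$\eul_{\cN_{\sZ/\sX}}:i^*\vb{-c}\to i^*$'' of \corref{cor:buttle} is the whiskering $\eul_{\cN_{\sZ/\sX}}\ast i^*$ (all twists being central, I will not dwell on commuting $\vb{-c}$ past $i^*$).

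First I would whisker the identity of \propref{prop:ethnogeographically} on the left by $i^*$: the composite $i^*i_*i^*\vb{-c}\xrightarrow{i^*\tr_i}i^*\xrightarrow{i^*\mrm{unit}}i^*i_*i^*$ is then identified with $(i^*i_*)\ast\eul_{\cN_{\sZ/\sX}}\ast i^*$. Post-composing both sides with $\mrm{counit}\ast i^*:i^*i_*i^*\to i^*$, the left side collapses to $i^*\tr_i$ by the triangle identity $(\mrm{counit}\ast i^*)\circ(i^*\ast\mrm{unit})=\id_{i^*}$, while the right side becomes $(\eul_{\cN_{\sZ/\sX}}\ast i^*)\circ(\mrm{counit}\ast i^*)\vb{-c}$ by the interchange law (naturality of $\mrm{counit}$ against $\eul_{\cN_{\sZ/\sX}}$). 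This yields the intermediate identity
\[
  i^*\tr_i \;\simeq\; (\eul_{\cN_{\sZ/\sX}}\ast i^*)\circ(\mrm{counit}\ast i^*)\vb{-c}
  \;:\; i^*i_*i^*\vb{-c}\to i^*.
\]

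Finally I would substitute this into the composite of \corref{cor:buttle}, which is $(i^*\tr_i)\circ(i^*\mrm{unit})\vb{-c}$: it becomes $(\eul_{\cN_{\sZ/\sX}}\ast i^*)\circ\bigl[(\mrm{counit}\ast i^*)\circ(i^*\ast\mrm{unit})\bigr]\vb{-c}$, and the bracketed factor is $\id_{i^*}$ once more by the triangle identity, so the composite equals $\eul_{\cN_{\sZ/\sX}}\ast i^*$, as claimed. I do not expect a genuine obstacle here: the only content is \propref{prop:ethnogeographically} itself, and everything else is a formal game with (co)units, whiskerings and the interchange law. The one point requiring care is bookkeeping --- keeping the Tate twist--shift $\vb{-c}$ in the right slot under the various whiskerings, and supplying the $\infty$-categorical coherences --- but this is routine. (Alternatively, since $i_*$ is fully faithful, one may apply the fully faithful functor $i_*\circ(-)$ on functor categories to both sides of the desired identity and reduce directly to \propref{prop:ethnogeographically}, using naturality of $\mrm{unit}$ and the fact that $i_*i^*$ is an idempotent localization; this is the same argument in different clothing.)
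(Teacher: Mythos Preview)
Your argument is correct and is essentially the paper's own proof: both whisker \propref{prop:ethnogeographically} by $i^*$, insert the triangle identity $(\mrm{counit}\ast i^*)\circ(i^*\ast\mrm{unit})=\id_{i^*}$, and use naturality of the counit against $\eul_{\cN_{\sZ/\sX}}$, the only difference being that the paper organises these steps into a single commutative diagram whereas you carry them out linearly.
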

    \begin{proof}
      By the triangle identities, it will suffice to identify the composite
      \[\begin{tikzcd}
        i^*\vb{-c}
        \xrightarrow{i^* \mrm{unit}} i^*i_*i^*\vb{-c}
        \xrightarrow{i^* \tr_i} i^*
        \xrightarrow{i^* \mrm{unit}} i^*i_*i^*
        \xrightarrow{\mrm{counit}(i^*)} i^*
      \end{tikzcd}\]
      since the last two arrows collapse to the identity.
      By \propref{prop:ethnogeographically} this is identified with the total composite in the commutative diagram below:
      \[\begin{tikzcd}
        i^*\vb{-c} \ar{r}{i^* \mrm{unit}}\ar[equals]{rd}
        & i^*i_*i^*\vb{-c} \ar{r}{\eul}\ar{d}{\mrm{counit}(i^*)}
        & i^*i_*i^* \ar{d}{\mrm{counit}(i^*)}
        \\
        & i^*\vb{-c} \ar{r}{\eul}
        & i^*,
      \end{tikzcd}\]
      whence the claim.
    \end{proof}

    \begin{proof}[Proof of \thmref{thm:catlocsm}]
      All operations are considered on $\D(-)\inv$ and we omit the decorations ``$\Sigma$'' from the notation.
      \propref{prop:ethnogeographically} yields an identification between the composite
      \[
        i_*i^*
        \xrightarrow{\eul^{-1}} i_*i^*\vb{-c}
        \xrightarrow{\tr_i} \id
        \xrightarrow{\mrm{unit}} i_*i^*
      \]
      and the identity.
      The other composite
      \[
        \id
        \xrightarrow{\mrm{unit}} i_*i^*
        \xrightarrow{\eul^{-1}} i_*i^* \vb{-c}
        \xrightarrow{\tr_i} \id,
      \]
      which is equivalent to
      \[
        \id
        \xrightarrow{\eul^{-1}} \vb{-c}
        \xrightarrow{\mrm{unit}} i_*i^* \vb{-c}
        \xrightarrow{\tr_i} \id,
      \]
      is identity after applying $i^*$ on the left by \corref{cor:buttle}.
      Since $i^*$ is an equivalence (\thmref{thm:equiconcat}), the claim follows.
    \end{proof}

\section{Virtual Euler and trace}

  Our next goal is to extend the categorical localization formula (\thmref{thm:catlocsm}) to the case of \emph{quasi-}smooth stacks.
  Since the closed immersion $i : \sZ \to \sX$ need no longer be quasi-smooth, we begin in this section by making sense of the trace $\tr_i$ as well as the Euler transformation $\eul_{\cN_{\sZ/\sX}}$ associated with the perfect complex $$\cN_{\sZ/\sX} = \cL_{\sZ/\sX}[-1] \in \Dperf^{[-1,0]}(\sX).$$

  We maintain the notation $T$ and $\Sigma \sub \Pic(BT)$ of the previous section. 
  Throught this section and in the rest of this paper we work under the assumption that all our stacks have affine stabilizers.

  \ssec{Quasi-smooth bundles}

    Given a perfect complex $\cE \in \Dperf(X)$, the associated ``generalized vector bundle'' $\pi : E = \V_X(\cE) \to X$ is the derived stack with functor of points
    \[ (T \xrightarrow{t} X) \mapsto \Maps_{\Dperf(X)}(t^*\cE, \cO_T). \]
    In \secref{sec:locfrmlsm} we made use of the fact that when $\cE$ is of Tor-amplitude $\le 0$ and virtual rank $r$, the projection $\pi$ is smooth so that we have an invertible natural transformation
    \[
      \tr_\pi : \pi_!\pi^*\vb{r} \to \id,
    \]
    see \cite[Prop.~A.10, Thm.~A.13, Constr.~A.16]{KhanVirtual}.

    Now suppose $\cE$ is a perfect complex on $\sX := X\times BT$, i.e., a perfect complex on $X$ which is equivariant with respect to the trivial $T$-action.
    Suppose that $\cE$ is of Tor-amplitude $\le 1$, so that $\pi : \V_\sX(\cE) \to \sX$ is only \emph{quasi-}smooth.
    We still have the trace (see \cite[\S 3.1, Rem.~3.8]{KhanVirtual})
    \begin{equation*}
      \tr_\pi : \pi_!\pi^*\vb{r} \to \id
    \end{equation*}
    but it is typically not invertible.
    However, we will show that this becomes true $\Sigma$-locally, as long as the fixed part $\cE^{\fix}$ is of Tor-amplitude $\le 0$.

    \begin{prop}\label{prop:Htpy-Inv}
      Let $X \in \Stk$ be quasi-compact and $\sX = X \times BT$.
      Let $\cE \in \Dperf^{\ge -1}(\sX)$ of virtual rank $r$ with $\cE^\fix\in\Dperf^{\ge 0}(\sX)$.
      If $X$ has finite stabilizers, then the trace of $\pi : \V_\sX(\cE) \to \sX$ induces an isomorphism
      \[
        \tr_\pi : \pi^\Sigma_!\pi_\Sigma^*\vb{r} \to \id
      \]
      of endofunctors of $\D(\sX)\inv$.
    \end{prop}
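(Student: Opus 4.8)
The plan is to factor $\pi$ as a smooth bundle followed by a quasi-smooth closed immersion, and then to combine the categorical self-intersection formula (\propref{prop:ethnogeographically}) with equivariant concentration (\thmref{thm:equiconcat}); compatibility of the trace with composition and base change, together with \eqref{eq:discouragingness}, lets me move freely between $\D(-)$ and $\D(-)\inv$. First I would reduce to $\cE$ having no fixed part: writing $\cE \simeq \cE^\fix \oplus \cE^\mov$ gives $\V_\sX(\cE) \simeq \V_\sX(\cE^\fix) \times_{\sX} \V_\sX(\cE^\mov)$, so $\pi$ factors through $\pi^\fix : \V_\sX(\cE^\fix) \to \sX$, which is smooth since $\cE^\fix \in \Dperf^{\ge 0}$; as $\tr_{\pi^\fix}$ is invertible by \cite[Thm.~A.13]{KhanVirtual} and $(\pi^\fix)^*$ is fully faithful, $\tr_\pi$ is invertible on $\D(\sX)\inv$ iff $\tr_{\pi^\mov}$ is. Next, by the localization triangle (\lemref{lem:locseq}) and base-change compatibility of the trace, I may argue by noetherian induction on $X$, and by \cite[Prop.~2.6]{HallRydhGroups} (as in the proof of \thmref{thm:acyclic}) reduce to $\sX = [X_0/(\GL_n \times T)]$ with $X_0$ quasi-affine. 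Such a stack has the resolution property, so — decomposing $\cE$ along its finitely many, necessarily nonzero $T$-weights and resolving each weight space by vector bundles — I may present $\cE \simeq [\cV^{-1} \to \cV^0 \to \cdots \to \cV^N]$ with all $\cV^i$ vector bundles having no fixed part.

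Let $\cG := [\cV^0 \to \cdots \to \cV^N] \in \Dperf^{\ge 0}$, a subcomplex of $\cE$ with quotient $\cV^{-1}[1]$. Then $\pi_A : A := \V_\sX(\cG) \to \sX$ is smooth, and $E := \V_\sX(\cE) \simeq A \times_{\V_\sX(\cV^{-1})} \sX$, so $\pi$ factors as $E \xrightarrow{j} A \xrightarrow{\pi_A} \sX$ where $j$ is the base change of the zero section $\sX \hook \V_\sX(\cV^{-1})$ — a quasi-smooth closed immersion with conormal bundle $\cN_{E/A} \simeq \pi^*\cV^{-1}$, again a vector bundle with no fixed part. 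Since $\tr_{\pi_A}$ is invertible and, by compatibility of traces with composition, $\tr_\pi$ is the composite of $\tr_{\pi_A}$ with a whiskering of $\tr_j$, it suffices to show that $\tr_j : j_!^\Sigma j^*_\Sigma \vb{-c} \to \id$ (with $c = \rk \cV^{-1}$) is invertible on $\D(A)\inv$.

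By \propref{prop:ethnogeographically} (and \eqref{eq:discouragingness}), the composite $j_!^\Sigma j^*_\Sigma \vb{-c} \xrightarrow{\tr_j} \id \xrightarrow{\mrm{unit}} j_!^\Sigma j^*_\Sigma$ is identified with the whiskering of $\eul^\Sigma_{\cN_{E/A}}$, so it is enough that (a) the unit $\id \to j_!^\Sigma j^*_\Sigma$ and (b) $\eul^\Sigma_{\cN_{E/A}} : \id_{\D(E)\inv} \to \vb{c}$ be invertible. For (a): by \thmref{thm:concat} this amounts to $A \setminus E$ being $\Sigma$-admissible; but $A \setminus E$ is the preimage, under the morphism $A \to \V_\sX(\cV^{-1})$ defining the pullback, of the complement of the zero section — which is $\Sigma$-admissible by \cite[Prop.~3.8]{Virloc} (as in the proof of \corref{cor:unretentive}, using that $\cV^{-1}$ has no fixed part and $X$ has finite stabilizers), and $\Sigma$-admissibility is inherited by preimages along $BT$-morphisms. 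For (b): the total space $\V_E(\cN_{E/A})$, presented as a quotient by $T$ of a vector bundle with no fixed part over $\V_X(\cE)$ (which has finite stabilizers since $X$ does), has $\Sigma$-admissible complement of its zero section by \cite[Prop.~3.8]{Virloc}; hence \thmref{thm:equiconcat} shows that the zero-section pushforward $\D(E)\inv \to \D(\V_E(\cN_{E/A}))\inv$ is an equivalence, so the unit $\id \to 0_*0^*$ is $\Sigma$-locally invertible and so is $\eul^\Sigma_{\cN_{E/A}}$, assembled from it as in \secref{sec:locfrmlsm}.

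The main difficulty I anticipate is bookkeeping rather than conceptual: getting the trace identities (composition, base change, and the self-intersection formula) to descend coherently to the $\Sigma$-localized categories with all twists matching, and — for the admissibility inputs of (a) and (b) — verifying that vanishing of the fixed parts of the $\cV^i$ really forces the relevant $T$-stabilizers to be proper. This last point is exactly where the hypothesis that $X$ has finite stabilizers enters, via \cite[Prop.~3.8]{Virloc}.
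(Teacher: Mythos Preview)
Your approach is correct and genuinely different from the paper's.  After the common reductions (no fixed part, resolution property), the paper does \emph{not} factor $\pi$ through a quasi-smooth closed immersion.  Instead it reduces to a two-term presentation $\cE = \Cofib(\cE^{-1}\to\cE^0)$, forms the cartesian square
\[
\begin{tikzcd}
\sE \ar{r}{\pi}\ar{d}{s} & \sX \ar{d}{0_{\sE_1}}\\
\sE_0 \ar{r}{p} & \sE_1,
\end{tikzcd}
\]
and uses \emph{base-change compatibility of the trace}: since $0_{\sE_1,!}$ is an equivalence by \corref{cor:unretentive}, invertibility of $\tr_\pi$ is equivalent to invertibility of $\tr_p$, which follows from the smooth cases $\tr_{\pi_{\sE_0}}$, $\tr_{\pi_{\sE_1}}$ by functoriality.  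Your route instead invokes the self-intersection formula \propref{prop:ethnogeographically} to write $\mrm{unit}\circ\tr_j$ as an Euler transformation, and then checks two separate $\Sigma$-admissibility statements.  Both arguments ultimately rest on the same geometric input (\corref{cor:unretentive}, i.e.\ \cite[Prop.~3.8]{Virloc}), but yours trades one base-change identity for the self-intersection formula plus two concentration checks; the paper's is shorter, yours makes the mechanism (``unit and Euler are separately invertible'') more transparent and is closer in spirit to how \thmref{thm:catloc} is later proved.

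Two small points to tighten.  First, in the reduction to no fixed part you factor through $\pi^\fix$ but conclude about $\tr_{\pi^\mov}$; the correct factorization (and the one the paper uses) is $\sE \to \sE^\mov \to \sX$, where the first map is an $\sE^\fix$-torsor with invertible trace by \cite[Prop.~A.10]{KhanVirtual}.  Second, and more substantively: with your multi-term resolution the stack $A=\V_\sX(\cG)$ for $\cG\in\Dperf^{[0,N]}$ is in general $N$-Artin, so for $N\ge 2$ it falls outside $\Stk$ as defined in \ssecref{ssec:convent}; then \propref{prop:ethnogeographically} and \thmref{thm:concat} do not apply to $j:E\hookrightarrow A$ as stated, and even $\D(A)\inv$ and $j_!^\Sigma$ are not set up in \ssecref{ssec:weaves/sigmalocal}.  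You should therefore first reduce to $\cE\in\Dperf^{[-1,0]}$ by the same induction on Tor-amplitude the paper uses (citing \cite[Prop.~A.10]{KhanVirtual}); after that your argument goes through with $\cG=\cV^0$ a vector bundle and $A$ representable over $\sX$.
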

    \begin{proof}
      Note that $\pi : \sE := \V_\sX(\cE) \to \sX$ factors through $\pi^\mov : \sE^\mov \to \sX$ and $\sE \to \sE^\mov$, which is a torsor under the vector bundle stack $\pi^\fix : \sE^\fix \to \sX$.
      By homotopy invariance for vector bundle stacks \cite[Prop.~A.10]{KhanVirtual} we may replace $\cE$ by $\cE^\mov$ and assume that $\cE$ has no fixed part.

      Using the localization triangle and stratifying $X$ by global quotient stacks, we may assume that $X$ has the resolution property.
      Arguing as in the proof of \cite[Prop.~A.10]{KhanVirtual} by induction on the Tor-amplitude of $\cE$, we reduce to the case where
      \[
        \cE = \Cofib(\cE^{-1} \to \cE^0) \in \Dperf^{[-1,0]}(\sX)
      \]
      with $\cE^{-1}, \cE^0 \in \Dperf^{[0,0]}(\sX)$.
      We have a commutative diagram
      \begin{equation}\label{eq:HI}
        \begin{tikzcd}
          \sX \ar{r}{0}\ar[swap]{rd}{0_{\sE_0}}
          & \sE \ar{r}{\pi}\ar{d}{s}
          & \sX \ar{d}{0_{\sE_1}}
          \\
          & \sE_0 \ar{r}{p}\ar[swap]{rd}{\pi_{\sE_0}}
          & \sE_1 \ar{d}{\pi_{\sE_1}}
          \\
          & & \sX
        \end{tikzcd} 
      \end{equation}
      where $\sE_0 = \V_\sX(\cE^0)$ and $\sE_1 = \V_\sX(\cE^{-1})$ and the square is homotopy cartesian.

      All operations below are considered on $\D(-)\inv$ and we omit the decorations ``$\Sigma$''.
      Since $\cE_1$ has no fixed part and $X$ has finite stabilizers, $0_{\sE_1}^*$ and $0_{\sE_1,!}$ are equivalences (\corref{cor:unretentive}), so it will suffice to show that
      \[
        \tr_\pi : 0_{\sE_1,!}\pi_!\pi^*\vb{r} \to 0_{\sE_1,!}
      \]
      is invertible.
      Under the base change isomorphism, this is identified with (see \cite[Cor.~ 2.5.6]{DegliseJinKhan})
      \[
        \tr_p : p_!p^! 0_{\sE_1,!} \vb{r} \to 0_{\sE_1,!}.
      \]
      Thus it is sufficient to show that $\tr_p : p_!p^*\vb{r} \to \id$ is invertible.

      Since $0_{\sE_1}^*$ and $0_{\sE_1,!}$ are equivalences, the same holds for $\pi_{\sE_1}^*$ and $\pi_{\sE_1,!}$ by functoriality.
      Thus it will suffice to show that
      \[
        \pi_{\sE_0,!}\pi_{\sE_0}^*\vb{r}
        \simeq \pi_{\sE_1,!}p_!p^*\pi_{\sE_1}^*\vb{r}
        \xrightarrow{\tr_p} \pi_{\sE_1,!}\pi_{\sE_1}^*
      \]
      is invertible.
      By functoriality of the trace, this follows from the invertibility of $\tr_{\sE_0}$ and $\tr_{\sE_1}$ (which holds since $\cE_0$ and $\cE_1$ are of Tor-amplitude $[0,0]$, see the discussion above).
    \end{proof}

  \ssec{Virtual Euler and trace}

    We use \propref{prop:Htpy-Inv} to define generalizations of the Euler and trace transformations at the level of localized equivariant derived categories.

    \begin{constr}[Euler transformation]\label{constr:postcarotid}
      Let $X \in \Stk$ be quasi-compact and write $\sX = X\times BT$.
      Let $\cE \in \Dperf^{[-1,0]}(\sX)$ of virtual rank $r$, with $\cE^\fix\in\Dperf^{[0,0]}(\sX)$.
      The \emph{Euler transformation} associated with $\cE$ is the natural transformation
      \[
        \eul_\cE^\Sigma : \id_{\D(\sX)\inv} \to \vb{r}
      \]
      of endofunctors of $\D(\sX)\inv$, defined as follows.
      Denote by $\pi : \V_\sX(\cE) \to \sX$ the projection and by $0 : \sX \to \V_\sX(\cE)$ the zero section.
      Note that the latter is a closed immersion since $\cE \in \Dperf^{\le 0}(\sX)$, so that the unit of the adjunction $(0_!^\Sigma, 0^*_\Sigma)$ gives rise to a canonical natural transformation
      \[
        \id \simeq \pi_!^\Sigma \pi^*_\Sigma \vb{r}
        \xrightarrow{\mrm{unit}} \pi_!^\Sigma 0_!^\Sigma 0^*_\Sigma \pi^*_\Sigma \vb{r}
        \simeq \vb{r}
      \]
      where the first isomorphism is the inverse of the trace (\propref{prop:Htpy-Inv}).
    \end{constr}

    \begin{prop}\label{prop:trophoderm}
      In the situation of \constrref{constr:postcarotid}, suppose moreover that $\cE^\fix \simeq 0$.
      Then the unit $\id \to 0_!^\Sigma 0^*_\Sigma$ is invertible.
      In particular, the Euler transformation $\eul_\cE^\Sigma : \id_{\D(\sX)\inv} \to \vb{r}$ is invertible.
    \end{prop}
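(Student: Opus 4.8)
The plan is to deduce both assertions from the equivariant acyclicity packaged in \corref{cor:unretentive}, together with purely formal manipulations of the $\Sigma$-localized operations. The one point that needs to be noticed is that the hypothesis $\cE\in\Dperf^{[-1,0]}(\sX)$ says precisely that $\cE$ is a bounded \emph{connective} coherent complex on $\sX=X\times BT$ (it is exactly this connectivity that makes the zero section $0:\sX\to\sE:=\V_\sX(\cE)$ a closed immersion, as already used in \constrref{constr:postcarotid}). Thus \corref{cor:unretentive} applies — its hypotheses hold, $X$ being quasi-compact with finite stabilizers and $\cE^\fix\simeq 0$ — and yields that the zero section induces an equivalence $0_*^\Sigma:\D(\sX)\inv\xrightarrow{\ \sim\ }\D(\sE)\inv$.

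First I would upgrade this to the statement about the unit. As $0$ is a closed immersion it is proper, so $0_!^\Sigma=0_*^\Sigma$; and the Verdier sequence of \lemref{lem:locseq} exhibits $0_*^\Sigma$ as fully faithful, i.e.\ the counit $0^*_\Sigma 0_*^\Sigma\to\id_{\D(\sX)\inv}$ is invertible. A fully faithful functor that is moreover essentially surjective — which $0_*^\Sigma$ is, being an equivalence by \corref{cor:unretentive} — is one half of an adjoint equivalence, whence the unit $\id_{\D(\sE)\inv}\to 0_*^\Sigma 0^*_\Sigma=0_!^\Sigma 0^*_\Sigma$ is invertible as well. This is the first assertion.

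For the ``in particular'', recall that by construction (\constrref{constr:postcarotid}) $\eul_\cE^\Sigma$ is the composite
\[
  \id_{\D(\sX)\inv}\xrightarrow{\ \sim\ }\pi_!^\Sigma\pi_\Sigma^*\vb{r}
  \xrightarrow{\ \pi_!^\Sigma(\mrm{unit})\ }\pi_!^\Sigma 0_!^\Sigma 0^*_\Sigma\pi_\Sigma^*\vb{r}
  \xrightarrow{\ \sim\ }\vb{r},
\]
in which the first isomorphism is the inverse of the trace (\propref{prop:Htpy-Inv}), the last uses $\pi\circ 0=\id_\sX$, and the middle arrow is $\pi_!^\Sigma$ applied to the unit of $(0^*_\Sigma,0_*^\Sigma)$ evaluated on $\pi_\Sigma^*(-)\vb{r}$. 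By the previous paragraph that unit is an isomorphism, hence so is the middle arrow, and therefore $\eul_\cE^\Sigma$ is an isomorphism. I do not expect any genuine obstacle here: the only thing to be careful about is the bookkeeping that lets one invoke \corref{cor:unretentive} for a complex of Tor-amplitude $[-1,0]$; everything after that is formal.
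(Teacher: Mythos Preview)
Your argument is correct and more direct than the paper's. You observe that $\cE\in\Dperf^{[-1,0]}(\sX)$ is in particular connective and coherent, so \corref{cor:unretentive} applies verbatim to give that $0_*^\Sigma$ is an equivalence; since $(0^*_\Sigma,0_*^\Sigma)$ is then an adjunction whose right adjoint is an equivalence, it is an adjoint equivalence and the unit is invertible. The invertibility of $\eul_\cE^\Sigma$ follows immediately from its definition.

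The paper instead proceeds by resolution: after stratifying to reduce to the case where $X$ has the resolution property, it writes $\cE\simeq\Cofib(\cE^{-1}\to\cE^0)$ with $\cE^{-1},\cE^0$ locally free and argues via the diagram~\eqref{eq:HI}, applying \thmref{thm:equiconcat} separately to the zero sections of $\sE_0$ and $\sE_1$. This implicitly requires the resolution to be chosen with $(\cE^{-1})^\fix=(\cE^0)^\fix=0$, which is possible since $T$ is diagonalizable and $\cE^\fix\simeq 0$ but is not spelled out. Your route via \corref{cor:unretentive} sidesteps this by using derived invariance to pass directly to the $0$-truncation $H^0(\cE)$ rather than a two-term resolution. (A minor simplification: the appeal to \lemref{lem:locseq} is unnecessary, since once $0_*^\Sigma$ is known to be an equivalence, any adjunction it sits in is automatically an adjoint equivalence.)
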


    \begin{proof}
      When $\cE$ is of Tor-amplitude $[0,0]$, i.e., $\pi : E := \V_\sX(\cE) \to \sX$ is a vector bundle, the zero section $0: \sX \to E$ satisfies the assumptions of \thmref{thm:equiconcat} by \cite[Prop.~3.8]{Virloc} since $\cE^\fix \simeq 0$ and $X$ has finite stabilizers.

      In general, we stratify $X$ by global quotient stacks and use the localization triangle to reduce to the case where $X$ has the resolution property.
      We can then write 
      \[ \cE \simeq \Cofib(\cE^{-1} \to \cE^0) \in \Dperf^{[-1,0]}(\sX) \]
      with $\cE^{-1}, \cE^0 \in \Dperf^{[0,0]}(\sX)$.
      We adopt the notation of \eqref{eq:HI}.
      Since $s^*$ and $s_!$ are equivalences by \thmref{thm:equiconcat} (we omit the decorations ``$\Sigma$'' throughout the proof), it will suffice to show that the lower horizontal arrow below is invertible:
      \[\begin{tikzcd}
        \id \ar{r}{\mrm{unit}}\ar{d}{\mrm{unit}}
        & 0_{E_0,!} 0_{E_0}^* \ar[equals]{d}
        \\
        s_!s^* \ar{r}{\mrm{unit}}
        & s_!0_!0^*s^*
      \end{tikzcd}\]
      But the lower horizontal and right-hand vertical arrows are invertible by \thmref{thm:equiconcat}, so the claim follows.
    \end{proof}

    \begin{constr}[Trace transformation]\label{constr:undeliberating}
      Let $X \in \Stk$ be quasi-compact with $T$-action, and let $Z \to X$ be a $T$-equivariant closed immersion where the $T$-action on $Z$ is trivial.
      Write $\sZ := [Z/T] \simeq Z \times BT$ and $\sX := [X/T]$.
      Assume that $\cL_{\sZ/\sX} \in \Dperf^{[-2,-1]}(\sZ)$, $\cL_{\sZ/\sX}^\fix \in \Dperf^{[-1,-1]}(\sZ)$, and $Z$ has finite stabilizers.
      Let $i : \sZ \to \sX$ denote the inclusion, $\pi : N_{\sZ/\sX} := \V_\sZ(\cN_{\sZ/\sX}) \to \sZ$ the derived normal bundle, and $c$ the virtual rank of $\cN_{\sZ/\sX} = \cL_{\sZ/\sX}[-1]$.

      Recall that there is a specialization transformation (see \cite[Constr.~3.1]{KhanVirtual}, \cite[Constr.~8.7]{Sixstack})
      \[ \mrm{sp}_i : i_!\pi_!\pi^*i^* \to \id \]
      defined by deformation to the derived normal bundle.
      It gives rise to a natural transformation $\mrm{sp}_i^\Sigma$ on localized functors by \eqref{eq:discouragingness}.
      The \emph{trace transformation}
      \begin{equation}\label{eq:dietotherapy}
        \tr_i^\Sigma : i_*^\Sigma i^*_\Sigma \vb{-c} \to \id
      \end{equation}
      of functors $\D(\sX)\inv \to \D(\sX)\inv$ is defined as the composite
      \[
        i_*^\Sigma i^*_\Sigma \vb{-c}
        \simeq i_*^\Sigma \pi_!^\Sigma \pi^*_\Sigma i^*_\Sigma
        \xrightarrow{\mrm{sp}_i^\Sigma} \id
      \]
      where the isomorphism $\pi_!^\Sigma \pi^*_\Sigma \simeq \vb{-c}$ is the inverse of $\tr_\pi$ (\propref{prop:Htpy-Inv}).
    \end{constr}

    \begin{rem}
      It is immediate from the constructions that if $\cE \in \Dperf^{[-1,0]}(\sX)$ in fact belongs to $\Dperf^{[0,0]}$, then the Euler transformation $\eul_\cE : \id \to \vb{r}$ on $\D(\sX)\inv$ (\constrref{constr:postcarotid}) is the restriction of the usual one on $\D(\sX)$.
      Similarly, if $i : \sZ \to \sX$ is in fact quasi-smooth (i.e., $\cL_{\sZ/\sX} \in \Dperf^{[-2,-1]}(\sZ)$ belongs to $\Dperf^{[-1,-1]}(\sZ)$), then the trace transformations on $\Sigma$-localized categories are the restrictions of the usual ones on unlocalized categories.
    \end{rem}

\section{Categorical virtual localization formula}
\label{sec:catloc}

  We now prove the categorical virtual localization formula, extending \thmref{thm:catlocsm} to the case of quasi-smooth stacks.

  Let $X\in\Stk$ be quasi-compact with $T$-action, and let $Z \to X$ be a $T$-equivariant closed immersion such that $Z$ contains all $T$-fixed points (that is, for every geometric point $x \in X \setminus Z$ we have $\St^T_X(x) \neq T_x$).
  Denote by $i : \sZ \to \sX$ the induced morphism between $\sZ := [Z/T]$ and $\sX := [X/T]$.
  The categorical concentration theorem (\thmref{thm:equiconcat}) implies that the unit $\id \to i_* L_\Sigma i^*$ and counit $i_*i^! \to \id$ are invertible on $\D(\sX)\inv$.
  The ``categorical localization formula'' computes their inverses in terms of the trace (\constrref{constr:undeliberating}), in the situation where the latter is defined:

  \begin{thm}[Categorical virtual localization formula]\label{thm:catloc}
    Let $X\in\Stk$ be quasi-compact with $T$-action, and let $Z \to X$ be a $T$-equivariant closed immersion  such that $\St^T_X(x) \subsetneq T_x$ for every geometric point $x \in X \setminus Z$, and such that the $T$-action on $Z$ is trivial.
    Denote by $i : \sZ \to \sX$ the induced morphism between $\sZ := [Z/T] \simeq Z \times BT$ and $\sX := [X/T]$.
    Assume that $\cL_{\sZ/\sX} \in \Dperf^{[-2,-1]}(\sZ)$, $\cL_{\sZ/\sX}^\fix \simeq 0$, and $Z$ has finite stabilizers.
    Then the natural transformation of endofunctors of $\D(\sX)\inv$
    \[
      i_*^\Sigma i^*_\Sigma
      \xrightarrow{\eul^{-1}} i_*^\Sigma i^*_\Sigma \vb{-c}
      \xrightarrow{\tr^\Sigma_i} \id,
    \]
    where $\eul^{-1}$ is an inverse of $\eul_{\cN_{\sZ/\sX}}^\Sigma$, is an inverse to the unit $\id \to i_*^\Sigma i^*_\Sigma$.
  \end{thm}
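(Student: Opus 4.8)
The plan is to repeat the argument proving \thmref{thm:catlocsm}, with the self-intersection formula \propref{prop:ethnogeographically} replaced by a virtual analogue. Throughout, all functors live on the $\Sigma$-localized categories and the decorations ``$\Sigma$'' are omitted. The proof of \thmref{thm:catlocsm} used only four inputs: the invertibility of $\eul_{\cN_{\sZ/\sX}}$; the fact that $i^*$ is an equivalence; the self-intersection formula identifying the composite $i_*i^*\vb{-c} \xrightarrow{\tr_i} \id \xrightarrow{\mrm{unit}} i_*i^*$ with $\eul_{\cN_{\sZ/\sX}}$; and \corref{cor:buttle}. Here the first holds by \propref{prop:trophoderm}, because the hypothesis $\cL_{\sZ/\sX}^\fix \simeq 0$ gives $\cN_{\sZ/\sX}^\fix \simeq 0$ with $\cN_{\sZ/\sX} \in \Dperf^{[-1,0]}(\sZ)$ and $Z$ of finite stabilizers; the second holds by \thmref{thm:equiconcat}, because $\St^T_X(x) \subsetneq T_x$ for every geometric point $x \in X \setminus Z$; and \corref{cor:buttle} was deduced from \propref{prop:ethnogeographically} by a formal diagram chase using only the triangle identities for $(i^*, i_*)$, so its virtual analogue will follow verbatim once the virtual self-intersection formula is proven. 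Granting all four, the two composites in the statement are seen to be mutually inverse exactly as in the last paragraph of the proof of \thmref{thm:catlocsm}: one composite is the identity by the virtual self-intersection formula, and the other, after moving $\eul^{-1}$ past $i_*i^*$ and applying the equivalence $i^*$ on the left, becomes the identity by the virtual analogue of \corref{cor:buttle}.

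It thus remains to prove the \textbf{virtual self-intersection formula}: the composite $i_*i^*\vb{-c} \xrightarrow{\tr_i} \id \xrightarrow{\mrm{unit}} i_*i^*$ of endofunctors of $\D(\sX)\inv$ is canonically identified with the Euler transformation $\eul_{\cN_{\sZ/\sX}}$ of \constrref{constr:postcarotid}. By construction (\constrref{constr:undeliberating}) $\tr_i$ is $\mrm{sp}_i^\Sigma$ composed with the inverse of the trace $\tr_\pi$ of \propref{prop:Htpy-Inv}, where $\pi : N_{\sZ/\sX} = \V_\sZ(\cN_{\sZ/\sX}) \to \sZ$ is the derived normal bundle. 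Deformation to the derived normal bundle (the formalism of \cite[Constr.~3.1]{KhanVirtual}, \cite[Constr.~8.7]{Sixstack}) supplies a compatibility between $\mrm{sp}_i$, the unit of $(i^*, i_*)$, and the corresponding data for the zero section $0 : \sZ \to N_{\sZ/\sX}$, whose derived normal bundle is again $\V_\sZ(\cN_{\sZ/\sX})$; I would use this to reduce the identity to the same identity for $0$ in place of $i$.

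For the zero-section case I would argue by the same dévissage as in the proofs of \propref{prop:Htpy-Inv} and \propref{prop:trophoderm}. Stratifying $\sZ = Z \times BT$ by locally closed substacks $Z_\alpha \times BT$ with $Z_\alpha$ having the resolution property, and using that the functors and transformations involved are exact and compatible with $*$-restriction and the localization sequences of \lemref{lem:locseq} (so that the identity of natural transformations may be checked stratum by stratum), one reduces to the case where $\cN_{\sZ/\sX} \simeq \Cofib(\cN^{-1} \to \cN^0)$ with $\cN^{-1}, \cN^0$ locally free. Factoring the zero section of $\V_\sZ(\cN_{\sZ/\sX})$ through the zero sections of the honest vector bundles $\V_\sZ(\cN^0)$ and $\V_\sZ(\cN^{-1})$ as in diagram \eqref{eq:HI}, and combining the functoriality of $\tr$, $\mrm{sp}$ and $\eul$ under this factorization with the $\Sigma$-local invertibility of the intermediate traces (\propref{prop:Htpy-Inv}) and the multiplicativity of the Euler transformation in the distinguished triangle $\cN^{-1} \to \cN^0 \to \cN_{\sZ/\sX}$, one reduces the identity to the case of the zero section of an honest vector bundle. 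That zero section is a quasi-smooth closed immersion, so there the identity is exactly \propref{prop:ethnogeographically}.

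The main obstacle is the two compatibilities just invoked. The first is that deformation to the derived normal bundle really does intertwine $\mrm{unit} \circ \tr_i$ with its zero-section counterpart: here one must trace the unit of $(i^*, i_*)$ through the double deformation space and match it against the construction of $\eul_{\cN_{\sZ/\sX}}$ in \constrref{constr:postcarotid}. The second is that the dévissage of $\cN_{\sZ/\sX}$ along \eqref{eq:HI} is compatible with $\eul$ and $\tr$ on the $\Sigma$-localized categories — a careful but routine diagram chase assembling \propref{prop:Htpy-Inv}, \constrref{constr:postcarotid} and \constrref{constr:undeliberating}. Both are of the same nature as compatibilities already established in Sections~\ref{sec:locfrmlsm} and \ref{sec:catloc}; organizing them is the substantive work, and everything else is formal.
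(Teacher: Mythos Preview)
Your overall strategy matches the paper's exactly: repeat the proof of \thmref{thm:catlocsm} with a virtual self-intersection formula (the paper's \corref{cor:antilacrosse}) in place of \propref{prop:ethnogeographically}. Where you diverge is in how you prove that formula. You propose to first reduce to the zero section $0 : \sZ \to N_{\sZ/\sX}$ via a deformation compatibility, and then to run a d\'evissage on $\cN_{\sZ/\sX}$ down to the locally free case. The paper's route is shorter: it proves a general functoriality of specialization under proper base change (\propref{prop:sp-counit}) and applies it to the self-intersection square of $i$ (with $q=i$, $i'=\id_\sZ$); this yields \corref{cor:sp-counit:Self-intersection}, which identifies the composite $i_!\pi_!\pi^*i^* \xrightarrow{\sp_i} \id \xrightarrow{\mrm{unit}} i_!i^*$ directly with the unit of $0$. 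Since by \constrref{constr:undeliberating} one has $\tr_i = \sp_i \circ \tr_\pi^{-1}$ and by \constrref{constr:postcarotid} one has $\eul_{\cN_{\sZ/\sX}} = (\mrm{unit~of~}0) \circ \tr_\pi^{-1}$, the self-intersection formula drops out immediately. In other words, the compatibility you invoke in your first step, stated precisely, already \emph{is} the whole proof: there is no separate ``zero-section case'' to establish, because $\eul$ is by definition built from the unit of $0$. Your d\'evissage via \eqref{eq:HI} and multiplicativity of $\eul$ is therefore unnecessary overhead; it would work with enough bookkeeping, but trades one clean functoriality statement for several compatibilities that each need checking.
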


  The proof of \thmref{thm:catloc} will require the following categorifications of Proposition~1.15 and Corollary~1.16 in \cite{Virloc}.
  A closed immersion $i : Z \to X$ is \emph{homotopically smooth} if its conormal complex $\cN_{Z/X} = \cL_{Z/X}[-1]$ is perfect; its derived normal bundle is $N_{Z/X} := \V_Z(\cN_{Z/X}) \to X$.

  \begin{prop}\label{prop:sp-counit}
    Suppose given a commutative square
    \[
      \begin{tikzcd}
        Z' \ar{r}{i'}\ar{d}{p}
        & X' \ar{d}{q}
        \\
        Z \ar{r}{i}
        & X
      \end{tikzcd}
    \]
    in $\Stk$ where $i$ and $i'$ are homotopically smooth closed immersions, $q$ is proper, and the square is cartesian on classical truncations.
    Denote by $\pi': N_{Z'/X'} \to Z'$, $\pi: N_{Z/X} \to Z$ the derived normal bundles to $i'$ and $i$, respectively, and by $N_\Delta : N_{Z'/X'} \to N_{Z/X}$ the induced (proper) morphism.
    Then the diagram
    \[\begin{tikzcd}
      i_!\pi_!\pi^*i^* \ar{rr}{\sp_i}\ar[swap]{d}{\mrm{unit}}
      & & \id \ar{d}{\mrm{unit}}
      \\
      i_!\pi_! N_{\Delta,!}N_\Delta^* \pi^*i^* \ar[equals]{r}
      & q_!i'_!\pi'_!\pi'^*i'^*q^* \ar{r}{\sp_{i'}}
      & q_!q^*,
    \end{tikzcd}\]
    where the lower isomorphism is by functoriality, commutes up to canonical homotopy.
  \end{prop}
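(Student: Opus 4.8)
The plan is to show that the deformation to the derived normal bundle out of which $\sp_i$ is built is functorial for the square in the statement, and then to assemble the asserted homotopy from the standard base-change compatibilities of the six operations; this is the sheaf-theoretic incarnation of the proof of \cite[Prop.~1.15]{Virloc}. Recall from \cite[Constr.~3.1]{KhanVirtual} (and \cite[Constr.~8.7]{Sixstack}) that $\sp_i$ is defined using the derived deformation space $D_{Z/X}$: a derived stack over $\A^1$ whose restriction over $\Gm \sub \A^1$ is $X \times \Gm$ and whose restriction over $0$ is the derived normal bundle $N_{Z/X}$, together with the closed immersion $\iota : N_{Z/X} \hook D_{Z/X}$, its complementary open immersion $j : X \times \Gm \hook D_{Z/X}$, and the structural morphism $g : D_{Z/X} \to X$ which restricts to the projection over $\Gm$ and to $i \circ \pi$ over $0$. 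Roughly, $\sp_i$ is obtained by transporting the localization triangle for $(\iota, j)$ along $g$ and using homotopy invariance along the $\A^1$-direction together with purity for the fibres of $D_{Z/X} \to \A^1$, which are virtual Cartier divisors with trivialised conormal bundle; equivalently it is adjoint, via $(i_!, i^!)$ and $(\pi_!, \pi^!)$, to the purity transformation of $i \circ \pi : N_{Z/X} \to X$.

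Because $q$ is proper and the square is cartesian on classical truncations, the construction $D_{(-)}$ is functorial: there is a canonical proper morphism $D_\Delta : D_{Z'/X'} \to D_{Z/X}$ over $\A^1$, satisfying $g \circ D_\Delta = q \circ g'$ and compatible with the closed immersions $\iota$ and the open immersions $j$, whose restriction over $\Gm$ is $q \times \id_\Gm$ and whose restriction over $0$ is the morphism $N_\Delta$ of the statement. The squares that $D_\Delta$ forms with $(\iota', \iota)$ and with $(j', j)$ are cartesian, being base changes along $\{0\} \hook \A^1$ and $\Gm \hook \A^1$. Properness of $D_\Delta$ is checked on classical truncations, where $D^{\cl}$ is the usual deformation to the normal cone and $D_\Delta^{\cl}$ is proper because $q^{\cl}$ is and the classical construction commutes with pullback along $X' \to X$.

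To produce the homotopy I would compare the two composites around the square of the statement term by term: both are obtained by applying the recipe of the first paragraph to $(i', \iota', j', g')$, resp.\ $(i, \iota, j, g)$, and transporting along $q$, resp.\ along $N_\Delta$, via $D_\Delta$. The two unit maps occurring in the diagram are exactly what one obtains by applying $(D_\Delta)_!$ to the localization triangle on $D_{Z'/X'}$ and comparing it, through proper base change along the cartesian squares above, with the localization triangle on $D_{Z/X}$: this comparison is a map of exact triangles whose components are the units of $q \times \id_\Gm$ and of $N_\Delta$, and these become the units of $q$ and of $N_\Delta$ appearing in the diagram after one pushes forward along $g$. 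The remaining compatibilities to stack are then: functoriality of the exchange transformations under pasting of squares, applied to the squares formed by $D_\Delta$ with the pieces of the deformation; compatibility of homotopy invariance with $*$-pullback and proper pushforward, both of which commute with the $\A^1$-contraction; and compatibility of the purity isomorphism with proper base change, which is part of the six-functor package of \cite{DegliseJinKhan, Sixstack}. Combining these yields the desired canonical homotopy.

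The step I expect to be the main obstacle is the last one: one must exhibit an actual homotopy, not merely check commutativity on objects, and this requires the compatibilities above to be arranged coherently and all at once. I would handle this by packaging the whole deformation into a single diagram of $\infty$-categories indexed by the stratification of $\A^1$ into $\{0\}$, $\Gm$ and the total space, carrying the full six-functor structure, so that the two composites become by construction the same morphism in a mapping object and the remaining coherence is supplied by the abstract base-change-versus-purity compatibilities of \cite{DegliseJinKhan} and \cite[\S 8]{Sixstack}. A subsidiary, bookkeeping-heavy point is the verification in the second paragraph that $D_\Delta$ is proper and that the relevant squares are cartesian — classical facts, but ones which must be phrased with care in the derived setting, where the ambient square is only assumed cartesian on classical truncations.
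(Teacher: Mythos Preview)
Your proposal is correct and follows essentially the same route as the paper: functoriality of the derived deformation to the normal bundle yields a proper morphism $D_\Delta$ fitting into a ladder of complementary closed/open immersions, and the asserted homotopy is obtained by comparing the two localization triangles via the unit of $D_\Delta$ and then unravelling the definition of $\sp$. The paper's proof is considerably terser than yours and does not linger on the coherence concerns you raise in your final paragraph; it simply writes down the map of localization triangles (with upper verticals the unit transformations) and asserts that the claim follows by unravelling the definition of the specialization transformation, so your worries about packaging everything into an $\infty$-categorical diagram are not treated as a genuine obstacle.
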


  \begin{proof}
    The assumptions imply that the induced morphism $D_\Delta : D_{Z'/X'} \to D_{Z/X}$ on the derived deformations to the normal bundle is proper.
    The commutative diagram of complementary closed/open immersions
    \begin{equation*}
      \begin{tikzcd}
        N_{Z'/X'} \ar{r}{u'}\ar{d}{N_\Delta}
        & D_{Z'/X'} \ar[leftarrow]{r}{v'}\ar{d}{D_\Delta}
        & X' \times \bG_m\ar{d}{q\times\id}
        \\
        N_{Z/X} \ar{r}{u}
        & D_{Z/X} \ar[leftarrow]{r}{v}
        & X \times \bG_m
      \end{tikzcd}
    \end{equation*}
    gives rise to the commutative diagram
    \[\begin{tikzcd}
      v_!v^* \ar{r}\ar{d}
      & \id \ar{r}\ar{d}
      & u_!u^*\ar{d}
      \\
      v_!(q\times\id)_!(q\times\id)^*v^*\ar{r}\ar[equals]{d}
      & D_{\Delta,!}D_\Delta^* \ar{r}\ar[equals]{d}
      & u_!N_{\Delta,!}N_\Delta^*u^* \ar[equals]{d}
      \\
      D_{\Delta,!}v'_!v'^*D_\Delta^* \ar{r}
      & D_{\Delta,!}D_\Delta^* \ar{r}
      & D_{\Delta,!}u'_!u'^*D_\Delta^*
    \end{tikzcd}\]
    where the rows are localization triangles and the upper vertical arrows are unit transformations.
    Now the claim follows by unravelling the definition of the specialization transformation.
  \end{proof}

  \begin{cor}\label{cor:sp-counit:Self-intersection}
    Let $i: Z \to X$ be a homotopically smooth closed immersion in $\Stk$.
    Then the diagram
    \[\begin{tikzcd}
      i_!\pi_!\pi^*i^* \ar{rr}{\mrm{unit}}\ar[equals]{d}
      & & i_!\pi_!0_!0^*\pi^*i^* \ar[equals]{d}
      \\
      i_!\pi_!\pi^*i^* \ar{r}{\sp_i}
      & \id \ar{r}{\mrm{unit}}
      & i_!i^*
    \end{tikzcd}\]
    commutes up to canonical homotopy, where $0 : X \to N_{Z/X}$ denotes the zero section to the derived normal bundle $\pi : N_{Z/X} \to X$.
  \end{cor}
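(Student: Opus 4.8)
The plan is to derive \corref{cor:sp-counit:Self-intersection} directly from \propref{prop:sp-counit}, exactly as \cite[Cor.~1.16]{Virloc} is deduced from \cite[Prop.~1.15]{Virloc}, by applying \propref{prop:sp-counit} to the ``degenerate'' square
\[
  \begin{tikzcd}
    Z \ar{r}{\id_Z}\ar[swap]{d}{\id_Z}
    & Z \ar{d}{i}
    \\
    Z \ar{r}{i}
    & X,
  \end{tikzcd}
\]
i.e., in the notation of \propref{prop:sp-counit} we put $Z' = X' = Z$, $i' = \id_Z$, $p = \id_Z$ and $q = i$. First I would check the hypotheses: $q = i$ is proper, being a closed immersion; $i' = \id_Z$ is a homotopically smooth closed immersion, with $\cN_{Z/Z} = 0$; and the square is cartesian on classical truncations because $i$ restricts to a monomorphism on classical truncations, so that the classical truncation of the derived fibre product $Z\times_X Z$ is identified with that of $Z$ via the diagonal, both projections becoming $\id_Z$.

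Next I would match the diagram produced by \propref{prop:sp-counit} with the one in the statement. The derived normal bundle of $i' = \id_Z$ is $N_{Z/Z} = \V_Z(0) \simeq Z$, with $\pi' = \id_Z$ and zero section $0' = \id_Z$. By naturality of the derived normal bundle in maps of pairs, the induced morphism $N_\Delta : N_{Z/Z} \to N_{Z/X}$ satisfies $\pi\circ N_\Delta = p\circ\pi' = \id_Z$ and $N_\Delta\circ 0' = 0\circ p$, whence $N_\Delta$ is precisely the zero section $0 : Z \to N_{Z/X}$; moreover the specialization transformation attached to the isomorphism $\id_Z$ is the identity, so $\sp_{i'} = \id$. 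Substituting $Z' = X' = Z$, $i' = p = \id_Z$, $q = i$, $N_\Delta = 0$ and $\sp_{i'} = \id$ into the commutative square of \propref{prop:sp-counit}, and unwinding the functoriality identification $i_!\pi_! N_{\Delta,!} N_\Delta^* \pi^* i^* \simeq q_! i'_!\pi'_!\pi'^* i'^* q^* = i_! i^*$, the composite ``$\sp_i$ then $\mrm{unit}$'' becomes the lower edge of the square of \corref{cor:sp-counit:Self-intersection}, while the composite ``$\mrm{unit}$ then functoriality'' becomes its upper edge; commutativity up to canonical homotopy is inherited from \propref{prop:sp-counit}.

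The only points that are not purely formal are two compatibilities of the deformation to the derived normal bundle used above: naturality of $N_\Delta$ and of $\sp$ in maps of pairs, and the degeneration in the case of an isomorphism (so that $N_{Z/Z}\simeq Z$ and $\sp_{\id_Z} = \id$, equivalently $D_{Z/Z}\simeq Z\times\bA^1$ with collapsing specialization triangle). I expect both to be immediate from the formalism of \cite{KhanVirtual,Sixstack}; if not already recorded there, I would isolate them as small lemmas, but I do not anticipate any genuine obstacle --- the entire content of \corref{cor:sp-counit:Self-intersection} is supplied by \propref{prop:sp-counit}.
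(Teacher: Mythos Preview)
Your proposal is correct and follows exactly the paper's own proof, which consists of the single observation that the self-intersection square (with $Z' = X' = Z$, $i' = p = \id_Z$, $q = i$) satisfies the hypotheses of \propref{prop:sp-counit}. Your additional unwinding of $N_\Delta = 0$ and $\sp_{\id_Z} = \id$ makes explicit what the paper leaves implicit, but there is no difference in approach.
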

  \begin{proof}
    The self-intersection square
    \[\begin{tikzcd}
      Z \ar[equals]{r}\ar[equals]{d}
      & Z \ar{d}{i}
      \\
      Z \ar{r}{i}
      & X
    \end{tikzcd}\]
    satisfies the assumptions of \propref{prop:sp-counit}.
  \end{proof}

  \begin{cor}[Categorical virtual self-intersection formula]\label{cor:antilacrosse}
    Let $X\in\Stk$ be quasi-compact with $T$-action, and let $Z \to X$ be a $T$-equivariant closed immersion such that the $T$-action on $Z$ is trivial.
    Denote by $i : \sZ \to \sX$ the induced morphism between $\sZ := [Z/T] \simeq Z \times BT$ and $\sX := [X/T]$.
    Assume that $\cL_{\sZ/\sX} \in \Dperf^{[-2,-1]}(\sZ)$, $\cL_{\sZ/\sX}^\fix \in \Dperf^{[-1,-1]}(\sZ)$, and $Z$ has finite stabilizers.
    Then the composite natural transformation
    \[
      i_*^\Sigma i^*_\Sigma \vb{-c}
      \xrightarrow{\tr_i^\Sigma} \id
      \xrightarrow{\mrm{unit}} i_*^\Sigma i^*_\Sigma
    \]
    of endofunctors of $\D(\sX)\inv$, is canonically identified with
    \[\eul_{\cN_{\sZ/\sX}}^\Sigma : i_*^\Sigma i^*_\Sigma \vb{-c} \to i_*^\Sigma i^*_\Sigma.\]
  \end{cor}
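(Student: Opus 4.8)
The plan is to deduce the identification by unwinding the definitions of $\tr_i^\Sigma$ (\constrref{constr:undeliberating}) and $\eul_{\cN_{\sZ/\sX}}^\Sigma$ (\constrref{constr:postcarotid}) and matching the resulting composites with the two legs of the ($\Sigma$-localized) square in \corref{cor:sp-counit:Self-intersection}. This runs parallel to the strategy behind \propref{prop:ethnogeographically} in the quasi-smooth case, but now passes through \corref{cor:sp-counit:Self-intersection} so as to accommodate the fact that $i$ need not be quasi-smooth.

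First I would record the inputs. The closed immersion $i : \sZ \to \sX$ is homotopically smooth, since $\cL_{\sZ/\sX}\in\Dperf^{[-2,-1]}(\sZ)$ forces $\cN_{\sZ/\sX}=\cL_{\sZ/\sX}[-1]$ to be perfect; its derived normal bundle is $\pi : N_{\sZ/\sX}=\V_\sZ(\cN_{\sZ/\sX})\to\sZ$, with zero section $0$, and $c$ is the virtual rank of $\cN_{\sZ/\sX}$. The hypotheses that $\cN_{\sZ/\sX}^\fix\in\Dperf^{[0,0]}(\sZ)$ and that $Z$ is quasi-compact with finite stabilizers are exactly those of \propref{prop:Htpy-Inv} for $\pi$, so the $\Sigma$-localized trace $\tr_\pi^\Sigma : \pi^\Sigma_!\pi^*_\Sigma\vb{c}\to\id$ is invertible; this is the common ingredient used to define both $\tr_i^\Sigma$ and $\eul_{\cN_{\sZ/\sX}}^\Sigma$.

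Next, \corref{cor:sp-counit:Self-intersection} applied to $i : \sZ \to \sX$ provides a canonical homotopy of natural transformations $i_*\pi_!\pi^*i^*\to i_*i^*$ (endofunctors of $\D(\sX)$; recall $i_!=i_*$) between the composite
\[
  i_*\pi_!\pi^*i^*\xrightarrow{\mrm{unit}}i_*\pi_!0_!0^*\pi^*i^*\simeq i_*i^*
\]
and the composite
\[
  i_*\pi_!\pi^*i^*\xrightarrow{\sp_i}\id\xrightarrow{\mrm{unit}}i_*i^*.
\]
Applying $L_\Sigma$ and invoking the canonical isomorphism \eqref{eq:discouragingness} of mapping $\infty$-groupoids of natural transformations, this descends to a homotopy between the corresponding composites of $\Sigma$-localized functors $i^\Sigma_*\pi^\Sigma_!\pi^*_\Sigma i^*_\Sigma\to i^\Sigma_*i^*_\Sigma$. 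I would then precompose both legs with the isomorphism $i^\Sigma_*(\tr_\pi^\Sigma)^{-1}i^*_\Sigma : i^\Sigma_*i^*_\Sigma\vb{-c}\to i^\Sigma_*\pi^\Sigma_!\pi^*_\Sigma i^*_\Sigma$ from the previous step. Reading off \constrref{constr:undeliberating}, the leg through $\sp_i$ is exactly the composite $i^\Sigma_*i^*_\Sigma\vb{-c}\xrightarrow{\tr_i^\Sigma}\id\xrightarrow{\mrm{unit}}i^\Sigma_*i^*_\Sigma$; reading off \constrref{constr:postcarotid}, the leg through the zero-section unit is exactly the whiskering $i^\Sigma_*\,\eul_{\cN_{\sZ/\sX}}^\Sigma\,i^*_\Sigma$, up to the cancelling Tate twists $\vb{c}$ and $\vb{-c}$ built into the two constructions. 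This yields the desired identification.

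The one point requiring care --- bookkeeping, not a genuine obstacle --- is the twist/shift accounting: one has to check that the occurrences of $(\tr_\pi^\Sigma)^{-1}$, $\vb{c}$ and $\vb{-c}$ in \constrref{constr:undeliberating} and \constrref{constr:postcarotid} assemble, after the reduction above, precisely into the two legs of the $\Sigma$-localized square of \corref{cor:sp-counit:Self-intersection}, and that transporting a homotopy along \eqref{eq:discouragingness} and then precomposing it with an invertible transformation is legitimate (which it is, as both operations take place at the level of mapping $\infty$-groupoids). All of the actual geometric content --- compatibility of the specialization map with the zero section and with the deformation to the normal bundle --- has already been absorbed into \propref{prop:sp-counit} and hence \corref{cor:sp-counit:Self-intersection}.
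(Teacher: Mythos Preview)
Your proposal is correct and follows essentially the same approach as the paper's proof: both apply \corref{cor:sp-counit:Self-intersection} to $i:\sZ\to\sX$, descend the resulting commutative square to the $\Sigma$-localized categories, precompose with the inverse of $\tr_\pi^\Sigma$, and then read off the two legs as $\tr_i^\Sigma$ followed by the unit (via \constrref{constr:undeliberating}) and as $\eul_{\cN_{\sZ/\sX}}^\Sigma$ (via \constrref{constr:postcarotid}), respectively. The paper simply packages this into a single diagram and a one-line justification, whereas you have spelled out the steps more explicitly.
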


  \begin{proof}
    Consider the following diagram of endofunctors of $\D(\sX)\inv$
    \[\begin{tikzcd}[column sep=40]
      i_* i^! \vb{-c} \ar[leftarrow]{r}{\tr_\pi}\ar[equal]{d}
      & i_*\pi_*\pi^!i^! \ar{rr}{\mrm{unit}}\ar[equal]{d}
      & & i_*\pi_*0_*0^!\pi^!i^!\ar[equal]{d}
      \\
      i_* i^! \vb{-c} \ar[leftarrow]{r}{\tr_\pi}
      & i_*\pi_*\pi^!i^! \ar{r}{\sp_{\sZ/\sX}}
      & \id \ar{r}{\mrm{unit}}
      & i_* i^!,
    \end{tikzcd}\]
    where we omit the decorations ``$\Sigma$''.
    The right-hand square commutes by \corref{cor:sp-counit:Self-intersection}.
    By construction of $\tr_i$ and $\eul_{\cN_{\sZ/\sX}}$, the commutativity of the outer composite rectangle yields the identification desired.
  \end{proof}

  \begin{proof}[Proof of \thmref{thm:catloc}]
    Repeat the proof of \thmref{thm:catlocsm}, substituting in \corref{cor:antilacrosse} for \propref{prop:ethnogeographically}.
  \end{proof}

\section{Cohomological localization and integration formulas}

  We now derive from \thmref{thm:catloc} the localization formula in cohomology and Borel--Moore homology.
  The latter recovers \cite[Cor.~5.27]{Virloc}.

  We begin by defining Euler classes and Gysin maps.
  Recall that for every $\sX$ over $BT$ we have a canonical functor \eqref{eq:bedull}
  \begin{equation}\label{eq:daggletail}
    F_\Sigma : \D(\sX)\inv \to \Mod_{\Ccoh(\sX)\per\inv}
  \end{equation}
  with the property that $F_\Sigma(L_\Sigma\cF) \simeq \Ccoh(\sX;\cF)\per\inv$ for any $\cF \in \D(\sX)$.
  Dually, for any fixed $\cG \in \D(\sX)$ there is a canonical functor
  \begin{equation}\label{eq:noseband}
    G_\Sigma^\cG : \D(\sX)\inv \to (\Mod_{\Ccoh(\sX)\per\inv})^\op
  \end{equation}
  with $G_\Sigma^\cG(L_\Sigma \cF) \simeq \Maps_{\D(\sX)}(\cF, \cG\per)\inv$.

  \begin{constr}[Euler class]
    Let $X \in \Stk$ be quasi-compact with finite stabilizers.
    Suppose given a perfect complex
    \[
      \cE \in \Dperf^{[-1,0]}(\sX)
      \quad \text{with}~\cE^\fix \in \Dperf^{[0,0]}(\sX)
    \]
    on $\sX := X \times BT$.
    The Euler transformation $\eul_\cE^\Sigma$ (\constrref{constr:postcarotid}) gives rise to a morphism
    \[
      \eul_\cE : L_\Sigma \un_\sX \to L_\Sigma \un_\sX\vb{r},
    \]
    where $r$ is the virtual rank of $\cE$.
    Applying $F_\Sigma$ yields a canonical morphism of the form
    \[
      \Ccoh(\sX;\un_\sX)\per\inv
      \to \Ccoh(\sX;\un_\sX)\per\inv
    \]
    which by $\Ccoh(\sX;\un_\sX)\per\inv$-linearity amounts to an element
    \begin{equation}
      e(\cE) \in \Ccoh_T(X)\vb{r}\inv \simeq \Ccoh(\sX)\vb{r}\inv,\
    \end{equation}
    we call the \emph{Euler class} (or \emph{top Chern class}).
    If $\cE^\fix \simeq 0$, then $\eul_\cE^\Sigma$ is invertible (\propref{prop:trophoderm}), hence $e(\cE) \in \Ccoh(\sX)\per\inv$ is invertible.
    By cap product, $e(\cE)$ gives rise for every $\Ccoh(\sX)\per$-module spectrum $M$ to a map
    \[
      e(\cE) : M\inv \to M\inv,
    \]
    invertible when $\cE^\fix \simeq 0$.
  \end{constr}


  \begin{constr}[Gysin maps]
    Let $X\in\Stk$ be quasi-compact with $T$-action, and let $Z \to X$ be a $T$-equivariant closed immersion where the $T$-action on $Z$ is trivial.
    Write $\sZ := [Z/T] \simeq Z \times BT$ and $\sX := [X/T]$.
    Assume that $\cL_{\sZ/\sX} \in \Dperf(\sZ)^{[-2,-1]}$, $\cL_{\sZ/\sX}^\fix \in \Dperf(\sZ)^{[-1,-1]}$, and $Z$ has finite stabilizers.
    Let $i : \sZ \to \sX$ denote the inclusion and let $c$ be the virtual rank of $\cL_{\sZ/\sX}[-1]$.
    \begin{defnlist}
      \item
      For any object $\cF \in \D(\sX)$, consider the morphism
      \[
        L_\Sigma i_! i^* \cF\vb{-c}
        \simeq i_!^\Sigma i^*_\Sigma L_\Sigma \cF\vb{-c}
        \xrightarrow{\tr_i^\Sigma} L_\Sigma\cF
      \]
      given by the trace transformation \eqref{eq:dietotherapy}.
      Applying $F_\Sigma$ \eqref{eq:daggletail} yields a canonical map of $\Ccoh(\sX)\per\inv$-module spectra
      \begin{equation}\label{eq:strengthful}
        \begin{multlined}
          i_! : \Ccoh(\sZ; i^*\cF)\per\inv
          \simeq \Ccoh(\sX; i_!i^*\cF)\per\inv\\
          \to \Ccoh(\sX; \cF)\per\inv.
        \end{multlined}
      \end{equation}
      We call \eqref{eq:strengthful} the \emph{Gysin push-forward} along $i$.

      \item
      The trace \eqref{eq:dietotherapy} gives rise to a morphism
      $$L_\Sigma i_!\un_\sZ\vb{-c} \to L_\Sigma \un_\sX.$$
      For any $\cF \in \D(\sX)$ we may apply the functor $G_\Sigma^{\cF}$ \eqref{eq:noseband} to get a canonical map of $\Ccoh(\sX)\per\inv$-modules
      \[
        \Maps_{\D(\sX)}(i_!\un_\sZ\vb{-c}, \cF\per)\inv
        \to \Maps_{\D(\sX)}(\un_\sX, \cF\per)\inv.
      \]
      By adjunction, this amounts to a canonical $\Ccoh(\sX)\per\inv$-module map
      \begin{equation}
        i^! : \Ccoh(\sX; \cF)\per\inv
        \to \Ccoh(\sZ; i^!\cF)\inv.
      \end{equation}
      We think of this as a Gysin pull-back in Borel--Moore homology (compare \remref{rem:keelhale}).
    \end{defnlist}
  \end{constr}

  \begin{cor}[Co/homological localization formula]\label{cor:truck}
    Let $X\in\Stk$ be quasi-compact with $T$-action, and let $Z \to X$ be a $T$-equivariant closed immersion such that $\St^T_X(x) \subsetneq T_x$ for every geometric point $x \in X \setminus Z$, and such that the $T$-action on $Z$ is trivial.
    Denote by $i : \sZ \to \sX$ the induced morphism between $\sZ := [Z/T] \simeq Z \times BT$ and $\sX := [X/T]$.
    Assume that $\cL_{\sZ/\sX} \in \Dperf^{[-2,-1]}(\sZ)$, $\cL_{\sZ/\sX}^\fix \simeq 0$, and $Z$ has finite stabilizers.
    Then for every $\cF \in \D(\sX)$ we have:
    \begin{thmlist}
      \item
      There is a canonical identification
      \[
        i_!\left(- \cap e(\cN_{\sZ/\sX})^{-1}\right)
        \simeq (i^*)^{-1}
      \]
      of maps $\Ccoh(\sZ; \cF)\per\inv \to \Ccoh(\sX; \cF)\per\inv$.

      \item
      There is a canonical identification
      \[
        i^!(-) \cap e(\cN_{\sZ/\sX})^{-1}
        \simeq (i_*)^{-1}
      \]
      of maps $\Ccoh(\sX; \cF)\per\inv \to \Ccoh(\sZ; i^!\cF)\per\inv$.
    \end{thmlist}
  \end{cor}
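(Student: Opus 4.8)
The plan is to deduce both identities by applying the two realization functors $F_\Sigma$ of \eqref{eq:daggletail} and $G_\Sigma^\cF$ of \eqref{eq:noseband} to the categorical virtual localization formula (\thmref{thm:catloc}), which already exhibits the natural transformation of endofunctors of $\D(\sX)\inv$
\[
  v\colon i_*^\Sigma i^*_\Sigma \xrightarrow{\eul^{-1}} i_*^\Sigma i^*_\Sigma\vb{-c} \xrightarrow{\tr_i^\Sigma} \id
\]
as a two-sided inverse of the unit $u\colon \id \to i_*^\Sigma i^*_\Sigma$. First I would record that under the standing hypothesis $\cL_{\sZ/\sX}^\fix \simeq 0$ the conormal complex $\cN_{\sZ/\sX} = \cL_{\sZ/\sX}[-1] \in \Dperf^{[-1,0]}(\sZ)$ has vanishing fixed part, so by \propref{prop:trophoderm} the Euler transformation $\eul_{\cN_{\sZ/\sX}}^\Sigma$ is invertible and the Euler class $e(\cN_{\sZ/\sX})$ is a unit in $\pi_0\Ccoh(\sZ)\per\inv$; thus $-\cap e(\cN_{\sZ/\sX})^{-1}$ is a well-defined automorphism of $\Ccoh(\sZ;i^*\cF)\per\inv$ and of $\Ccoh(\sZ;i^!\cF)\per\inv$, so the right-hand sides of (i) and (ii) make sense.

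For part (i), I would evaluate $u$ at $L_\Sigma\cF$ and apply the covariant functor $F_\Sigma$. Using $F_\Sigma(L_\Sigma\cG)\simeq\Ccoh(\sX;\cG)\per\inv$ together with the adjunction comparison $\Ccoh(\sX;i_*\cG)\per\inv \simeq \Ccoh(\sZ;\cG)\per\inv$ (which is compatible with $\Sigma$-localization because $i^*$ carries $S_\Sigma$ into $S_\Sigma$), this reproduces the pullback $i^*$ of \corref{cor:concoh}, \eqref{eq:anarchy1}, which is invertible there. Since $F_\Sigma$ is a functor, $(i^*)^{-1}$ is then $F_\Sigma$ applied to $v$: the factor $F_\Sigma(\eul^{-1})$ becomes, under the comparison and by the definition of the Euler class, cap product with $e(\cN_{\sZ/\sX})^{-1}$, while $F_\Sigma(\tr_i^\Sigma)$ is, by definition, the Gysin push-forward $i_!$ of \eqref{eq:strengthful}. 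Composing the two gives $(i^*)^{-1} = i_!\bigl(-\cap e(\cN_{\sZ/\sX})^{-1}\bigr)$, which is (i).

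For part (ii), I would run the same argument with the contravariant functor $G_\Sigma^\cF$, $G_\Sigma^\cF(L_\Sigma\cG) \simeq \Maps_{\D(\sX)}(\cG,\cF\per)\inv$, evaluating $u$ at $L_\Sigma\un_\sX$. Using the adjunctions $i^*_\Sigma \dashv i_*^\Sigma \dashv i^!_\Sigma$ with $i_*\simeq i_!$ to identify $\Maps_{\D(\sX)}(i_*\un_\sZ,\cF\per)\inv \simeq \Ccoh(\sZ;i^!\cF)\per\inv$, and the standard triple-adjunction compatibility between the unit of $(i^*,i_*)$ and the counit of $(i_*,i^!)$, one finds that $G_\Sigma^\cF(u)$ is the Borel--Moore Gysin push-forward $i_*$ of \corref{cor:concoh}, \eqref{eq:anarchy}. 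Hence $(i_*)^{-1} = G_\Sigma^\cF(v)$, which by contravariance of $G_\Sigma^\cF$ is $G_\Sigma^\cF(\tr_i^\Sigma)$ followed by $G_\Sigma^\cF(\eul^{-1})$; the former is by construction the Gysin pull-back $i^!$ and the latter is again cap product with $e(\cN_{\sZ/\sX})^{-1}$. This yields $(i_*)^{-1} = i^!(-)\cap e(\cN_{\sZ/\sX})^{-1}$, which is (ii).

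Everything beyond \thmref{thm:catloc} is bookkeeping, and the places I expect to need genuine care are: (a) checking that the comparisons $\Ccoh(\sX;i_*\cG)\per\inv \simeq \Ccoh(\sZ;\cG)\per\inv$ and $\Maps_{\D(\sX)}(i_*\un_\sZ,\cF\per)\inv \simeq \Ccoh(\sZ;i^!\cF)\per\inv$ are $\Ccoh(\sX)\per\inv$-linear and intertwine the operations as claimed; (b) verifying that the Euler transformation on $\D(\sZ)\inv$, transported to $\D(\sX)\inv$ through $i_*^\Sigma(-)i^*_\Sigma$ (resp.\ through the $i^!$-adjunction) and then realized, is exactly cap product with $e(\cN_{\sZ/\sX})^{\pm1}$; and (c) the triple-adjunction compatibility used in (ii) to recognize $G_\Sigma^\cF(u)$ as the counit-defined Borel--Moore Gysin map. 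The only real trap is circularity: $i_!$, $i^!$ and $e(\cN_{\sZ/\sX})$ were all defined through precisely these realization functors, so the required identifications should be read as unwindings of their definitions rather than as new computations.
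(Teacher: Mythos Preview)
Your proposal is correct and follows exactly the paper's approach: the paper's proof is the single line ``Follows immediately from \thmref{thm:catloc} by unravelling definitions,'' and what you have written is precisely that unravelling, carried out carefully via the realization functors $F_\Sigma$ and $G_\Sigma^\cF$. One small caution: you invoke a triple adjunction $i^*_\Sigma \dashv i_*^\Sigma \dashv i^!_\Sigma$ at the $\Sigma$-localized level, but the paper only constructs $i^!_\Sigma$ as a right adjoint when $i$ is smooth; however your actual argument only uses the unlocalized $(i_*,i^!)$ adjunction inside $\Maps_{\D(\sX)}(-,\cF\per)$ before localizing, which is fine, so this is only a matter of phrasing.
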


  \begin{proof}
    Follows immediately from \thmref{thm:catloc} by unravelling definitions.
  \end{proof}

  \begin{cor}[Integration formula]\label{cor:affirm}
    Let the notation be as in \corref{cor:truck}.
    Suppose that $\sX$ is proper representable over some $S \in \Stk$.
    Denote by $f : \sX \to S$ and $g : \sZ \to S$ the structural morphisms.
    Then for every $\cF \in \D(S)$ we have:
    \begin{thmlist}
      \item
      If $f : \sX \to S$ is quasi-smooth, then there is a canonical identification\footnote{%
        Here $i_!$ is the Gysin map of \eqref{eq:strengthful}.
        When $g$ is quasi-smooth, we can simplify $f_!i_! \simeq g_!$.
      }
      \[
        f_!(-) \simeq f_!i_!(i^*(-) \cap e(\cN_{\sZ/\sX})^{-1})
      \]
      of maps $\Ccoh(\sX; f^*\cF)\per\inv \to \Ccoh(S; \cF)\per\inv$.

      \item
      There is a canonical identification
      \[
        f_*(-) \simeq g_*(i^!(-) \cap e(\cN_{\sZ/\sX})^{-1})
      \]
      of maps
      \[
        \Ccoh(\sX; f^!\cF)\per\inv \to \Ccoh(S; \cF)\per\inv.
      \]
    \end{thmlist}
  \end{cor}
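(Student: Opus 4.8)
The plan is to deduce \corref{cor:affirm} directly from the co/homological localization formula (\corref{cor:truck}), by taking coefficients in $f^*\cF$, resp.\ $f^!\cF$, and post-composing with the structural pushforward along $f$. The only inputs beyond \corref{cor:truck} are the bookkeeping facts that the $\Sigma$-localized Gysin and proper pushforwards along $i$ and $f$ are maps of $\Ccoh(S)\per\inv$-module spectra (immediate from the projection formula) and compose correctly: since $g = f\circ i$ one has $i^*f^*\simeq g^*$, $i^!f^!\simeq g^!$, and correspondingly $f_!\circ i_!\simeq g_!$ on localized cohomology and $f_*\circ i_*\simeq g_*$ on localized Borel--Moore homology, the latter because the counits $i_!i^!\to\id$ and $f_!f^!\to\id$ compose to $g_!g^!\to\id$ (using $f_*\simeq f_!$, $i_*\simeq i_!$ for the proper morphisms $f$ and $i$).

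For part (i): apply \corref{cor:truck}(i) with $\cF$ replaced by $f^*\cF\in\D(\sX)$. Using $i^*f^*\cF\simeq g^*\cF$, this says that
\[
  i^* : \Ccoh(\sX; f^*\cF)\per\inv \to \Ccoh(\sZ; g^*\cF)\per\inv
\]
is an equivalence (the hypothesis $\cL_{\sZ/\sX}^\fix\simeq 0$ being what makes $e(\cN_{\sZ/\sX})$ invertible after $\Sigma$-localization, by \propref{prop:trophoderm}) with inverse $i_!\bigl((-)\cap e(\cN_{\sZ/\sX})^{-1}\bigr)$. Since $f$ is proper and quasi-smooth, purity gives $f^!\cF\simeq f^*\cF\vb{d}$, so the Gysin pushforward $f_!:\Ccoh(\sX; f^*\cF)\per\inv\to\Ccoh(S;\cF)\per\inv$ is defined; composing it with the identity $(i^*)^{-1}\circ i^* = \id$ and substituting the explicit form of $(i^*)^{-1}$ yields the canonical identification
\[
  f_!(-) = f_!\bigl((i^*)^{-1}(i^*(-))\bigr) = f_!i_!\bigl(i^*(-)\cap e(\cN_{\sZ/\sX})^{-1}\bigr),
\]
which is the claim. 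When $g$ is moreover quasi-smooth, $f_!i_!\simeq g_!$ by functoriality of Gysin pushforward along $g=f\circ i$, giving the simplification noted in the footnote.

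For part (ii): apply \corref{cor:truck}(ii) with $\cF$ replaced by $f^!\cF\in\D(\sX)$. Using $i^!f^!\cF\simeq g^!\cF$, it says that
\[
  i_* : \Ccoh(\sZ; g^!\cF)\per\inv \to \Ccoh(\sX; f^!\cF)\per\inv
\]
is an equivalence with inverse $i^!(-)\cap e(\cN_{\sZ/\sX})^{-1}$. Here $f_*:\Ccoh(\sX; f^!\cF)\per\inv\to\Ccoh(S;\cF)\per\inv$ is defined using only properness (and representability) of $f$, and $f_*\circ i_*\simeq g_*$ as above; composing $f_*$ with $i_*\circ(i_*)^{-1}=\id$ gives
\[
  f_*(-) = f_*\bigl(i_*\bigl((i_*)^{-1}(-)\bigr)\bigr) = g_*\bigl(i^!(-)\cap e(\cN_{\sZ/\sX})^{-1}\bigr),
\]
as desired. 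Naturality in $\cF$ of all the identifications is clear since every ingredient ($i^*$, $i^!$, $i_!$, $i_*$, the Euler class, the counits, $f_!$, $f_*$) is canonical and \corref{cor:truck} is canonical.

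There is no serious obstacle: the mathematical content is entirely contained in \corref{cor:truck} (which in turn rests on the categorical virtual localization formula \thmref{thm:catloc}). The only points requiring care are the $\Ccoh(S)\per\inv$-linearity and compositionality of the pushforwards on $\Ccoh(-)\per\inv$-modules, and the observation that part (i) needs ``$f$ quasi-smooth'' precisely so that purity $f^!\simeq f^*\vb{d}$ lets $f_!$ act on $\Ccoh(\sX; f^*\cF)\per\inv$, whereas in part (ii) the pushforward $f_*\simeq f_!$ already acts on $\Ccoh(\sX; f^!\cF)\per\inv$ with no quasi-smoothness hypothesis.
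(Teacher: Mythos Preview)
Your proposal is correct and follows essentially the same approach as the paper: both deduce \corref{cor:affirm} from \corref{cor:truck} by substituting $f^*\cF$ (resp.\ $f^!\cF$) for $\cF$ and post-composing with the pushforward along the proper representable morphism $f$, using $f_!\simeq f_*$. The paper's proof is terser, merely noting that the required pushforwards exist, while you spell out the bookkeeping explicitly; there is no substantive difference.
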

  \begin{proof}
    Since $f$ is proper representable, there is a canonical isomorphism $f_! \simeq f_*$.
    In particular, one has a cohomological Gysin push-forward $f_!$ (where $f$ is quasi-smooth) and Borel--Moore homological push-forwards $f_*$ and $g_*$, respectively.
  \end{proof}

  \begin{rem}
    For lisse-extended weaves satisfying proper descent (out of the examples listed in \sssecref{sssec:mashie}, this includes the Betti and étale weaves as well as motives with rational coefficients), \corref{cor:affirm} holds more generally without assuming that $\sX \to S$ is representable.
    This is because in that case we have the canonical isomorphism of functors $f_! \simeq f_*$ for any proper morphism $f$.
    See \cite[Thm.~A.7]{KhanVirtual}, \cite{KhanHomology}.
  \end{rem}



\bibliographystyle{halphanum}

Institute of Mathematics, Academia Sinica, 10617 Taipei, Taiwan

Statistics and Mathematics Unit, Indian Statistical Institute, Bangalore 560059, India
\end{document}